\numberwithin{equation}{section}
\newcommand{\BOX}{\ensuremath\Box}
\newtheorem{theorem}{Theorem }[section]
\newtheorem{corollary}[theorem]{Corollary}
\newtheorem{lemma}[theorem]{Lemma}
\newtheorem{proposition}[theorem]{Proposition}
{\theorembodyfont{\rmfamily}\newtheorem{remark}[theorem]{Remark}}
{\theorembodyfont{\rmfamily}}
\newcommand{\Z}{\mathbb{Z}}
\newcommand{\R}{\mathbb{R}}
\newcommand{\C}{\mathbb{C}}
\newcommand{\T}{\mathbb{T}}
\newcommand{\dd}{{\rm d}}
\newcommand{\opdiv}{\operatorname{div}}
\newcommand{\oprot}{\operatorname{rot}}
\newcommand{\opM}{\operatorname{M}}
\newcommand{\opX}{\operatorname{X}}
\newcommand{\opFM}{\operatorname{FM}}
\newcommand{\opFX}{\operatorname{FX}}
\newcommand{\opBUC}{\operatorname{BUC}}
\newcommand{\iu}{\mathrm{i}\mkern1mu}
\DeclareMathOperator*{\esssup}{ess\,sup}
\def\XXint#1#2#3{{\setbox0=\hbox{$#1{#2#3}{\int}$}
		\vcenter{\hbox{$#2#3$}}\kern-.5\wd0}}
\newenvironment{proof}{{\vskip\baselineskip\noindent\textbf{Proof:}}}%
{\hspace*{.1pt}\hspace*{\fill}\BOX\vskip\baselineskip}
\newenvironment{proofx}[1]%
{\vskip\baselineskip\noindent\textbf{Proof of {#1}:}}%
{\hspace*{.1pt}\hspace*{\fill}\BOX\vskip\baselineskip}
{\vskip\baselineskip\noindent\textbf{Proof of Theorem \protect\ref{#1}:}}%
{\hspace*{.1pt}\hspace*{\fill}\BOX\vskip\baselineskip}
{\vskip\baselineskip\noindent\textbf{Proof of Theorems \protect\ref{#1} --
		\protect\ref{#2}:}}%
{\hspace*{.1pt}\hspace*{\fill}\BOX\vskip\baselineskip}
\begin{document}

\title{
Axisymmetric steady Navier-Stokes flows under suction
}

\author{Mitsuo Higaki \\
Department of Mathematics, Graduate School of Science, Kobe University, \\
1-1 Rokkodai, Nada-ku, Kobe 657-8501, Japan \\
E-mail: higaki@math.kobe-u.ac.jp}
\date{}

\maketitle

\noindent {\bf Abstract.}\ 
We prove the existence of solutions for the axisymmetric steady Navier-Stokes system around an infinite cylinder under external forces. The solutions are constructed to be decaying at the horizontal infinity, despite an analogue of the Stokes paradox for the linearized system, and having neither periodicity nor decay in the vertical direction. The proof is based on perturbation of the nonlinear system around a suction flow. The class of functions in this paper, which is a subspace of the space of Fourier transformed vector finite Radon measures, is inspired by Giga-Saal (2013) treating rotating boundary layers.

\tableofcontents

\section{Introduction}
\label{sec.intro}

We consider the steady Navier-Stokes system in $\Omega$ 
\begin{equation}\tag{NS}\label{intro.eq.NS}
\left\{
\begin{array}{ll}
-\Delta u + \nabla p 
= -u\cdot\nabla u + f&\mbox{in}\ \Omega \\
\opdiv u = 0&\mbox{in}\ \Omega \\
u= b &\mbox{on}\ \partial\Omega \\
u(x,z)\to0&\mbox{as}\ |x|\to\infty. 
\end{array}\right.
\end{equation}
Here $\Omega=D\times\R$ and $D=\{x=(x^1,x^2)\in\R^2~|~|x|>1\}$ denotes a two-dimensional exterior disk. Thus $\Omega$ represents the domain exterior to an infinite straight cylinder. The unknowns are the three-dimensional velocity field $u=(u^1,u^2,u^z)$ and the pressure field $p$. The given data are the external force $f$ and the boundary condition $b$ on $\partial\Omega$. We have used standard notation for derivatives: 
$\partial_j = \partial/\partial x^j$, 
$\partial_z = \partial/\partial z$, 
$\Delta = \sum_{j=1}^2 \partial^2_j + \partial_z^2$, 
$\nabla = (\partial_1, \partial_2, \partial_z)$, 
$u\cdot \nabla u = (\sum_{j=1}^2 u^j \partial_j + u^z\partial_z) u$ 
and 
$\opdiv u = \sum_{j=1}^2 \partial_j u^j + \partial_z u^z$.

In this paper, we discuss the existence of solutions for the system \eqref{intro.eq.NS}. A notable property of \eqref{intro.eq.NS} is that, for given data of a specific form, it is reduced to the Navier-Stokes system in the two-dimensional exterior domain $D$. Indeed, if the data $f,b$ are independent of the variable $z$ and $f^z=b^z=0$, the system \eqref{intro.eq.NS} is equivalent with 
\begin{equation}\tag{NS$_{\rm 2d}$}\label{intro.eq.NS.2d}
\left\{
\begin{array}{ll}
-\Delta_{\rm 2d} w + \nabla_{\rm 2d} s 
= -w\cdot\nabla_{\rm 2d} w + (f_1,f_2)&\mbox{in}\ D \\
\opdiv_{\rm 2d} w = 0&\mbox{in}\ D \\
w = (b_1,b_2) &\mbox{on}\ \partial D \\
w(x)\to0&\mbox{as}\ |x|\to\infty, 
\end{array}\right.
\end{equation}
of which the unknowns are $w=(w_1,w_2)$ and $s$. The symbols with the subscript ``{\rm 2d}" refer to the two-dimensional operators on $D$ corresponding to the three-dimensional ones on $\Omega$. It is well known that, for general data $f,b,D$, the existence and uniqueness of solutions for \eqref{intro.eq.NS.2d} is widely open. There are difficulties due to lack of embedding in two-dimensional unbounded domains and to the famous Stokes paradox \cite{ChaFin1961,KozSoh1992,Gal04,Rus2010,Galbook2011} caused by logarithmic spatial growth of the fundamental solution of the linearized system. We refer to a recent survey \cite{KorRen2023} for an overview of the literature and challenging questions left open.

These aspects are deeply related to controlling the behavior of weak solutions far away from the boundaries, and hence, the zero condition at spatial infinity in \eqref{intro.eq.NS.2d} requires careful handling. There are two methods that guarantee the existence of solutions. The one is to impose symmetry on given data; see \cite{Gal04,Rus09,Yam11,Galbook2011,PilRus12,Yam16,Yam18}
for the existence of solutions and \cite{Nak15,Yam18} for the uniqueness. The other is to perturb the system. Taking a $(-1)$–homogeneous smooth solution of the Navier-Stokes system in $\R^2\setminus\{0\}$, we perturb \eqref{intro.eq.NS.2d} around it for a suitable boundary condition $(b_1,b_2)$ to obtain a new nonlinear problem. As a prototypical example, denoting by $(r,\theta)$ the polar coordinates on $D$
\begin{align}\label{intro.plr.coord.}
\begin{split}
&x_1 = r\cos \theta, 
\quad
x_2 = r\sin \theta, 
\quad 
r= |x| \ge 1, \quad \theta\in [0,2\pi), 
\end{split}
\end{align}
we may take the following exact solution, which is also one of the Hamel solutions \cite{Ham1917}: 
$$
{\mathcal V}_{\alpha,\gamma}(r,\theta) 
= 
\frac{\alpha}{r} (-\sin\theta, \cos\theta)
- \frac{\gamma}{r} (\cos\theta, \sin\theta), 
\quad 
\gamma,\alpha\in\R.
$$
This is a linear combination of a rotating flow and a flow carrying flux $-2\pi\gamma$; see \cite{Sve2011,GuiWit2015} for the characterization of the Hamel solutions as solutions to the Navier-Stokes system in $\R^2\setminus\{0\}$. The idea of the perturbation method is to use scalar transport by ${\mathcal V}_{\alpha,\gamma}$, whose effect is expected to localize fluid flow and to improve spatial decay of vorticity. Once such an improved decay is obtained, we associate with it the velocity field by using the streamfunction. The known results along this approach are summarized as follows.
\begin{itemize}
\item
The case with $\gamma=0$ is considered in Hillairet-Wittwer \cite{HilWit2013}. The transport by
\begin{align}\label{intro.trans.rot}
\begin{split}
{\mathcal V}_{\alpha,0} \cdot \nabla_{\rm 2d} \omega 
= \frac{\alpha}{r^2} \partial_\theta \omega
\end{split}
\end{align}
improves spatial decay of solutions to the linearized problem. Moreover, when $|\alpha|$ is sufficiently large, more precisely, when $|\alpha|>\sqrt{48}$, the existence of solutions is proved in the form of $w = {\mathcal V}_{\alpha,0} + o(r^{-1})$ as $r\to\infty$. These solutions are driven by Dirichlet boundary data on which no symmetry is imposed. This highlights the difference between \cite{HilWit2013} and the works above under symmetry \cite{Gal04,Rus09,Yam11,Galbook2011,PilRus12,Yam16,Yam18}. We refer to \cite{MaeTsu2023} for a related result in the two-dimensional whole space $\R^2$.

\item
The case with $\gamma\neq0$ is considered in \cite{Hig2023}. The transport by
\begin{align}\label{intro.trans.suc}
\begin{split}
{\mathcal V}_{\alpha,\gamma}\cdot \nabla_{\rm 2d} \omega 
= \frac{\alpha}{r^2} \partial_\theta \omega 
- \frac{\gamma}{r} \partial_r \omega 
\end{split}
\end{align}
improves the spatial decay as in \cite{HilWit2013}. The existence of solutions is also obtained if $\gamma$ is sufficiently large, more precisely, if $\gamma>2$ in the class $w = {\mathcal V}_{\alpha,\gamma} + o(r^{-1})$ as $r\to\infty$. Unlike \cite{HilWit2013}, however, the existence is verified for all $\alpha\in\R$. Therefore, the existence essentially relies on the presence of a pure suction flow ${\mathcal V}_{0,\gamma}$. This result is extended in \cite{HigHor2023} to the system \eqref{intro.eq.NS} under the assumption that all data are independent of the variable $z$, which makes the situation similar to that in \cite{Hig2023}.
\end{itemize}

The aim of this paper is to provide an existence theorem of axisymmetric solutions for \eqref{intro.eq.NS} by developing the above perturbation method. Recall that a vector field $u$ on $\Omega$
$$
u
= u^r(r,\theta,z) {\bf e}_r 
+ u^\theta(r,\theta,z) {\bf e}_\theta 
+ u^z(r,\theta,z) {\bf e}_z,
$$
where $(r,\theta,z)$ is the cylindrical coordinates on $\Omega=D\times\R$ induced by \eqref{intro.plr.coord.} and ${\bf e}_r, {\bf e}_\theta, {\bf e}_z$ are respectively the unit vectors in the radial, toroidal, and vertical directions:
$$
{\bf e}_r = (\cos\theta, \sin\theta, 0), 
\quad 
{\bf e}_\theta = (-\sin\theta, \cos\theta, 0), 
\quad 
{\bf e}_z = (0, 0, 1), 
$$
is said to be axisymmetric if $u^r, u^\theta, u^z$ are independent of $\theta$. The corresponding unsteady system of \eqref{intro.eq.NS} has been widely studied in the whole space $\R^3$. In the absence of swirl, namely when $u^\theta=0$, the global well-posedness is established by Lady\v{z}enskaja \cite{Lad1968} and Ukhovksii-Yudovich \cite{UkhYud1968}. The asymptotic behavior of solutions in large time is obtained by Gallay-\v{S}ver\'{a}k \cite{GalSve2015}. The global well-posedness in the presence of swirl is an open question and there are extensive works on the regularity of solutions; see for instance \cite{CSYT2008,CSTY2009,KNSS2009,LeiZha2011} and \cite{LiuZha2018,Liu2023} for recent results on the global solutions. The steady system \eqref{intro.eq.NS} has been studied in various domains in connection to Liouville-type theorems; see for instance \cite{CSYT2008,KNSS2009,LeiZha2011,KPR2015,Ser2016,CPZZ2020,Tsa2021,KTW2022,KTW2023}. Among others, Kozono-Terasawa-Wakasugi \cite{KTW2023} studies Liouville-type theorems in $\Omega=D\times\R$ assuming periodicity in the vertical direction.

In contrast, the existence of solutions for \eqref{intro.eq.NS} is less investigated; see \cite{Wan1974,Oka1995} and the references therein for the systems in which the zero condition at horizontal infinity is replaced by the boundary conditions with growth in $(r,z)$ at spatial infinity. There is an inherent difficulty analogous to the Stokes paradox \cite{ChaFin1961,KozSoh1992,Gal04,Rus2010,Galbook2011}. Indeed, for given $f,b$ independent of $z$ and a solution $u$ of \eqref{intro.eq.NS}, the component $u^z$ must be subject to 
$$
-\Delta_{{\rm 2d}} u^z
= -\Delta u^z
= -(u\cdot\nabla u)^z + f^z.
$$
Thus the two-dimensional Laplacian $\Delta_{{\rm 2d}}$ appears, whose fundamental solution has logarithmic growth in space. This cannot be avoided also in an axisymmetric setting. Therefore, even if the given data $f,b$ have specific forms, the solvability of \eqref{intro.eq.NS} is a subtle issue.

To overcome this difficulty, we adapt the method in \cite{HilWit2013,Hig2023}. However, since the interaction in fluid is more complicated in three-dimensional cases, it is completely nontrivial whether the spatial decay of solutions can be improved by the perturbation. Also, in axisymmetric settings, such improvement cannot be expected from rotating flows around the $z$-axis, as suggested by \eqref{intro.trans.rot}. We thus take a pure suction flow towards the wall $\partial\Omega$
\begin{align}\label{def.V}
\begin{split}
V(r,\theta,z)
=V(r,\theta)
=-\frac{\gamma}{r} {\bf e}_r, 
\quad 
\gamma>0
\end{split}
\end{align}
and perturb around $V$ the Navier-Stokes system \eqref{intro.eq.NS} with the boundary condition 
$$
b = (-\gamma x,0) = -\gamma {\bf e}_r. 
$$
Assuming the form of solutions $u=v+V$ and using the formula
$$
v\cdot\nabla V + V\cdot\nabla v 
= -V\times\oprot v
+ \nabla\Big(
\frac{|v+V|^2-|v|^2-|V|^2}2
\Big), 
$$
where $\times$ denotes the cross product in $\R^3$, we derive the nonlinear problem to be solved 
\begin{equation}\label{intro.eq.PNP}
\left\{
\begin{array}{ll}
-\Delta v - V\times\oprot v + \nabla q 
= -v\cdot\nabla v + f
&\mbox{in}\ \Omega \\
\opdiv v = 0&\mbox{in}\ \Omega \\
v= 0 &\mbox{on}\ \partial\Omega \\
v(x,z)\to0&\mbox{as}\ |x|\to\infty. 
\end{array}\right.
\end{equation}
By the formulas in Subsection \ref{subsec.op}, we write \eqref{intro.eq.PNP} in the cylindrical coordinates as
\begin{equation}\tag{NP}\label{intro.eq.NP}
\left\{
\begin{array}{ll}
-\Big(
\Delta - \dfrac{1}{r^2} 
\Big) v^r 
+ \partial_r q 
= 
-v\cdot\nabla v^r
+ \dfrac{(v^\theta)^2}{r}
+ f^r
&\mbox{in}\ (1,\infty)\times\R \\ [10pt]
-\Big(\Delta
+ \dfrac{\gamma}{r} \partial_r
- \dfrac{1-\gamma}{r^2}
\Big) v^\theta 
=
-v\cdot\nabla v^\theta
- \dfrac{v^r v^\theta}{r}
+ f^\theta
&\mbox{in}\ (1,\infty)\times\R \\ [10pt]
-\Delta v^z 
+ \dfrac{\gamma}{r} (\partial_z v^r - \partial_r v^z)
+ \partial_z q
=
-v\cdot\nabla v^z 
+ f^z
&\mbox{in}\ (1,\infty)\times\R \\ [10pt]
\dfrac{1}{r} \partial_r (rv^r) 
+ \partial_z v^z 
= 0
&\mbox{in}\ (1,\infty)\times\R \\ [10pt]
(v^r,v^\theta,v^z) = (0,0,0)
&\mbox{on}\ \{1\}\times\R \\ [5pt]
(v^r,v^\theta,v^z) \to (0,0,0)
&\mbox{as}\ r\to\infty,
\end{array}\right.
\end{equation}
where $\Delta=\partial_r^2 + \frac{1}{r} \partial_r + \partial_z^2$ and $v\cdot\nabla=v^r\partial_r+v^z\partial_z$.

Notice that the problem \eqref{intro.eq.NP} is imposed in the vertically unbounded domain $\Omega\simeq(1,\infty)\times\R$, whereas the two-dimensional one \eqref{intro.eq.NS.2d} is in $D\simeq(1,\infty)\times\T$. Given the development of the results \cite{HilWit2013,Hig2023}, it is meaningful to study \eqref{intro.eq.NP} for external forces $f$ that are neither periodic nor decaying in $\R$. The function space to which $f$ belongs is desirable to include periodic functions in the vertical direction. Thus, with reference to Giga-Saal \cite{GigSaa2013} treating rotating boundary layers, we work in a space of Fourier transformed finite vector Radon measures $\opFM$. In such a space, a function $f(r,z)$ on $(1,\infty)\times\R$ periodic in $z$ can be viewed as the Fourier transformation of a sum of Dirac measures, with $r$-variable coefficients, on $\R$ supported at integer points. However, the proofs require some technical advancements, since the results \cite{HilWit2013,Hig2023} make use of mode-by-mode computation by the Fourier series expansion on $\T$. It is emphasized that lack of spatial decay along boundaries in fluid flow is a natural concept in the analysis of boundary layers; see \cite{APV1998,JagMik2001,BasGer2008,GerMas2010,HigZhu2023} for fluids over rough boundaries and \cite{Ger2003,GIMMS2007,GigSaa2013,FisSaa2013,DalPra2014,GigSaa2015,DalGer2017} for rotating fluids.

To state the main result, we introduce function spaces. More details are summarized in Subsection \ref{subsec.func.sp}. For $\rho\ge0$, we define $L^\infty$ spaces on $(1,\infty)$ with weight
\begin{align}\label{intro.def.L}
\begin{split}
L^\infty_\rho = \{ f\in L^\infty(1,\infty)~|~ 
\|f\|_{L^\infty_\rho} 
<\infty \}, \qquad
\|f\|_{L^\infty_\rho} := \esssup_{r\in(1,\infty)}\, r^\rho |f(r)|. 
\end{split}
\end{align}
Let $L^1(\R,L^\infty_\rho)$ denote the $L^\infty_\rho$-valued Lebesgue space with respect to the Lebesgue measure $\Lambda$, and $l^1(\Z,L^\infty_{\rho})$ the space of $L^\infty_\rho$-valued absolutely summable sequences. Denoting by $\delta_p$ the Dirac measure on $\R$ supported at the point $p$, we define the space
\begin{align}\label{intro.eq.X}
\begin{split}
\opX(\R,L^\infty_{\rho})
= \bigg\{
f=g \dd \Lambda
+ \sum_{m\in\Z} a_m \delta_m
~\bigg|~
g\in L^1(\R,L^\infty_{\rho}), 
\mkern9mu 
\{a_m\}\in l^1(\Z,L^\infty_{\rho})
\bigg\}
\end{split}
\end{align}
consisting of $L^\infty_\rho$-valued finite Borel measures on $\R$, equipped with the norm 
\begin{align}\label{intro.eq.X.norm}
\begin{split}
\|f\|_{\opX(\R,L^\infty_{\rho})}
:=
\|g\|_{L^1(\R,L^\infty_{\rho})} 
+ \sum_{m\in\Z} \|a_m\|_{L^\infty_{\rho}}. 
\end{split}
\end{align}
It is a closed subspace of the space of $L^\infty_\rho$-valued finite Radon measures $\opM(\R,L^\infty_{\rho})$ on $\R$. Denoting by $\hat{f}$ the Fourier transform of $f$, we define the space
\begin{align}\label{intro.eq.FX}
\begin{split}
\opFX(\R,L^\infty_{\rho})
= \{\hat{f}~|~f\in \opX(\R,L^\infty_{\rho})\}
\end{split}
\end{align}
equipped with the norm $\|\hat{f}\|_{\opFX(\R,L^\infty_{\rho})}=\|f\|_{\opX(\R,L^\infty_{\rho})}$. 
It is a closed subspace of the space of $L^\infty_\rho$-valued bounded continuous functions $\opBUC(\R,L^\infty_{\rho})$. Finally, we define
\begin{align*}
\begin{split}
\opFX(\R,L^\infty_{\rho})^3
= \{
f = f^r {\bf e}_r + f^\theta {\bf e}_\theta + f^z {\bf e}_z
~|~
f^r, f^\theta, f^z\in \opFX(\R,L^\infty_{\rho})
\}. 
\end{split}
\end{align*}
Notice that any element of $\opFX(\R,L^\infty_{\rho})^3$ is an axisymmetric vector field on $\Omega$.

The main result in this paper is the existence of solutions of \eqref{intro.eq.NS}. 
%
\begin{theorem}\label{thm.main}
Let $\gamma>2$ and $2<\rho<3$ satisfy $\rho\le\gamma$. If $f\in \opFX(\R,L^\infty_{\rho+1})^3$ 
and $\|f\|_{\opFX(L^\infty_{\rho+1})}$ is sufficiently small depending on $\gamma,\rho$, then there is a unique weak solution $v\in \opFX(\R,L^\infty_{\rho-1})^3$ of \eqref{intro.eq.NP}. Consequently, there is an axisymmetric weak solution $u$ of \eqref{intro.eq.NS} with $b=-\gamma {\bf e}_r$ unique in a suitable class satisfying
\begin{align*}
u(r,\theta,z) 
= 
- \frac{\gamma}{r} {\bf e}_r
+ O(r^{-\rho+1})
\quad 
\mbox{as}\ \, r\to\infty.
\end{align*}
\end{theorem}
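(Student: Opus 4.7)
The plan is to construct $v$ perturbatively: first solve the linearization of \eqref{intro.eq.NP} with the quadratic term treated as a prescribed source, then close the full problem by a Banach contraction on a small ball of $\opFX(\R,L^\infty_{\rho-1})^3$. Once such a $v$ is obtained, $u=v+V$ with $V=-\gamma r^{-1}{\bf e}_r$ is the desired solution of \eqref{intro.eq.NS}. The architecture follows \cite{HilWit2013,Hig2023}, but their $\T$-periodic Fourier-series mode expansion in the vertical variable is replaced by the Radon-measure-based Fourier analysis on $\R$ of Giga-Saal type \cite{GigSaa2013}, which is exactly what the $\opFX$ spaces encode.

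For the linear step I would take the Fourier transform in $z$. Any element of $\opFX(\R,L^\infty_\rho)$ is the Fourier image of a measure $g\,\dd\Lambda+\sum_m a_m\delta_m$ whose total variation is the $\opFX$-norm, so it suffices to establish frequency-uniform bounds: for every $\xi\in\R$, the resolvent of the linearized \eqref{intro.eq.NP} at vertical frequency $\xi$ should map $L^\infty_{\rho+1}(1,\infty)\to L^\infty_{\rho-1}(1,\infty)$ with a constant independent of $\xi$. At fixed $\xi$, the toroidal equation for $v^\theta$ decouples and is a scalar elliptic equation on the half-line with the suction drift $-\gamma r^{-1}\partial_r$ and a Helmholtz shift $\xi^2$; for $(v^r,v^z,q)$ the axisymmetric Stokes streamfunction reduces the remaining system to a scalar equation of the same type. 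The hypothesis $\gamma>2$ combined with $2<\rho<3$ and $\rho\le\gamma$ is exactly what yields the $L^\infty_{\rho+1}\to L^\infty_{\rho-1}$ bound at $\xi=0$ via the suction-induced decay improvement already exploited in \cite{Hig2023}; the additional $\xi^2$ only helps spatial decay when $|\xi|\gtrsim 1$, so the uniform-in-$\xi$ bound should follow by a perturbation around the $\xi=0$ analysis in the low-frequency regime and a direct pointwise estimate in the high-frequency regime.

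For the nonlinear step I use that convolution of finite measures is contractive in total variation, so $\opFX$ is a Banach algebra in $z$, and in weighted form
\begin{equation*}
\|fg\|_{\opFX(\R,L^\infty_{\rho_1+\rho_2})}
\le
C\,\|f\|_{\opFX(\R,L^\infty_{\rho_1})}\,\|g\|_{\opFX(\R,L^\infty_{\rho_2})}.
\end{equation*}
Since $(\rho-1)+\rho=2\rho-1\ge\rho+1$ exactly when $\rho\ge 2$, the hypothesis $\rho>2$ allows the quadratic term $v\cdot\nabla v$ to be placed into the source space $\opFX(\R,L^\infty_{\rho+1})^3$, closing the contraction on the ball of radius $2C\|f\|_{\opFX(L^\infty_{\rho+1})}$ for $f$ sufficiently small. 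Uniqueness in the class and the asymptotic $u=V+O(r^{-\rho+1})$ then follow from the contraction itself and from the embedding $\opFX(\R,L^\infty_{\rho-1})\hookrightarrow\opBUC(\R,L^\infty_{\rho-1})$.

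The main obstacle is the vertical derivative in $v\cdot\nabla v$: on the measure side $\partial_z$ is multiplication by $\ii\xi$, which is not a bounded operator on $\opX$. To handle the problematic term $v^z\partial_z v$, I would (i) exploit the divergence-free identity $\partial_z v^z=-r^{-1}\partial_r(rv^r)$ to trade some of the vertical derivatives for radial ones (which only act on the inner $L^\infty_\rho$ variable and so are harmless for the algebra structure), and (ii) strengthen the linear theory to show that the resolvent, at frequency $\xi$, gains a factor $(1+|\xi|)^{-1}$ in the output norm; this is plausible because the $\xi^2$ Helmholtz term provides exactly such a gain at large $|\xi|$, and it ensures that $\xi\,\hat v(r,\xi)$ remains integrable in $\xi$ with values in a suitable weighted $L^\infty_r$. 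This derivative/weight bookkeeping, together with the uniform-in-$\xi$ resolvent analysis of the Stokes system with suction drift at nonzero vertical frequency, is where the bulk of the technical work of the proof should lie.
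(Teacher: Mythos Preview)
Your proposal is correct and follows the same architecture as the paper: linear estimate via frequency-by-frequency analysis of the Fourier-transformed system, Banach-algebra product estimate in $\opFX$ using $2\rho-1\ge\rho+1$, and contraction on a small ball. The paper's proof of the theorem itself is only a paragraph; the work is in the linear proposition (Proposition~\ref{prop.LP}).

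One organizational difference worth noting: what you flag as the ``main obstacle'' --- control of $\partial_z v$ --- is in the paper not an afterthought but built in from the start. The linear result is stated as
\[
\|(\partial_r,\partial_z)^j v\|_{\opFX(L^\infty_{\rho-1+j})}\lesssim\|f\|_{\opFX(L^\infty_{\rho+1})},\qquad j=0,1,
\]
and the fixed-point space $\mathcal X_\rho$ carries the norm $\|w\|_{\opFX(L^\infty_{\rho-1})}+\|(\partial_r,\partial_z)w\|_{\opFX(L^\infty_{\rho})}$. Your option~(ii) --- a $(1+|\xi|)^{-1}$ gain at high frequency --- is exactly what produces the $j=1$ case; your option~(i), trading $\partial_z v^z$ for radial derivatives via the divergence-free condition, is not used and not needed. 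Also, for the $(v^r,v^z)$ block the paper does not analyze a scalar streamfunction equation directly but instead passes to the vorticity $\omega=(\oprot v)^\theta$, solves its equation (which carries the suction drift $-\gamma r^{-1}\partial_r$), and then recovers $v$ by an axisymmetric Biot--Savart law in $\Omega$; the boundary condition $v^z|_{r=1}=0$ is enforced by choosing a free constant in $\hat\omega$ so that a certain integral $d[\hat\omega]$ vanishes. This two-step vorticity/Biot--Savart route, and the monotonicity coming from positivity of $K_\nu$ used to bound the normalizing constant, are the parts of the linear analysis your outline leaves implicit.
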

%
%
\begin{remark}\label{rem.thm.main}
\begin{enumerate}[(i)]
\item\label{item1.rem.thm.main}
To the best of our knowledge, the existence theorem in Theorem \ref{thm.main} is new even for vertically periodic data $f$. If such periodicity is assumed, the proofs in this paper are simplified considerably, since the Fourier series expansion is applicable.

\item\label{item2.rem.thm.main}
It is unclear whether the class of external forces $f$ in Theorem \ref{thm.main} can be extended to the entire $\opFM(\R,L^\infty_{\rho+1})^3$. This is related to the Radon-Nikod\'{y}m property \cite[Chapter $\mathrm{I}\hspace{-1.2pt}\mathrm{I}\hspace{-1.2pt}\mathrm{I}$, Definition 3]{DieUhlbook1977} of the space $L^\infty_{\rho+1}$. This point will be discussed in Remark \ref{rem.lem.X.FX}.

\item\label{item3.rem.thm.main}
In the presence of a rotating flow, namely, changing $V$ in \eqref{def.V} into 
\begin{align}\label{def.V.rot}
\begin{split}
V(r,\theta,z)
=V(r,\theta)
=\frac{\alpha}{r} {\bf e}_\theta
- \frac{\gamma}{r} {\bf e}_r, 
\quad 
\alpha\in\R, 
\mkern9mu
\gamma>0, 
\end{split}
\end{align}
one can ask whether an analogy of Theorem \ref{thm.main} holds. Nevertheless, we need smallness on $|\alpha|$ in this case to prove the existence of solutions by the method in this paper. This restriction is due to the interaction between the swirl $v^\theta$ and the rotating flow. Hence there is a clear difference between the existence theorem for \eqref{intro.eq.NS} and that for the two-dimensional system in \cite{Hig2023} in which no smallness on $|\alpha|$ is needed. The proof of the existence of solutions when $\alpha\neq0$ will be outlined in Appendix \ref{app.case.rot}.
\end{enumerate}
\end{remark}
%

The proof of Theorem \ref{thm.main} is based on a representation formula for the linearized problem of \eqref{intro.eq.NP}; see \eqref{eq.LP} in Section \ref{sec.LP}. The formula is derived through the axisymmetric Biot-Savart law in $\Omega$, which seems to be new in the literature. If appropriate estimates of \eqref{eq.LP} are obtained, it is routine to prove the existence of solutions for \eqref{intro.eq.NP} by the fixed point argument under smallness on data. However, the proof of the linear estimates performed in Section \ref{sec.LP} requires tedious computation and is the most demanding part of this paper.

It is worth noting that the difficulties in this part have similarity to those in estimating at large time the semigroup for the Stokes system in two-dimensional exterior domains \cite{KozOga1993,BorVar1993,MarSol1997,DanShi1999a,DanShi1999b} or for the perturbed system around Hamel solutions \cite{Mae2017a,Mae2017b,Hig2019,Hig2024}. Such a large-time estimate is deduced from the corresponding resolvent problem when the resolvent parameter $\lambda$ is close to the origin, which requires careful analysis due to the Stokes paradox. In our case, the counterpart of $\lambda$ is $|\zeta|^2$ where $\zeta$ is the Fourier variable of $z$ in \eqref{eq.LP}, and the difficulties lie in estimating \eqref{eq.LP} when $|\zeta|$ is small. To address this, we utilize {\it monotonicity} in the representation formula, which is not available for the resolvent problems. In particular, we essentially use the positivity of the modified Bessel function $K_\nu(|\zeta|r)$ with positive order $\nu$; see the proofs of Lemma \ref{lem.vel.L1} and Lemma \ref{lem.est.F} for example.

This paper is organized as follows. Section \ref{sec.prelim} summarizes the differential operators in the cylindrical coordinates and the function spaces used in this paper. In Sections \ref{sec.ABSL}, we introduce the axisymmetric Biot-Savart law and its applications. Section \ref{sec.LP} is devoted to the study of the linearized problem of \eqref{intro.eq.NS}, which is the core of this paper, and Section \ref{sec.pr.thm} to the proof of Theorem \ref{thm.main}. Appendix \ref{app.lem} summarizes the auxiliary lemmas needed in Sections \ref{sec.ABSL} and \ref{sec.LP}. In Appendix \ref{app.case.rot}, we discuss the case where $V$ is not taken as \eqref{def.V} but as \eqref{def.V.rot}.

\smallskip

\noindent
{\bf Notation.}\
In the following, we write $A\lesssim B$ if there is some constant $C$, called the implicit constant, such that $A\le CB$, and moreover, analogously $A\gtrsim B$ if $B\lesssim A$ and $A\approx B$ if $A\lesssim B\lesssim A$. The dependence of an implicit constant on other parameters will be indicated contextually. We let $\langle \cdot, \cdot\rangle$ denote the usual pairing for mappings and $C^\infty_{0,\sigma}(\Omega)$ the space of smooth divergence-free vector fields on $\Omega$. If there is no confusion, we simplify the notation of norms: $\|\,\cdot\,\|_{\opX(\R,L^\infty_{\rho})}=\|\,\cdot\,\|_{\opX(L^\infty_{\rho})}$ and $\|\,\cdot\,\|_{\opFX(\R,L^\infty_{\rho})}=\|\,\cdot\,\|_{\opFX(L^\infty_{\rho})}$ for example.

\section{Preliminaries}
\label{sec.prelim}

\subsection{Operators}
\label{subsec.op}

This subsection collects the operators in the cylindrical coordinates on $\Omega$. Let $(r,\theta,z)$ be the coordinates defined in the introduction and ${\bf e}_r,{\bf e}_\theta,{\bf e}_z$ be the unit vectors.

Let $v$ be an axisymmetric vector field on $\Omega$, namely, 
$$
v(r,\theta,z)
= 
v^r(r,z) {\bf e}_r
+ v^\theta(r,z) {\bf e}_\theta
+ v^z(r,z) {\bf e}_z.
$$
Then, by the well-known formulas for $u = u^r {\bf e}_r + u^\theta {\bf e}_\theta + u^z {\bf e}_z$
\begin{align*}
\begin{split}
\opdiv u
&=
\dfrac{1}{r} \partial_r (ru^r)
+ \frac{1}{r} \partial_\theta u^\theta
+ \partial_z u^z, \\
\oprot u
&=
\Big(
\frac{1}{r} \partial_\theta u^z
- \partial_z u^\theta
\Big) {\bf e}_r 
+ (\partial_z u^r - \partial_r u^z) {\bf e}_\theta 
+ \dfrac{1}{r} 
\Big(
\partial_r (ru^\theta) 
- \partial_\theta u^r
\Big) {\bf e}_z, 
\end{split}
\end{align*}
we have 
\begin{align}\label{formula1}
\begin{split}
\opdiv v
&= 
\dfrac{1}{r} \partial_r (rv^r) + \partial_z v^z, \\
\oprot v 
&= 
- \partial_z v^\theta {\bf e}_r
+ (\partial_z v^r - \partial_r v^z) {\bf e}_\theta
+ \dfrac{1}{r} \partial_r (rv^\theta) {\bf e}_z.
\end{split}
\end{align}
The vector Laplacian on $v$ is represented as 
\begin{align}\label{formula2}
\begin{split}
\Big\{
\Big(\Delta - \frac{1}{r^2} \Big) v^r
\Big\}
{\bf e}_r 
+ \Big\{
\Big(\Delta - \frac{1}{r^2} \Big) v^\theta
\Big\}
{\bf e}_\theta
+ (\Delta v^z) 
{\bf e}_z, 
\end{split}
\end{align}
where $\Delta$ denotes the differential operator on axisymmetric scalar fields
\begin{align}\label{formula3}
\begin{split}
\Delta 
= \partial_r^2 + \frac{1}{r} \partial_r + \partial_z^2. 
\end{split}
\end{align}
Moreover, by the formula for $W=W^r {\bf e}_r + W^\theta {\bf e}_\theta$ 
and 
$\omega = \omega^r {\bf e}_r + \omega^\theta {\bf e}_\theta + \omega^z {\bf e}_z$
\begin{align}\label{formula4}
\begin{split}
W\times\omega 
&= 
(W^r {\bf e}_r + W^\theta {\bf e}_\theta) 
\times (\omega^r {\bf e}_r + \omega^\theta {\bf e}_\theta + \omega^z {\bf e}_z) \\
&= 
W^\theta \omega^z {\bf e}_r 
- W^r \omega^z {\bf e}_\theta 
+ (W^r \omega^\theta - W^\theta \omega^r) {\bf e}_z, 
\end{split}
\end{align}
we have, for $V$ in \eqref{def.V}, 
\begin{align}\label{formula5}
\begin{split}
V\times\oprot v
&=
- V^r (\oprot v)^z {\bf e}_\theta 
+ V^r (\oprot v)^\theta {\bf e}_z \\
&=
\frac{\gamma}{r^2} \partial_r (rv^\theta) {\bf e}_\theta
- \frac{\gamma}{r} (\partial_z v^r - \partial_r v^z) {\bf e}_z. 
\end{split}
\end{align}
%

\subsection{Function spaces}
\label{subsec.func.sp}

This subsection collects the definitions and properties of vector-valued function spaces used in this paper. The reader is referred to \cite[Section 2]{GigSaa2013} and \cite[Section 2]{GigSaa2015} for further details.

Let $E$ be a Banach space. We denote by $L^1(\R,E)$ the $E$-valued Lebesgue space with respect to the Lebesgue measure $\Lambda$ equipped with the norm $\|\cdot\|_{L^1(E)}$. Let $\mathcal{S}(\R)$ denote the space of rapidly decreasing functions. We extend the Fourier transformation on $\mathcal{S}(\R)$
$$
\hat{f}(\zeta) 
= \mathcal{F}f(\zeta) 
= \frac{1}{(2\pi)^{\frac12}} \int_{-\infty}^{\infty} e^{-\iu z\zeta} f(z) \dd z 
$$
by duality to the space of tempered distributions $\mathcal{S}'(\R,E)$, the set of bounded linear operators from $\mathcal{S}(\R)$ to $E$. This extension is again denoted by $\hat{f}$ or $\mathcal{F}f$ for $f\in\mathcal{S}'(\R,E)$. We also extend the inverse Fourier transformation on $\mathcal{S}(\R)$ to the space $\mathcal{S}'(\R,E)$ in the same manner and denote the extension by $\check{f}$ or $\mathcal{F}^{-1}f$ for $f\in\mathcal{S}'(\R,E)$.

Let $\mathcal{B}(\R)$ denote the Borel $\sigma$-algebra over $\R$ and $\mathcal{A}$ a $\sigma$-algebra over $\R$. A mapping $\mu:\mathcal{A}\to E$ is called a finite $E$-valued Radon measure if $\mu$ is an $E$-valued measure and the variation $|\mu|:\mathcal{A}\to[0,\infty)$ is a finite Radon measure, that is, a finite Borel regular measure. The set of all finite $E$-valued Radon measures is denoted by $\opM(\R,E)$. The space $\opM(\R,E)$ is a Banach space under the total variation norm $\|\cdot\|_{\opM(E)}=|\cdot|(\R)$.

Let $\rho\ge0$ and $\opX(\R,L^\infty_{\rho})$ be the space in \eqref{intro.eq.X} with the norm \eqref{intro.eq.X.norm} in the introduction. It follows by definition that $\opX(\R,L^\infty_{\rho})$ is a closed subspace of $\opM(\R,L^\infty_{\rho})$. We define a multiplication on $\opX(\R,L^\infty_{\rho})$ as follows. For given $f^{(1)}, f^{(2)}\in\opX(\R,L^\infty_{\rho})$ written as
$$
f^{(j)}=
g^{(j)} \dd \Lambda
+ \sum_{m\in\Z} a^{(j)}_m \delta_m, 
\quad
j=1,2, 
$$
we define the convolution 
\begin{align}\label{def.conv.X}
\begin{split}
f^{(1)}\ast f^{(2)}
&=
\bigg(
g^{(1)}\ast g^{(2)} 
+ \sum_{m\in\Z} 
(
a^{(1)}_m \tau_m g^{(2)} 
+ a^{(2)}_m \tau_m g^{(1)}
)
\bigg) \dd \Lambda \\
&\quad
+ \sum_{m\in\Z} 
\bigg(
\sum_{m=k+l} a^{(1)}_k a^{(2)}_l
\bigg)
\delta_m, 
\end{split}
\end{align}
where $\tau_m$ is the translation 
$
\tau_m f(r,\zeta)=f(r,\zeta+m)
$. 
Then it is easy to check that 
\begin{align}\label{est1.conv.X}
\begin{split}
\|f^{(1)}\ast f^{(2)}\|_{\opX(L^\infty_{\rho})}
\le 
\|f^{(1)}\|_{\opX(L^\infty_{\rho})}
\|f^{(2)}\|_{\opX(L^\infty_{\rho})}
\end{split}
\end{align}
by using the Young inequality. Thus the convolution \eqref{def.conv.X} defines a multiplication on $\opX(\R,L^\infty_{\rho})$. Moreover, if additionally $f^{(1)}\in \opX(\R,L^\infty_{\rho_1})$ and $f^{(2)}\in \opX(\R,L^\infty_{\rho_2})$, we have
\begin{align}\label{est2.conv.X}
\begin{split}
\|f^{(1)}\ast f^{(2)}\|_{\opX(L^\infty_{\rho_1+\rho_2})}
\le 
\|f^{(1)}\|_{\opX(L^\infty_{\rho_1})}
\|f^{(2)}\|_{\opX(L^\infty_{\rho_2})}. 
\end{split}
\end{align}

By embedding $\opX(\R,L^\infty_{\rho}) \hookrightarrow \mathcal{S}'(\R,L^\infty_{\rho})$, one can define the Fourier transformation
$$
\hat{f}(\zeta) 
= \mathcal{F}f(\zeta) 
= \widehat{g}(r,\zeta)
+ \sum_{m\in\Z} a_m(r) e^{\iu m \zeta}, 
\quad
f\in\opX(\R,L^\infty_{\rho}). 
$$
Therefore, the space $\opFX(\R,L^\infty_{\rho})$ in \eqref{intro.eq.FX} is well-defined with the norm $\|\hat{f}\|_{\opFX(L^\infty_{\rho})}=\|f\|_{\opX(L^\infty_{\rho})}$ and is continuously embedded in $\opBUC(\R,L^\infty_{\rho})$. By definition, the value of this norm does not change if the Fourier transformation is replaced by its inverse: 
$$
\|\cdot\|_{\opFX(L^\infty_{\rho})}
=\|\mathcal{F}^{-1}\cdot\|_{\opX(L^\infty_{\rho})}
=\|\mathcal{F}\cdot\|_{\opX(L^\infty_{\rho})}. 
$$
Also, the multiplication is well-defined. Indeed, for $\widehat{f^{(1)}},\widehat{f^{(2)}}\in \opFX(\R,L^\infty_{\rho})$, from
\begin{align*}
\begin{split}
\widehat{f^{(1)}} \widehat{f^{(2)}}
=
\widehat{f^{(1)}\ast f^{(2)}}
\end{split}
\end{align*}
and \eqref{est1.conv.X}, we have
\begin{align*}
\begin{split}
\|\widehat{f^{(1)}} \widehat{f^{(2)}}\|_{\opFX(L^\infty_{\rho})}
\le
\|\widehat{f^{(1)}}\|_{\opFX(L^\infty_{\rho})}
\|\widehat{f^{(2)}}\|_{\opFX(L^\infty_{\rho})}. 
\end{split}
\end{align*}
Moreover, if additionally $\widehat{f^{(1)}}\in \opFX(\R,L^\infty_{\rho_1})$ and $\widehat{f^{(2)}}\in \opFX(\R,L^\infty_{\rho_2})$, by \eqref{est2.conv.X}, we have 
\begin{align*}
\begin{split}
\|\widehat{f^{(1)}} \widehat{f^{(2)}}\|_{\opFX(L^\infty_{\rho_1+\rho_2})}
\le
\|\widehat{f^{(1)}}\|_{\opFX(L^\infty_{\rho_1})}
\|\widehat{f^{(2)}}\|_{\opFX(L^\infty_{\rho_2})} 
\end{split}
\end{align*}

The argument above shows the following lemma. 
%
\begin{lemma}\label{lem.X.FX}
Let $\rho,\rho_1,\rho_2\ge0$. Then the following hold.
\begin{enumerate}[(1)]
\item\label{item1.lem.X.FX}
$\opX(\R,L^\infty_{\rho})$ is a Banach algebra under the convolution \eqref{def.conv.X}. Moreover, 
$$
\opX(\R,L^\infty_{\rho_1})\ast \opX(\R,L^\infty_{\rho_2})
\hookrightarrow
\opX(\R,L^\infty_{\rho_1+\rho_2}). 
$$

\item\label{item2.lem.X.FX}
$\opFX(\R,L^\infty_{\rho})$ is a Banach algebra under the multiplication and satisfies
$$
\opFX(\R,L^\infty_{\rho})\hookrightarrow\opBUC(\R,L^\infty_{\rho}). 
$$
Moreover, 
$$
\opFX(\R,L^\infty_{\rho_1})\cdot \opFX(\R,L^\infty_{\rho_2})
\hookrightarrow
\opFX(\R,L^\infty_{\rho_1+\rho_2}). 
$$
\end{enumerate}
\end{lemma}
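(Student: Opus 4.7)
The statement consolidates the computations carried out on the preceding pages, so the proof proceeds as a systematic verification. For part (1), I would first check that the convolution \eqref{def.conv.X} produces an element of $\opX(\R,L^\infty_{\rho_1+\rho_2})$ when the two inputs lie in $\opX(\R,L^\infty_{\rho_j})$ for $j=1,2$. The absolutely continuous piece $g^{(1)}\ast g^{(2)}$ is handled by Young's convolution inequality combined with the pointwise weight estimate $r^{\rho_1+\rho_2}|f(r)h(r)|\le \|f\|_{L^\infty_{\rho_1}}\|h\|_{L^\infty_{\rho_2}}$; the mixed terms $\sum_m a_m^{(j)}\tau_m g^{(3-j)}$ are controlled summand by summand via the triangle inequality together with translation invariance of $\Lambda$; and the purely atomic contribution $\sum_m\bigl(\sum_{k+l=m}a_k^{(1)}a_l^{(2)}\bigr)\delta_m$ is bounded through the discrete Young inequality. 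Adding the three contributions gives \eqref{est2.conv.X}, which specializes to \eqref{est1.conv.X} when $\rho_1=\rho_2=\rho$. Associativity, commutativity, and the remaining algebra axioms are mechanical consequences of the corresponding properties of ordinary $L^1$-convolution and discrete convolution. The Banach space structure itself is inherited from $\opM(\R,L^\infty_\rho)$, of which $\opX(\R,L^\infty_\rho)$ has already been observed to be a closed subspace.

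For part (2), the Fourier transform $\mathcal F:\opX(\R,L^\infty_\rho)\to\opFX(\R,L^\infty_\rho)$ is by construction a linear isometric bijection, so $\opFX(\R,L^\infty_\rho)$ inherits the Banach space structure. The intertwining identity $\widehat{f^{(1)}\ast f^{(2)}}=\widehat{f^{(1)}}\,\widehat{f^{(2)}}$, verified first on pairs of Schwartz functions and Dirac masses and then extended by linearity, density, and continuity of the convolution in the $\opX$-norm, transports the Banach algebra structure and both weighted multiplicative bounds from $\opX$ to $\opFX$. To prove the continuous embedding into $\opBUC(\R,L^\infty_\rho)$, I would decompose
\[
\hat f(\zeta)=\hat g(\zeta)+\sum_{m\in\Z}a_m e^{\iu m\zeta}.
\]
The series on the right is absolutely and uniformly convergent to an $L^\infty_\rho$-valued uniformly continuous function, bounded by $\sum_m\|a_m\|_{L^\infty_\rho}$. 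The uniform continuity of $\hat g$ follows from the bound
\[
\|\hat g(\zeta+h)-\hat g(\zeta)\|_{L^\infty_\rho}\le \frac{1}{(2\pi)^{1/2}}\int_\R|e^{-\iu z h}-1|\,\|g(z)\|_{L^\infty_\rho}\,\dd z,
\]
which vanishes as $h\to 0$ uniformly in $\zeta$ by dominated convergence; summing the two estimates yields $\|\hat f\|_{\opBUC(L^\infty_\rho)}\le\|f\|_{\opX(L^\infty_\rho)}$.

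The only mildly delicate point is verifying that \eqref{def.conv.X} really defines a bilinear, associative convolution on $\opX$ that is compatible with the Fourier transform, i.e.\ that all constituent sums and integrals converge absolutely in $L^\infty_\rho$ and that the reorderings implicit in the identity $\widehat{f^{(1)}\ast f^{(2)}}=\widehat{f^{(1)}}\widehat{f^{(2)}}$ are justified. Once this bookkeeping is established, every remaining step is a routine scalar Young-type estimate or a standard dominated-convergence argument for the Fourier transform, so I do not expect a substantive obstacle.
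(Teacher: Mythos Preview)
Your proposal is correct and follows essentially the same route as the paper: the paper simply states that the lemma is a consequence of the discussion preceding it, which amounts to checking \eqref{est1.conv.X}--\eqref{est2.conv.X} via Young's inequality on each of the three pieces of \eqref{def.conv.X}, transporting the algebra structure to $\opFX$ through the identity $\widehat{f^{(1)}}\widehat{f^{(2)}}=\widehat{f^{(1)}\ast f^{(2)}}$, and noting the embedding into $\opBUC$. Your write-up supplies more detail (notably the dominated-convergence argument for uniform continuity of $\hat g$) but there is no substantive difference in approach.
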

%
%
\begin{remark}\label{rem.lem.X.FX}
In \cite{GigSaa2013,GigSaa2015}, the convolution and the Fourier transformation are defined on $\opM(\R,E)$ for spaces $E$ having the Radon-Nikod\'{y}m property \cite[Chapter $\mathrm{I}\hspace{-1.2pt}\mathrm{I}\hspace{-1.2pt}\mathrm{I}$, Definition 3]{DieUhlbook1977}, which is known to be satisfied by reflexive spaces $E$; see \cite[Chapter $\mathrm{I}\hspace{-1.2pt}\mathrm{I}\hspace{-1.2pt}\mathrm{I}$, Corollary 13]{DieUhlbook1977}. However, it is unclear to the author whether the weighted space $L^\infty_{\rho}$ in \eqref{intro.def.L} has this property. This is a reason why in this paper we are working in $\opX(\R,L^\infty_{\rho})$ on which both the convolution and the Fourier transformation can be defined in the explicit manners as above. 
\end{remark}

\section{Axisymmetric Biot-Savart law}
\label{sec.ABSL}

In this section, we define the axisymmetric Biot-Savart law. Its applications given in Subsection \ref{subsec.app} are essential when studying the linearized problem of \eqref{intro.eq.NP} in the introduction.

For a given scalar field $\omega=\omega(r,z)$, we consider the problem
\begin{equation*}
\left\{
\begin{array}{ll}
-\Big(
\partial_r^2 
- \dfrac{1}{r} \partial_r 
+ \partial_z^2 
\Big) \widetilde{\psi} 
= 
r\omega
&\mbox{in}\ (1,\infty)\times\R \\ [10pt] 
\widetilde{\psi}=0
&\mbox{on}\ \{1\}\times\R. 
\end{array}\right.
\end{equation*}
The solution $\widetilde{\psi}$ is called the axisymmetric streamfunction or the Stokes streamfunction. In this paper, it is more convenient to use the functions $\psi=\psi(r,z)$ defined by
$$
\psi(r,z) = \frac{\widetilde{\psi}(r,z)}{r}. 
$$
It is easy to see that $\psi$ solves 
\begin{equation}\label{eq.BSL}
\left\{
\begin{array}{ll}
-\Big(
\Delta - \dfrac{1}{r^2}
\Big) \psi 
= 
\omega
&\mbox{in}\ (1,\infty)\times\R \\ [10pt] 
\psi=0
&\mbox{on}\ \{1\}\times\R. 
\end{array}\right.
\end{equation}
Using this solution $\psi$, we formally define the vector field $\mathcal{V} = \mathcal{V}[\omega]$ by 
\begin{align}\label{def.ABSL}
\begin{split}
&\mathcal{V}
= \mathcal{V}^r(r,z) {\bf e}_r  
+ \mathcal{V}^z(r,z) {\bf e}_z, \\
&\mathcal{V}^r(r,z)
:= -\partial_z\psi(r,z), 
\qquad 
\mathcal{V}^z(r,z)
:= \frac{1}{r} \partial_r(r\psi(r,z)). 
\end{split}
\end{align}
The formula $\mathcal{V}$ in \eqref{def.ABSL} is called the axisymmetric Biot-Savart law. It is clear that 
\begin{align}\label{prpty.BSL}
\begin{split}
&\opdiv \mathcal{V} = 0, 
\qquad
\oprot \mathcal{V}
= \omega {\bf e}_\theta, 
\qquad 
\mathcal{V}^r|_{\{1\}\times\R} = 0. 
\end{split}
\end{align}

Now we let $\rho\ge0$ and $\omega\in \opFX(\R,L^\infty_{\rho})$ and will solve \eqref{eq.BSL} using the Fourier transformation. The transform $\hat{\psi}$ solves the boundary value problem 
\begin{align}\label{eq.ode.strmfunc}
-\frac{\dd^2 \hat{\psi}}{\dd r^2} 
- \frac{1}{r} \frac{\dd \hat{\psi}}{\dd r} 
+ \Big(\zeta^2 + \frac{1}{r^2}\Big) \hat{\psi} 
= \hat{\omega}, \quad r>1, 
\qquad
\hat{\psi}(1) = 0. 
\end{align}
The linearly independent solutions of the homogeneous equation are
$$
\left\{
\begin{array}{ll}
K_{1}(|\zeta|r)
\mkern9mu \text{and} \mkern9mu 
I_{1}(|\zeta|r)
&\mbox{if}\ \zeta\neq0 \\ [5pt]
r^{-1}
\mkern9mu \text{and} \mkern9mu 
r 
&\mbox{if}\ \zeta=0, 
\end{array}
\right. 
$$
where $K_\nu(z), I_\nu(z)$ are the modified Bessel functions in Appendix \ref{app.lem}, with the Wronskian 
$$
\left\{
\begin{array}{ll}
r^{-1}
&\mbox{if}\ \zeta\neq0 \\ [5pt]
2r^{-1}
&\mbox{if}\ \zeta=0. 
\end{array}
\right. 
$$
Thus, if the Fourier transform of $\omega$ is given by 
$$
\hat{\omega} 
= h(r,\zeta) \dd \Lambda 
+ \sum_{m\in\Z} b_m(r) \delta_m, 
$$
the solution $\hat{\psi}$ of \eqref{eq.ode.strmfunc} belonging to $\bigcup_{\rho\ge0} \opX(\R,L^\infty_{\rho})$ has the form 
\begin{align}\label{rep.psi1}
\begin{split}
\hat{\psi}(r) 
=
\int_1^\infty 
(\hat{\omega}\lfloor \sigma_1)(r,s) \dd s 
+ \int_1^r 
(\hat{\omega}\lfloor \sigma_2)(r,s) \dd s 
+ \int_r^\infty 
(\hat{\omega}\lfloor \sigma_3)(r,s) \dd s. 
\end{split}
\end{align}
Here $\hat{\omega}\lfloor \sigma_i$ denotes the Borel measure 
\begin{align}\label{rep.psi2}
\begin{split}
\hat{\omega}\lfloor \sigma_i(r,s,\mathcal{O})
&=
\int_{\mathcal{O}}
\sigma_i(r,s,\zeta) h(s,\zeta)
\dd \Lambda \\
&\quad 
+ \sum_{m\in\Z\cap\mathcal{O}}
\sigma_i(r,s,m) b_m(s),
\quad \mathcal{O}\in \mathcal{B}(\R) 
\end{split}
\end{align}
and $\sigma_1, \sigma_2, \sigma_3$ are the functions 
\begin{align}\label{rep.psi3}
\begin{split}
\sigma_1(r,s,\zeta) 
&= 
\left\{
\begin{array}{ll}
-\dfrac{I_1(|\zeta|)}{K_1(|\zeta|)} K_1(|\zeta|r) sK_1(|\zeta|s) 
&\mbox{if}\ \zeta\neq0 \\ [10pt]
\dfrac12r^{-1}
&\mbox{if}\ \zeta=0
\end{array}
\right. \\
\sigma_2(r,s,\zeta) 
&= 
\left\{
\begin{array}{ll}
K_1(|\zeta|r) sI_1(|\zeta|s)
&\mbox{if}\ \zeta\neq0 \\ [10pt]
\dfrac12r^{-1}s^{2}
&\mbox{if}\ \zeta=0
\end{array}
\right. \\
\sigma_3(r,s,\zeta) 
&= 
\left\{
\begin{array}{ll}
I_1(|\zeta|r) sK_1(|\zeta|s)
&\mbox{if}\ \zeta\neq0 \\ [10pt]
\dfrac12r
&\mbox{if}\ \zeta=0. 
\end{array}
\right. 
\end{split}
\end{align}
A representation of $\psi$ is obtained by the inverse Fourier transform of $\hat{\psi}$ in \eqref{rep.psi1}--\eqref{rep.psi3}.

The solution $\hat{\psi}$ in \eqref{rep.psi1}--\eqref{rep.psi3} of the problem \eqref{eq.ode.strmfunc} is unique in $\bigcup_{\rho\ge0} \opX(\R,L^\infty_{\rho})$; see for instance \cite[Chapter $\mathrm{I}\hspace{-1.2pt}\mathrm{I}\hspace{-1.2pt}\mathrm{I}$]{Horbook2003} for the existence and uniqueness of solutions to the linear ordinary differential equations with smooth coefficients in the space of distributions.

\subsection{Well-definedness}
\label{subsec.WD}

In this subsection, by estimating $\hat{\psi}$ in \eqref{rep.psi1}--\eqref{rep.psi3}, we prove the well-definedness of the axisymmetric Biot-Savart law in \eqref{def.ABSL} for given $\omega \in \opFX(\R,L^\infty_{\rho})$ with $2<\rho<3$.

Estimates of $\hat{\psi}$ are given in Lemmas \ref{lem.strmfunc.L1}--\ref{lem.strmfunc.seq} below. 
%
\begin{lemma}\label{lem.strmfunc.L1}
Let $2<\rho<3$ and $\omega\in \opFX(\R,L^\infty_{\rho})$ be given by, for some $h\in L^1(\R,L^\infty_{\rho})$, 
$$
\hat{\omega}=h(r,\zeta) \dd \Lambda. 
$$
The solution $\hat{\psi}$ of \eqref{eq.ode.strmfunc} has the form 
\begin{align}\label{eq1.lem.strmfunc.L1}
\begin{split}
\hat{\psi} = \hat{\psi}[h](r,\zeta) \dd \Lambda 
\end{split}
\end{align}
with
\begin{align}\label{eq2.lem.strmfunc.L1}
\begin{split}
\hat{\psi}[h](r,\zeta)
&=
-d[h](\zeta) K_1(|\zeta|r) 
+ K_1(|\zeta|r) 
\int_1^r I_1(|\zeta|s) s h(s,\zeta) \dd s \\ 
&\quad 
+ I_1(|\zeta|r) 
\int_r^\infty K_1(|\zeta|s) s h(s,\zeta) \dd s, 
\quad \zeta\neq0, \\
d[h](\zeta) 
&:=
\frac{I_{1}(|\zeta|)}{K_{1}(|\zeta|)}
\int_{1}^{\infty}
K_{1}(|\zeta|s) s h(s,\zeta) \dd s, 
\quad \zeta\neq0. 
\end{split}
\end{align}
Moreover, $\hat{\psi}$ satisfies the estimate
\begin{align}\label{est.lem.strmfunc.L1}
\begin{split}
\Big\|
\zeta
\mapsto
\Big(\frac{\dd}{\dd r}, \iu \zeta\Big)^{j} 
\hat{\psi}[h](r,\zeta)
\Big\|_{L^1(L^\infty_{\rho-2+j})}
\lesssim
\|h\|_{L^1(L^\infty_{\rho})}, 
\quad j=0,1,2 
\end{split}
\end{align}
and the relation
\begin{align}\label{eq3.lem.strmfunc.L1}
\begin{split}
\frac{\dd}{\dd r} \hat{\psi}[h](1,\zeta)
= \frac{d[h](\zeta)}{I_1(|\zeta|)}, 
\quad \zeta\neq0. 
\end{split}
\end{align}
The implicit constant depends on $\rho$.
\end{lemma}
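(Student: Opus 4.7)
The plan is to derive the representation \eqref{eq2.lem.strmfunc.L1} by variation of parameters, verify \eqref{eq3.lem.strmfunc.L1} by direct differentiation at $r=1$, and prove the weighted estimate \eqref{est.lem.strmfunc.L1} by pointwise-in-$\zeta$ bounds followed by Fubini.

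For $\zeta\neq 0$, the homogeneous part of \eqref{eq.ode.strmfunc} has fundamental pair $K_1(|\zeta|r),\,I_1(|\zeta|r)$ with Wronskian $r^{-1}$. Variation of parameters with integration limits chosen so that each integral converges in the weighted class produces the particular solution
\begin{align*}
y_p(r,\zeta)
= K_1(|\zeta|r)\int_1^r I_1(|\zeta|s)\, s\, h(s,\zeta)\,\dd s
+ I_1(|\zeta|r)\int_r^\infty K_1(|\zeta|s)\, s\, h(s,\zeta)\,\dd s,
\end{align*}
the second integral being absolutely convergent thanks to the exponential decay of $K_1$ combined with $h\in L^\infty_\rho$ and $\rho>2$. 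Writing the general solution as $y_p + c_1 K_1(|\zeta|r) + c_2 I_1(|\zeta|r)$, the exponential growth of $I_1$ forces $c_2=0$ in the target class $L^\infty_{\rho-2}$, and the boundary condition $\hat\psi(1)=0$ fixes $c_1=-d[h](\zeta)$, giving \eqref{eq2.lem.strmfunc.L1}. Differentiating this expression in $r$ at $r=1$, the $\int_1^r$ term vanishes, and the remaining boundary contributions collapse via the Wronskian identity $I_1'(z)K_1(z)-K_1'(z)I_1(z)=z^{-1}$ at $z=|\zeta|$ to yield \eqref{eq3.lem.strmfunc.L1}.

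For the estimate, I would reduce \eqref{est.lem.strmfunc.L1} to the uniform pointwise-in-$\zeta$ bound
\begin{align*}
\sup_{r>1} r^{\rho-2+j}\,\Big|\bigl(\tfrac{\dd}{\dd r},\iu\zeta\bigr)^{j}\hat\psi[h](r,\zeta)\Big|
\lesssim \|h(\cdot,\zeta)\|_{L^\infty_\rho}, \quad j=0,1,2,
\end{align*}
and then apply Fubini to pass to the $L^1(\R)$-in-$\zeta$ statement. For $j=0,1$, I would split into the regimes $|\zeta|\le 1$ and $|\zeta|\ge 1$ and, within each, further split the $s$-integrations around $s\approx|\zeta|^{-1}$, inserting the small- and large-argument asymptotics of $K_1,I_1$ and their derivatives from Appendix \ref{app.lem}. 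The window $2<\rho<3$ is what makes every inner integral finite: $\rho>2$ secures integrability at infinity against the exponential decay of $K_1$, while $\rho<3$ keeps the large-$r$ growth of $\int_1^r I_1(|\zeta|s)\,s\cdot s^{-\rho}\,\dd s$ compatible with the decay of the prefactor $K_1(|\zeta|r)$. For $j=2$ the second radial derivative is recovered algebraically from the ODE itself,
\begin{align*}
\frac{\dd^{2}\hat\psi}{\dd r^{2}}
= -\frac{1}{r}\frac{\dd\hat\psi}{\dd r}
+ \Big(\zeta^{2}+\frac{1}{r^{2}}\Big)\hat\psi - \hat\omega,
\end{align*}
and thereby reduces to the cases $j=0,1$ together with the bound on $\zeta^{2}\hat\psi$.

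The hard part will be the regime $|\zeta|\to 0$ combined with $r$ in the crossover zone $r\sim |\zeta|^{-1}$, where the coefficient $d[h](\zeta)$ threatens to be singular since $K_1(|\zeta|)\sim|\zeta|^{-1}$. The saving cancellation is that $I_1(|\zeta|)/K_1(|\zeta|)$ vanishes like $|\zeta|^{2}$, precisely offsetting the growth of $\int_1^\infty K_1(|\zeta|s)\,s\,h(s,\zeta)\,\dd s$ as $|\zeta|\to 0$, so that $d[h](\zeta)\,K_1(|\zeta|r)$ still admits the advertised weighted bound in $r$. Making this cancellation uniform in both $\zeta$ and $r$ is where the positivity and monotonicity of $K_\nu$ emphasised in the introduction become the cleanest tool, since sign tracking lets one bypass the logarithmic subtleties in the $\zeta\to 0$ expansion of $K_1$.
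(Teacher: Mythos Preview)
Your proposal is correct and follows essentially the same route as the paper: variation of parameters for the formula, the Wronskian identity for \eqref{eq3.lem.strmfunc.L1}, a splitting into $|\zeta|\lessgtr 1$ and $s\lessgtr|\zeta|^{-1}$ with the Bessel asymptotics for the $j=0$ estimate, the recursion \eqref{eq.rel} (equivalently, direct differentiation) for $j=1$, and the ODE for $j=2$.

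One small misdirection: in your last paragraph you attribute the control of the $d[h]$ term to positivity and monotonicity of $K_\nu$, invoking the theme from the introduction. In fact the paper's proof of this lemma uses only the two-sided asymptotic bounds of Lemma~\ref{lem.MBF}; positivity plays no role here because $K_1$ has a clean power singularity $K_1(z)\sim z^{-1}$ at the origin, with no logarithmic term to cancel. The place where positivity is genuinely exploited is Lemma~\ref{lem.est.F}, where a \emph{lower} bound on an integral of $K_1 K_{\frac{\gamma}{2}+1}$ is needed and sign information is indispensable. Your asymptotic argument for $d[h](\zeta)K_1(|\zeta|r)$ is already sufficient on its own.
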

%
%
\begin{proof}
We may identify the measure $h\dd \Lambda$ with the function $h$ and focus on the proof of \eqref{est.lem.strmfunc.L1} and \eqref{eq3.lem.strmfunc.L1}. Indeed, the Fubini theorem is applicable in \eqref{rep.psi1}--\eqref{rep.psi2} thanks to the estimates \eqref{est6.proof.lem.strmfunc.L1}--\eqref{est9.proof.lem.strmfunc.L1} below. Thus the formula \eqref{rep.psi1}--\eqref{rep.psi3} can be written as \eqref{eq1.lem.strmfunc.L1}--\eqref{eq2.lem.strmfunc.L1}.

In the following proof, Lemma \ref{lem.MBF} in Appendix \ref{app.lem} will be used without any reference. Let us estimate $\sigma_i(r,s,\zeta) h(s,\zeta)$ in \eqref{rep.psi2} for $i=1,2,3$. When $0<|\zeta|\le1$, we have 
\begin{align}\label{est1.proof.lem.strmfunc.L1}
\begin{split}
&|\sigma_1(r,s,\zeta) h(s,\zeta)| 
=
\bigg|
-\dfrac{I_1(|\zeta|)}{K_1(|\zeta|)} K_1(|\zeta|r) 
s K_1(|\zeta|s) h(s,\zeta)
\bigg| \\
&\lesssim
\left\{
\begin{array}{ll}
|\zeta| K_1(|\zeta|r) |h(s,\zeta)|
&\mbox{if}\ s \le |\zeta|^{-1} \\ [5pt]
|\zeta|^{\frac32} K_1(|\zeta|r)
s^{\frac12} e^{-|\zeta|s}
|h(s,\zeta)| 
\quad 
&\mbox{if}\ s \ge |\zeta|^{-1} 
\end{array}\right. \\
&\lesssim
\left\{
\begin{array}{ll}
|\zeta|
K_1(|\zeta|r) s^{-\rho}
\|h(\zeta)\|_{L^\infty_\rho}
&\mbox{if}\ s \le |\zeta|^{-1} \\ [5pt]
|\zeta|^{\frac32}
K_1(|\zeta|r)
s^{-\rho+\frac12} e^{-|\zeta|s}
\|h(\zeta)\|_{L^\infty_\rho} 
\quad 
&\mbox{if}\ s \ge |\zeta|^{-1}. 
\end{array}\right.
\end{split}
\end{align}
For $i=2,3$, we have 
\begin{align}\label{est2.proof.lem.strmfunc.L1}
\begin{split}
&|\sigma_2(r,s,\zeta) h(s,\zeta)| 
= 
|K_1(|\zeta|r) sI_1(|\zeta|s) h(s,\zeta)| \\
&\lesssim
\left\{
\begin{array}{ll}
r^{-1} s^{2}
|h(s,\zeta)|
&\mbox{if}\ s \le r \le |\zeta|^{-1} \\ [5pt]
|\zeta|^{\frac12} r^{-\frac12} e^{-|\zeta|r} s^{2} 
|h(s,\zeta)|
&\mbox{if}\ s \le |\zeta|^{-1} \le r \\ [5pt]
|\zeta|^{-1} r^{-\frac12} s^{\frac12} e^{-|\zeta|(r-s)}
|h(s,\zeta)|
&\mbox{if}\ |\zeta|^{-1} \le s \le r 
\end{array}\right. \\
&\lesssim
\left\{
\begin{array}{ll}
r^{-1} s^{-\rho+2}
\|h(\zeta)\|_{L^\infty_\rho}
&\mbox{if}\ s \le r \le |\zeta|^{-1} \\ [5pt]
|\zeta| e^{-|\zeta|r} s^{-\rho+2} 
\|h(\zeta)\|_{L^\infty_\rho}
&\mbox{if}\ s \le |\zeta|^{-1} \le r \\ [5pt]
s^{-\rho+1} e^{-|\zeta|(r-s)}
\|h(\zeta)\|_{L^\infty_\rho}
&\mbox{if}\ |\zeta|^{-1} \le s \le r 
\end{array}\right.
\end{split}
\end{align}
and 
\begin{align}\label{est3.proof.lem.strmfunc.L1}
\begin{split}
&|\sigma_3(r,s,\zeta) h(s,\zeta)| = |I_1(|\zeta|r) K_1(|\zeta|s) s h(s,\zeta)| \\
&\lesssim
\left\{
\begin{array}{ll}
r |h(s,\zeta)|
&\mbox{if}\ r \le s \le |\zeta|^{-1} \\ [5pt]
|\zeta|^{\frac12} r s^{\frac12}
e^{-|\zeta|s} 
|h(s,\zeta)|
&\mbox{if}\ r \le |\zeta|^{-1} \le s \\ [5pt]
|\zeta|^{-1} r^{-\frac12} 
s^{\frac12} e^{|\zeta|(r-s)}
|h(s,\zeta)|
&\mbox{if}\ |\zeta|^{-1} \le r \le s 
\end{array}\right. \\
&\lesssim
\left\{
\begin{array}{ll}
r s^{-\rho}
\|h(\zeta)\|_{L^\infty_\rho}
&\mbox{if}\ r \le s \le |\zeta|^{-1} \\ [5pt]
|\zeta|^{-\frac12} s^{-\rho+\frac12} e^{-|\zeta|s} 
\|h(\zeta)\|_{L^\infty_\rho} 
&\mbox{if}\ r \le |\zeta|^{-1} \le s \\ [5pt]
|\zeta|^{-1} r^{-\frac12} 
s^{-\rho+\frac12} e^{|\zeta|(r-s)}
\|h(\zeta)\|_{L^\infty_\rho}
&\mbox{if}\ |\zeta|^{-1} \le r \le s. 
\end{array}\right.
\end{split}
\end{align}
When $|\zeta|\ge 1$, we have 
\begin{align}\label{est4.proof.lem.strmfunc.L1}
\begin{split}
|\sigma_1(r,s,\zeta) h(s,\zeta)|
&\lesssim
|\zeta|^{-\frac12}
e^{2|\zeta|}
K_1(|\zeta|r)
s^{\frac12} e^{-|\zeta|s}
|h(s,\zeta)| \\
&\lesssim
|\zeta|^{-\frac12}
e^{2|\zeta|}
K_1(|\zeta|r)
s^{-\rho+\frac12} e^{-|\zeta|s}
\|h(\zeta)\|_{L^\infty_\rho}. 
\end{split}
\end{align}
For $i=2,3$, we have 
\begin{align}\label{est5.proof.lem.strmfunc.L1}
\begin{split}
|\sigma_2(r,s,\zeta) h(s,\zeta)|
&\lesssim 
|\zeta|^{-1}
r^{-\frac12} s^{\frac12} e^{-|\zeta|(r-s)}
|h(s,\zeta)| \\
&\lesssim
|\zeta|^{-1}
r^{-\frac12} s^{-\rho+\frac12} e^{-|\zeta|(r-s)}
\|h(\zeta)\|_{L^\infty_\rho},
\quad
s \le r 
\end{split}
\end{align}
and
\begin{align}\label{est6.proof.lem.strmfunc.L1}
\begin{split}
|\sigma_3(r,s,\zeta) h(s,\zeta)|
&\lesssim
|\zeta|^{-1}
r^{-\frac12} s^{\frac12} e^{|\zeta|(r-s)}
|h(s,\zeta)| \\
&\lesssim
|\zeta|^{-1}
r^{-\frac12} s^{-\rho+\frac12} e^{|\zeta|(r-s)}
\|h(\zeta)\|_{L^\infty_\rho}, 
\quad
r \le s. 
\end{split}
\end{align}
Then we see from \eqref{est1.proof.lem.strmfunc.L1} that, when $0<|\zeta|\le1$, 
\begin{align}\label{est7.proof.lem.strmfunc.L1}
\begin{split}
\int_{1}^{\infty}
|\sigma_1(r,s,\zeta) h(s,\zeta)|
\dd s 
&\lesssim
(|\zeta|+|\zeta|^{\rho})
K_1(|\zeta|r) 
\|h(\zeta)\|_{L^\infty_\rho} \\
&\lesssim
\left\{
\begin{array}{ll}
r^{-1} 
\|h(\zeta)\|_{L^\infty_\rho}
&\mbox{if}\ r \le |\zeta|^{-1} \\ [5pt]
|\zeta| e^{-|\zeta|r} 
\|h(\zeta)\|_{L^\infty_\rho}
&\mbox{if}\ r \ge |\zeta|^{-1} 
\end{array}\right.
\end{split}
\end{align}
and from \eqref{est4.proof.lem.strmfunc.L1} that, when $|\zeta|\ge1$, 
\begin{align}\label{est8.proof.lem.strmfunc.L1}
\begin{split}
\int_{1}^{\infty}
|\sigma_1(r,s,\zeta) h(s,\zeta)|
\dd s 
&\lesssim
|\zeta|^{-\frac32}
e^{|\zeta|}
K_1(|\zeta|r) 
\|h(\zeta)\|_{L^\infty_\rho} \\
&\lesssim
|\zeta|^{-2}
e^{-|\zeta|(r-1)}
\|h(\zeta)\|_{L^\infty_\rho}. 
\end{split}
\end{align}
For $i=2,3$, we see from \eqref{est2.proof.lem.strmfunc.L1}--\eqref{est3.proof.lem.strmfunc.L1} that, when $0<|\zeta|\le1$, 
\begin{align}\label{est9.proof.lem.strmfunc.L1}
\begin{split}
&\int_1^r
|\sigma_2(r,s,\zeta) h(s,\zeta)|
\dd s 
+ \int_{r}^{\infty}
|\sigma_3(r,s,\zeta) h(s,\zeta)|
\dd s \\
&\lesssim
\left\{
\begin{array}{ll}
(r^{-\rho+2} + |\zeta|^{\rho-2}) \|h(\zeta)\|_{L^\infty_\rho}
&\mbox{if}\ r \le |\zeta|^{-1} \\ [5pt]
(|\zeta|^{\rho-2} e^{-|\zeta|r} + |\zeta|^{-2} r^{-\rho})
\|h(\zeta)\|_{L^\infty_\rho}
&\mbox{if}\ r \ge |\zeta|^{-1} 
\end{array}\right.
\end{split}
\end{align}
and from \eqref{est5.proof.lem.strmfunc.L1}--\eqref{est6.proof.lem.strmfunc.L1} that, when $|\zeta|\ge1$, 
\begin{align}\label{est10.proof.lem.strmfunc.L1}
\begin{split}
&\int_{1}^{r}
|\sigma_2(r,s,\zeta) h(s,\zeta)|
\dd s
+ \int_{r}^{\infty}
|\sigma_3(r,s,\zeta) h(s,\zeta)|
\dd s \\
&\lesssim
|\zeta|^{-2} r^{-\rho}
\|h(\zeta)\|_{L^\infty_\rho}. 
\end{split}
\end{align}

Now the estimates \eqref{est7.proof.lem.strmfunc.L1}--\eqref{est10.proof.lem.strmfunc.L1} verify the formula \eqref{eq1.lem.strmfunc.L1}--\eqref{eq2.lem.strmfunc.L1} as discussed at the begging of the proof. Moreover, the assertion \eqref{est.lem.strmfunc.L1} follows from \eqref{est7.proof.lem.strmfunc.L1}--\eqref{est10.proof.lem.strmfunc.L1} when $j=0$, from \eqref{eq.rel} in Appendix \ref{app.lem} when $j=1$, and from the equation \eqref{eq.ode.strmfunc} when $j=2$.

It remains to prove \eqref{eq3.lem.strmfunc.L1}. By direct computation, we find 
\begin{align*}
\begin{split}
\frac{\dd}{\dd r} \hat{\psi}[h](1,\zeta)
&= 
-d[h](\zeta) 
|\zeta| \frac{\dd K_1}{\dd z}(|\zeta|) 
+ |\zeta| \frac{\dd I_1}{\dd z}(|\zeta|) 
\int_1^\infty 
K_1(|\zeta|s) s h(s) \dd s \\
&= 
\frac{d[h](\zeta)}{I_1(|\zeta|)}
|\zeta| \Big(K_1(|\zeta|) \frac{\dd I_1}{\dd z}(|\zeta|) - I_1(|\zeta|) \frac{\dd K_1}{\dd z}(|\zeta|)\Big), 
\quad \zeta\neq0. 
\end{split}
\end{align*}
This combined with \eqref{def.W} leads to \eqref{eq3.lem.strmfunc.L1}. The proof of Lemma \ref{lem.strmfunc.L1} is complete. 
\end{proof}
%

%
\begin{lemma}\label{lem.strmfunc.seq}
Let $2<\rho<3$ and $\omega\in \opFX(\R,L^\infty_{\rho})$ be given by, for some $\{b_m\}\in l^1(\Z,L^\infty_{\rho})$, 
$$
\hat{\omega} = \sum_{m\in\Z} b_m(r) \delta_m. 
$$
The solution $\hat{\psi}$ of \eqref{eq.ode.strmfunc} has the form 
\begin{align}\label{eq1.lem.strmfunc.seq}
\begin{split}
\hat{\psi} = \sum_{m\in\Z} \hat{\psi}[b_m](r) \delta_m 
\end{split}
\end{align}
with 
\begin{align}\label{eq2.lem.strmfunc.seq}
\begin{split}
\hat{\psi}[b_0](r)
&=
-d[b_0] r^{-1}
+ \frac{r^{-1}}{2} \int_1^r s^2 b_0(s) \dd s 
+ \frac{r}{2} \int_r^\infty b_0(s) \dd s, \\
d[b_0] 
&:=
\frac12 \int_{1}^{\infty} b_0(s) \dd s 
\end{split}
\end{align}
and 
\begin{align}\label{eq2'.lem.strmfunc.seq}
\begin{split}
\hat{\psi}[b_m](r)
&=
-d[b_m] K_1(|m|r) 
+ K_1(|m|r) 
\int_1^r 
I_1(|m|s) s b_m(s) \dd s \\ 
&\quad 
+ I_1(|m|r) 
\int_r^\infty 
K_1(|m|s) s b_m(s) \dd s,
\quad m\neq0, \\
d[b_m] 
&:=
\frac{I_{1}(|m|)}{K_{1}(|m|)}
\int_{1}^{\infty}
K_{1}(|m|s) s b_m(s) \dd s, 
\quad m\neq0. 
\end{split}
\end{align}
Moreover, $\hat{\psi}$ satisfies the estimate
\begin{align}\label{est.lem.strmfunc.seq}
\begin{split}
\Big\|
m
\mapsto
\Big(\frac{\dd}{\dd r}, \iu m\Big)^{j}
\hat{\psi}[b_m](r)
\Big\|_{l^1(L^\infty_{\rho-2+j})}
\lesssim
\|\{b_m\}\|_{l^1(L^\infty_{\rho})}, 
\quad j=0,1,2 
\end{split}
\end{align}
and the relation
\begin{align}\label{eq3.lem.strmfunc.seq}
\begin{split}
\frac{\dd}{\dd r} \hat{\psi}[b_m](1)
= 
\left\{
\begin{array}{ll}
2d[b_0]
&\mbox{if}\ m=0 \\ [5pt]
\dfrac{d[b_m]}{I_1(|m|)} 
\quad 
&\mbox{if}\ m\neq0.  
\end{array}\right.
\end{split}
\end{align}
The implicit constant depends on $\rho$.
\end{lemma}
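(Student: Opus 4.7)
The proof I propose runs parallel to that of Lemma \ref{lem.strmfunc.L1}, with the integral in $\zeta$ replaced by a sum over $m\in\Z$, and with the mode $m=0$ treated separately since the modified Bessel function formulas are not available there. The starting point is the general representation \eqref{rep.psi1}--\eqref{rep.psi3}: when $\hat\omega = \sum_{m\in\Z} b_m(r)\delta_m$, the measure $\hat\omega\lfloor\sigma_i(r,s,\cdot)$ collapses to $\sum_m \sigma_i(r,s,m) b_m(s)\delta_m$, and the three $s$-integrals in \eqref{rep.psi1} may be exchanged with the sum provided the pointwise estimates established below hold. This will justify writing $\hat\psi$ in the form \eqref{eq1.lem.strmfunc.seq} and identify $\hat\psi[b_m]$ as in \eqref{eq2.lem.strmfunc.seq}--\eqref{eq2'.lem.strmfunc.seq}: for $m=0$ one uses the $\zeta=0$ branch of \eqref{rep.psi3}, producing the $r^{-1}$, $r^{-1}s^2$, $r$ kernels, and the constant $d[b_0] = \tfrac12\int_1^\infty b_0(s)\,ds$ is forced by the boundary condition $\hat\psi[b_0](1)=0$; for $m\neq 0$ one uses the modified Bessel branch verbatim with $|\zeta|$ replaced by $|m|$.

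For the estimate \eqref{est.lem.strmfunc.seq}, the key observation is that any nonzero integer $m$ satisfies $|m|\ge 1$, so the estimates \eqref{est8.proof.lem.strmfunc.L1} and \eqref{est10.proof.lem.strmfunc.L1} (the $|\zeta|\ge 1$ regime) apply with $\zeta=m$. They yield, uniformly in $r\ge 1$,
\[
r^{\rho-2}|\hat\psi[b_m](r)| \lesssim |m|^{-2}\,\|b_m\|_{L^\infty_\rho}, \qquad m\neq 0,
\]
so the $m\neq 0$ contribution to $l^1(\Z,L^\infty_{\rho-2})$ is even more than summable. For the mode $m=0$, direct estimation of the three terms in \eqref{eq2.lem.strmfunc.seq} using $|b_0(s)|\le s^{-\rho}\|b_0\|_{L^\infty_\rho}$ and the conditions $2<\rho<3$ gives $|d[b_0]|\lesssim \|b_0\|_{L^\infty_\rho}$ (here $\rho>1$), $\tfrac12 r^{-1}\int_1^r s^{2-\rho}\,ds \lesssim r^{2-\rho}$ (here $\rho<3$), and $\tfrac12 r\int_r^\infty s^{-\rho}\,ds \lesssim r^{2-\rho}$, so $\|\hat\psi[b_0]\|_{L^\infty_{\rho-2}} \lesssim \|b_0\|_{L^\infty_\rho}$. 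Summing over $m$ establishes the $j=0$ case of \eqref{est.lem.strmfunc.seq}; the cases $j=1$ and $j=2$ follow, respectively, from the derivative formula \eqref{eq.rel} for the modified Bessel functions (applied to the $m\neq 0$ modes, with an explicit differentiation for $m=0$) and from using the equation \eqref{eq.ode.strmfunc} to trade second derivatives for lower-order terms, exactly as at the end of the proof of Lemma \ref{lem.strmfunc.L1}.

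The boundary relation \eqref{eq3.lem.strmfunc.seq} is then verified by direct differentiation of \eqref{eq2.lem.strmfunc.seq}--\eqref{eq2'.lem.strmfunc.seq} at $r=1$. For $m\neq 0$ the computation is identical to that yielding \eqref{eq3.lem.strmfunc.L1}, using the Wronskian identity \eqref{def.W}. For $m=0$, the interior derivative of the two integral terms produces cancelling boundary contributions, leaving
\[
\frac{\dd}{\dd r}\hat\psi[b_0](1) = d[b_0] + \tfrac12\int_1^\infty b_0(s)\,\dd s = 2d[b_0].
\]

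The proof is mostly bookkeeping once Lemma \ref{lem.strmfunc.L1} is in hand; the only genuinely new work is the $m=0$ case, where the Bessel framework is replaced by the explicit solutions $r^{-1}, r$, and the exponent window $2<\rho<3$ is needed precisely to ensure that the three terms in \eqref{eq2.lem.strmfunc.seq} all land in $L^\infty_{\rho-2}$. I expect no genuine obstacle; the principal care is to keep the constants uniform in $m$ so that the $l^1$ summation in $m$ closes with the same structure as the $L^1$ integration in $\zeta$ did for the absolutely continuous part.
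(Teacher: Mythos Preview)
Your proposal is correct and follows essentially the same route as the paper: derive the representation from \eqref{rep.psi1}--\eqref{rep.psi3}, handle $m=0$ by the explicit $r^{\pm1}$ kernels and the window $2<\rho<3$, import the $|\zeta|\ge1$ estimates of Lemma~\ref{lem.strmfunc.L1} for $m\neq0$ (uniformly in $m$), pass to $j=1,2$ via \eqref{eq.rel} and the equation \eqref{eq.ode.strmfunc}, and verify \eqref{eq3.lem.strmfunc.seq} by direct differentiation and the Wronskian. The paper's proof is terser but structurally identical.
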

%
%
\begin{proof}
The formula \eqref{eq1.lem.strmfunc.seq}--\eqref{eq2'.lem.strmfunc.seq} can be derived from \eqref{rep.psi1}--\eqref{rep.psi3}. Thus we focus on the proof of \eqref{est.lem.strmfunc.seq} and \eqref{eq3.lem.strmfunc.seq}. By a simple calculation 
$$
r^{-1} \int_{1}^{r} s^{-\rho+2} \dd s
+ r \int_{r}^{\infty} s^{-\rho} \dd s 
\lesssim
r^{-\rho+2}, 
$$
we have 
\begin{align}\label{est1.proof.lem.strmfunc.seq}
\begin{split}
\Big\|
\frac{\dd^j}{\dd r^j} \hat{\psi}[b_0]
\Big\|_{L^\infty_{\rho-2+j}}
\lesssim
\|b_0\|_{L^\infty_{\rho}}, 
\quad j=0,1,2. 
\end{split}
\end{align}
By adopting the proof of Lemma \ref{lem.strmfunc.L1}, we have 
\begin{align}\label{est2.proof.lem.strmfunc.seq}
\begin{split}
\Big\|
\Big(\frac{\dd}{\dd r}, \iu m\Big)^{j}
\hat{\psi}[b_m]
\Big\|_{L^\infty_{\rho-2+j}}
\lesssim
\|b_m\|_{L^\infty_{\rho}}, 
\quad m\neq0, 
\quad j=0,1,2, 
\end{split}
\end{align}
with an implicit constant independent of $m$. Then the estimate \eqref{est.lem.strmfunc.seq} is obtained from \eqref{est1.proof.lem.strmfunc.seq}--\eqref{est2.proof.lem.strmfunc.seq}. The relation \eqref{eq3.lem.strmfunc.seq} when $m=0$ follows from direct computation. The one when $m\neq0$ follows from the proof of \eqref{eq3.lem.strmfunc.L1} in Lemma \ref{lem.strmfunc.L1}. This completes the proof. 
\end{proof}
%

A corollary of Lemmas \ref{lem.strmfunc.L1}--\ref{lem.strmfunc.seq} is the well-definedness of the formula \eqref{def.ABSL}.
%
\begin{corollary}\label{cor.est.BSL}
Let $2<\rho<3$ and $\omega \in \opFX(\R,L^\infty_{\rho})$. The vector field $\mathcal{V}$ in \eqref{def.ABSL} defined from the solution $\psi\in \opFX(\R,L^\infty_{\rho-2})$ of \eqref{eq.BSL} is well-defined. Moreover, $\mathcal{V}$ satisfies 
\begin{align}\label{est.cor.est.BSL}
\begin{split}
\|
(\partial_r,\partial_z)^{j} \mathcal{V}
\|_{\opFX(L^\infty_{\rho-1+j})}
\lesssim
\|\omega\|_{\opFX(L^\infty_{\rho})}, 
\quad j=0,1. 
\end{split}
\end{align}
The implicit constant depends on $\rho$. 
\end{corollary}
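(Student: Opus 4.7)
The strategy is to reduce the proof of Corollary \ref{cor.est.BSL} to Lemmas \ref{lem.strmfunc.L1} and \ref{lem.strmfunc.seq}, which already control all the derivatives of $\hat\psi$ that will arise. I first decompose $\hat\omega = h\,\dd\Lambda + \sum_{m\in\Z} b_m\delta_m$ with $\|h\|_{L^1(L^\infty_\rho)} + \sum_m \|b_m\|_{L^\infty_\rho} = \|\omega\|_{\opFX(L^\infty_\rho)}$. By linearity of \eqref{eq.ode.strmfunc}, the solution $\hat\psi$ splits as $\hat\psi[h]\,\dd\Lambda + \sum_m \hat\psi[b_m]\delta_m$, which retains the structural form required for membership in $\opX$. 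The same structural form is preserved under multiplication by $\iu\zeta$ (resp.\ $\iu m$), differentiation in $r$, and multiplication by powers of $r^{-1}$; these are the only operations I need to match every entry of \eqref{est.cor.est.BSL}.

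\textbf{The $j=0$ bound.} Taking the Fourier transform of \eqref{def.ABSL} yields
\begin{align*}
\widehat{\mathcal{V}^r}(r,\zeta) = -\iu\zeta\,\hat\psi(r,\zeta),
\qquad
\widehat{\mathcal{V}^z}(r,\zeta) = \partial_r\hat\psi(r,\zeta) + r^{-1}\hat\psi(r,\zeta).
\end{align*}
The terms $\iu\zeta\hat\psi$ and $\partial_r\hat\psi$ are controlled at weight $\rho-1$ by the $j=1$ cases of \eqref{est.lem.strmfunc.L1} and \eqref{est.lem.strmfunc.seq}, applied respectively to the continuous and discrete pieces of the decomposition. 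For the $r^{-1}\hat\psi$ term I use the $j=0$ bounds, which control $\hat\psi$ at weight $\rho-2$; since multiplication by $r^{-1}$ on $(1,\infty)$ maps $L^\infty_{\rho-2}$ continuously into $L^\infty_{\rho-1}$ pointwise in $\zeta$ (resp.\ $m$), the same bound transfers. Summing the two pieces yields \eqref{est.cor.est.BSL} for $j=0$.

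\textbf{The $j=1$ bound and the main obstacle.} For first derivatives of $\mathcal{V}$ the Fourier counterparts are linear combinations of $\partial_r^2\hat\psi$, $\iu\zeta\,\partial_r\hat\psi$, $-\zeta^2\hat\psi$, $r^{-1}\partial_r\hat\psi$, $r^{-1}\iu\zeta\hat\psi$, and $r^{-2}\hat\psi$. The first three are exactly the quantities bounded at weight $\rho$ by the $j=2$ cases of the two lemmas; the next two follow from the $j=1$ bounds together with pointwise multiplication by $r^{-1}$, which lifts the weight from $\rho-1$ to $\rho$; the last is obtained from the $j=0$ bound and multiplication by $r^{-2}$, which lifts the weight from $\rho-2$ to $\rho$. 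Summing the continuous and discrete contributions then gives \eqref{est.cor.est.BSL} for $j=1$. There is no genuinely analytical obstacle once the two lemmas are in hand; the only minor point requiring care is to observe that each of the algebraic manipulations above preserves membership in $\opX(\R,L^\infty_{\rho-1+j})$, and not merely boundedness in the weaker space $\opBUC(\R,L^\infty_{\rho-1+j})$. This is immediate because multiplying by $\iu\zeta$, differentiating in $r$, or multiplying by a function of $r$ alone all commute with the splitting into an absolutely continuous part and a sum of Dirac masses in $\zeta$, so the output is visibly of the form required by \eqref{intro.eq.X}.
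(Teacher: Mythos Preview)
Your proof is correct and follows essentially the same approach as the paper: decompose $\hat\omega$ into its absolutely continuous and discrete parts, apply Lemmas \ref{lem.strmfunc.L1} and \ref{lem.strmfunc.seq} to each, and read off the estimate \eqref{est.cor.est.BSL} from the derivative bounds \eqref{est.lem.strmfunc.L1} and \eqref{est.lem.strmfunc.seq}. The paper's proof is simply terser, stating only that the estimate follows from those two lemmas, whereas you have spelled out explicitly which terms of $\widehat{\mathcal V}$ and its derivatives are matched by which case $j\in\{0,1,2\}$ of the streamfunction estimates.
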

%
%
\begin{proof}
Denote
$$
\hat{\omega}
= h(r,\zeta) \dd \Lambda + \sum_{m\in\Z} b_m(r) \delta_m. 
$$
Lemmas \ref{lem.strmfunc.L1}--\ref{lem.strmfunc.seq} show that the inverse Fourier transform of 
$$
\hat{\psi}
= \hat{\psi}[h](r,\zeta) \dd \Lambda + \sum_{m\in\Z} \hat{\psi}[b_m](r) \delta_m, 
$$
where $\hat{\psi}[h]$ and $\hat{\psi}[b_m]$ are the functions in the lemmas, is a unique solution of \eqref{eq.BSL} in $\opFX(\R,L^\infty_{\rho-2})$. Hence the vector field $\mathcal{V}$ in \eqref{def.ABSL}, that is, in 
\begin{align}\label{def'.BSL}
\begin{split}
&\mathcal{V}
= \mathcal{V}^r(r,z) {\bf e}_r  
+ \mathcal{V}^z(r,z) {\bf e}_z, \\
&\mathcal{V}^r(r,z)
:= \mathcal{F}^{-1}[\zeta \mapsto -\iu \zeta \hat{\psi}(r,\zeta)], \\
&\mathcal{V}^z(r,z)
:= \mathcal{F}^{-1}\Big[\zeta \mapsto \frac{1}{r} \frac{\dd}{\dd r}(r\hat{\psi}(r,\zeta))\Big] 
\end{split}
\end{align}
satisfies the estimate \eqref{est.cor.est.BSL} thanks to \eqref{est.lem.strmfunc.L1} and \eqref{est.lem.strmfunc.seq}. The proof is complete. 
\end{proof}
%

\subsection{Applications}
\label{subsec.app}

We give two applications of the axisymmetric Biot-Savart law \eqref{def.ABSL}. The first one concerns a representation of an axisymmetric vector field without swirl.
%
\begin{proposition}\label{prop.BSL}
Let $2<\rho<3$ and $v\in \opFX(\R,L^\infty_{\rho-1})^3\cap W^{1,2}_{{\rm loc}}(\overline{\Omega})^3$ have no swirl. Suppose that $\omega := (\oprot v)^\theta\in \opFX(\R,L^\infty_{\rho})$. Then the following hold.
\begin{enumerate}[(1)]
\item\label{item1.lem.BSL}
If $\opdiv v=0$ and $v=0$ on $\{1\}\times\R$, then $v=\mathcal{V}[\omega]$.

\item\label{item2.lem.BSL}
If, in addition, $\hat{\omega}$ is given by 
$$
\hat{\omega} = h(r,\zeta) \dd\Lambda + \sum_{m\in\Z} b_m(r) \delta_m, 
$$
then 
$$
d[h]=0 
\qquad \text{and} \qquad
d[b_m]=0, 
\quad
m\in \Z. 
$$
\end{enumerate}
\end{proposition}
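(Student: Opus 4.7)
The plan is to attach a streamfunction $\psi$ to $v$ itself, verify that it solves the Dirichlet problem \eqref{eq.BSL} with right-hand side $\omega$, and then use uniqueness to identify $\psi$ with the Biot-Savart streamfunction $\psi[\omega]$ of Lemmas~\ref{lem.strmfunc.L1}--\ref{lem.strmfunc.seq}. Since $v$ is axisymmetric, divergence-free and has no swirl, the relation $\opdiv v = \frac{1}{r}\partial_r(rv^r)+\partial_z v^z = 0$ makes the 1-form $(rv^z)\,dr - (rv^r)\,dz$ closed on the simply-connected half-plane $(1,\infty)\times\R$. Hence there exists $\Psi$ with $\partial_r\Psi = rv^z$ and $\partial_z\Psi = -rv^r$, unique up to an additive constant; setting $\psi := \Psi/r$ yields the standard Stokes-streamfunction relations $v^r = -\partial_z\psi$ and $v^z = \frac{1}{r}\partial_r(r\psi)$, with residual gauge $\psi \mapsto \psi + c/r$ that preserves $v$ and lies in the kernel of $\Delta - r^{-2}$. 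Using $(\oprot v)^\theta = \partial_z v^r - \partial_r v^z$ from \eqref{formula1} a short computation gives $-(\Delta - r^{-2})\psi = \omega$. The condition $v^r(1,z) = -\partial_z\psi(1,z) = 0$ makes $\psi(1,z)$ constant in $z$, and I fix the gauge by normalizing $\psi(1,z) = 0$; since then $v^z(1,z) = \partial_r\psi(1,z)+\psi(1,z) = \partial_r\psi(1,z)$ on $r = 1$, the second boundary condition $v^z(1,z) = 0$ gives $\partial_r\psi(1,z) = 0$ as a bonus.

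At this point $\psi$ solves \eqref{eq.BSL} with right-hand side $\omega$, and part \eqref{item1.lem.BSL} reduces to invoking the uniqueness statement for \eqref{eq.ode.strmfunc} recorded after \eqref{rep.psi3}. To apply it I must verify that $\hat\psi \in \bigcup_{\rho\ge 0}\opX(\R, L^\infty_\rho)$. This is carried out mode by mode after Fourier transform in $z$: for $\zeta \neq 0$, the relation $\hat{v^r}(r,\zeta) = -\iu\zeta\,\hat\psi(r,\zeta)$ recovers $\hat\psi$ as $\hat{v^r}/(-\iu\zeta)$, and the decay $v \in \opFX(L^\infty_{\rho-1})$ with $\rho > 2$ gives the required control, in the spirit of Corollary~\ref{cor.est.BSL}. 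The potentially problematic zero-frequency Dirac component of $\hat{v^r}$ must actually vanish: the divergence-free equation restricted to $\zeta = 0$ gives $\partial_r(r\,a_0^r) = 0$, hence $a_0^r = c/r$, and then the boundary condition $v^r(1,z) = 0$ forces $c = 0$. The gauge normalization $\psi(1,z) = 0$ in turn pins down the zero mode of $\hat\psi$. Uniqueness of \eqref{eq.ode.strmfunc} in the class $\bigcup_{\rho\ge 0}\opX(\R, L^\infty_\rho)$ then identifies $\psi$ with $\psi[\omega]$, yielding $v = \mathcal V[\omega]$.

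For part \eqref{item2.lem.BSL}, I read the extra boundary identity $\partial_r\psi(1,z) = 0$ derived in the first paragraph through the explicit formulas of the lemmas. Taking Fourier transform in $z$, the equation $\partial_r\hat\psi(1,\zeta) \equiv 0$ holds as an element of $\opX(\R, L^\infty_0)$. Comparing its absolutely continuous part with \eqref{eq3.lem.strmfunc.L1} gives $d[h](\zeta)/I_1(|\zeta|) = 0$ for a.e.\ $\zeta$, and comparing its Dirac components with \eqref{eq3.lem.strmfunc.seq} gives $2d[b_0] = 0$ and $d[b_m]/I_1(|m|) = 0$ for every $m \neq 0$. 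Since $I_1 > 0$ on $(0,\infty)$, these conclude $d[h] \equiv 0$ and $d[b_m] = 0$ for every $m \in \Z$. The main obstacle I anticipate is the $\opFX$-regularity verification in the middle paragraph: the naive inversion $\hat\psi = \hat{v^r}/(-\iu\zeta)$ is singular at $\zeta = 0$, so the vanishing of the zero-frequency Dirac of $\hat{v^r}$, and the consistent definition of $\hat\psi$ across frequencies by combining the boundary condition on $r=1$, the divergence-free condition, and the decay at horizontal infinity, have to be handled with care before the uniqueness of \eqref{eq.ode.strmfunc} can be applied.
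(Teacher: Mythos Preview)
Your strategy---construct a streamfunction $\psi$ for $v$ directly, verify that $\hat\psi$ lies in $\bigcup_{\rho\ge0}\opX(\R,L^\infty_\rho)$, and then invoke uniqueness for \eqref{eq.ode.strmfunc}---is different from the paper's route, and the regularity step you flag as an obstacle is indeed a genuine gap in what you wrote. The inversion $\hat\psi=\hat{v^r}/(-\iu\zeta)$ fails for the absolutely continuous part: if $\hat{v^r}=g^r\,\dd\Lambda+\cdots$ with $g^r\in L^1(\R,L^\infty_{\rho-1})$, there is no reason for $\zeta\mapsto g^r(\cdot,\zeta)/\zeta$ to remain $L^1$ near $\zeta=0$; your zero-Dirac argument handles only the discrete part. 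The clean fix is to recover $\hat\psi$ from $\hat{v^z}$ instead: set $\hat\psi(r,\zeta)=r^{-1}\int_1^r s\,\hat{v^z}(s,\zeta)\,\dd s$, which lands in $\opX(\R,L^\infty_{\rho-2})$ by a direct estimate (here $2<\rho<3$ is used), satisfies $\hat\psi(1,\cdot)=0$, and recovers $\hat{v^r}=-\iu\zeta\hat\psi$ via the divergence constraint and $\hat{v^r}(1,\cdot)=0$. Once this is in place, your argument goes through.

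The paper avoids this verification altogether by a different device. It sets $u:=v-\mathcal{V}[\omega]$ and shows $u=0$. Since $\opdiv u=0$, $\oprot u=0$, and $u^r|_{r=1}=0$, the vector field $u$ is harmonic; by \eqref{formula2} the radial component then satisfies $(\Delta-r^{-2})u^r=0$ with zero Dirichlet data, so $\hat u^r$ solves \eqref{eq.ode.strmfunc} with right-hand side zero. The point is that $\hat u^r$ is \emph{already} in $\opX(\R,L^\infty_{\rho-1})$ by hypothesis and Corollary~\ref{cor.est.BSL}, so uniqueness gives $u^r=0$ with no streamfunction construction needed. Then $\partial_z u^z=\opdiv u=0$ and $\partial_r u^z=-(\oprot u)^\theta=0$ force $u^z$ to be constant, hence zero by decay. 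Part~\eqref{item2.lem.BSL} is then read off exactly as you propose, from $\mathcal V^z|_{r=1}=v^z|_{r=1}=0$ and the relations \eqref{eq3.lem.strmfunc.L1}, \eqref{eq3.lem.strmfunc.seq}. The paper's trick---that the \emph{difference} $u^r$ obeys the same ODE as the streamfunction---buys the regularity for free; your approach trades this for an explicit construction, which is fine once the $r$-integration of $\hat{v^z}$ replaces the division by $\zeta$.
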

%
%
\begin{proof}
(\ref{item1.lem.BSL})
We will show that $u:=v-\mathcal{V}[\omega]\in \opFX(\R,L^\infty_{\rho-1})^3$ is zero. By definition, 
\begin{align*}
\begin{split}
&\opdiv u = 0, 
\qquad
\oprot u = 0, 
\qquad 
u^r|_{\{1\}\times\R} = 0. 
\end{split}
\end{align*}
In particular, $\Delta u=0$ in the sense of distributions. From \eqref{formula2}, we see that $u^r$ solves 
\begin{equation*}
\left\{
\begin{array}{ll}
-\Big(
\Delta - \dfrac{1}{r^2}
\Big) u^r
= 
0
&\mbox{in}\ (1,\infty)\times\R \\ [10pt] 
u^r=0
&\mbox{on}\ \{1\}\times\R. 
\end{array}\right.
\end{equation*}
Moreover, the Fourier transform $\hat{u}^r$ solves \eqref{eq.ode.strmfunc} with 
$\hat{\omega}=0$. Thanks to $\hat{u}^r\in \opX(\R,L^\infty_{\rho-1})$, the formula \eqref{rep.psi1}--\eqref{rep.psi3} gives $\hat{u}^r=0$ and hence $u^r=0$. Now, from 
\begin{align*}
\begin{split}
\partial_z u^z = \opdiv u = 0, 
\qquad
\partial_r u^z = -\oprot u = 0, 
\end{split}
\end{align*}
we see that $u^z$ is a constant. By $u^z\in \opFX(\R,L^\infty_{\rho-1})$, we have $u^z=0$ and hence $u=0$.

(\ref{item2.lem.BSL})
Recall that $\mathcal{V}$ is defined from the inverse Fourier transform of 
$$
\hat{\psi}
= \hat{\psi}[h](r,\zeta) \dd \Lambda + \sum_{m\in\Z} \hat{\psi}[b_m](r) \delta_m 
$$
with the functions $\hat{\psi}[h]$ and $\hat{\psi}[b_m]$ in Lemmas \ref{lem.strmfunc.L1}--\ref{lem.strmfunc.seq}. Since $\mathcal{V}^z=v^z=0$ on $\{1\}\times\R$ because of (\ref{item1.lem.BSL}) above, the definition of $\mathcal{V}^z$ in \eqref{def.ABSL}, or \eqref{def'.BSL}, leads to 
\begin{align*}
\begin{split}
\frac{\dd \hat{\psi}}{\dd r}(1,\zeta)
= \frac{1}{r} \frac{\dd}{\dd r} (r\hat{\psi}(r,\zeta)) \Big|_{\{1\}\times\R}
= \mathcal{F}\big[\mathcal{V}^z|_{\{1\}\times\R}\big]
= 0.
\end{split}
\end{align*}
Hence the assertion follows from the relations \eqref{eq3.lem.strmfunc.L1} and \eqref{eq3.lem.strmfunc.seq} in Lemmas \ref{lem.strmfunc.L1}--\ref{lem.strmfunc.seq}. 
\end{proof}
%

The second one concerns the existence of potentials for a rotation-free vector field.
%
\begin{proposition}\label{prop.pot}
Let $f\in L^2_{{\rm loc}}(\overline{\Omega})$ be axisymmetric and have no swirl. If $\oprot f=0$ in the sense of distributions, there is an axisymmetric $p\in W^{1,2}_{{\rm loc}}(\overline{\Omega})$ such that 
$$
f
=\nabla p
=\partial_r p {\bf e}_r + \partial_z p {\bf e}_z.
$$
\end{proposition}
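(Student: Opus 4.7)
The plan is to reduce the statement to a standard two-dimensional Poincaré-lemma argument on the meridional half-plane $H := (1,\infty)\times\R$, which is simply connected. Since $f$ is axisymmetric with no swirl, write $f = f^r(r,z)\,{\bf e}_r + f^z(r,z)\,{\bf e}_z$, and the formula \eqref{formula1} with $f^\theta = 0$ collapses to
\begin{align*}
\oprot f = (\partial_z f^r - \partial_r f^z)\,{\bf e}_\theta.
\end{align*}
Thus $\oprot f = 0$ in $\mathcal{D}'(\Omega)$ is equivalent to the scalar closedness $\partial_z f^r = \partial_r f^z$ in $\mathcal{D}'(H)$. Moreover $f^r, f^z \in L^2_{\rm loc}(H)$ because the weight $r$ is bounded above and away from zero on compact subsets of $\overline{H}$. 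The task then reduces to constructing $p \in W^{1,2}_{\rm loc}(H)$ with $\partial_r p = f^r$ and $\partial_z p = f^z$.

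To handle the low regularity, I would proceed by mollification. Let $\eta_\epsilon$ be a standard mollifier on $\R^2$ and define $f^r_\epsilon, f^z_\epsilon$ by convolution in the $(r,z)$-variables, restricted to $H_\epsilon := (1+\epsilon, \infty)\times\R$. These regularizations are smooth and still satisfy $\partial_z f^r_\epsilon = \partial_r f^z_\epsilon$, so the classical Poincaré lemma on the simply connected domain $H_\epsilon$ gives the smooth potential
\begin{align*}
p_\epsilon(r,z)
:= \int_{1+\epsilon}^{r} f^r_\epsilon(s,z)\,\dd s
+ \int_{0}^{z} f^z_\epsilon(1+\epsilon,\tau)\,\dd \tau,
\end{align*}
with $\nabla p_\epsilon = (f^r_\epsilon, f^z_\epsilon)$; path-independence here is precisely the closedness condition. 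After normalizing $p_\epsilon$ by a constant so that its mean over a fixed $K_0 \Subset H$ vanishes, the Poincar\'{e}-Wirtinger inequality on any bounded Lipschitz $K_j$ with $K_0 \subset K_j \Subset H$ gives
\begin{align*}
\|p_\epsilon - p_{\epsilon'}\|_{W^{1,2}(K_j)}
\lesssim \|(f^r_\epsilon, f^z_\epsilon) - (f^r_{\epsilon'}, f^z_{\epsilon'})\|_{L^2(K_j)} \to 0
\end{align*}
as $\epsilon, \epsilon' \to 0$. Hence $p_\epsilon \to p$ in $W^{1,2}_{\rm loc}(H)$ with $\partial_r p = f^r$ and $\partial_z p = f^z$.

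To lift back to $\overline{\Omega}$, I would set $p(x,z) := p(|x|, z)$. Since the weight $r$ is bounded above and below on any compact subset of $\overline{\Omega}$, the containment $p \in W^{1,2}_{\rm loc}(H)$ upgrades to $p \in W^{1,2}_{\rm loc}(\overline{\Omega})$; and because $p$ is $\theta$-independent, the three-dimensional gradient is $\nabla p = \partial_r p\,{\bf e}_r + \partial_z p\,{\bf e}_z = f$, as required. The only delicate point is the Poincar\'{e}-lemma step at the level of $L^2_{\rm loc}$ data, and the mollification plus Poincar\'{e}-Wirtinger argument above is the cleanest route I see around it.
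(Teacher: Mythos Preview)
Your approach is correct in spirit and takes a genuinely different route from the paper. You reduce to a two-dimensional Poincar\'e lemma on the simply connected half-plane $H=(1,\infty)\times\R$, handling the low regularity by mollification and a Poincar\'e--Wirtinger Cauchy argument. The paper instead invokes the de~Rham-type criterion from Sohr's book: it suffices to show $\langle f,\varphi\rangle=0$ for every $\varphi\in C^\infty_{0,\sigma}(\Omega)$, and this is verified via the axisymmetric Biot--Savart law of Proposition~\ref{prop.BSL}. Concretely, the $\theta$-average $\widetilde\varphi$ of a divergence-free test function is written as $\mathcal V[\omega]$ for a streamfunction $\psi$ that is shown to be smooth and compactly supported in $(1,\infty)\times\R$; the pairing $\langle f,\varphi\rangle$ then reduces to the distributional $\theta$-component of $\oprot f$ tested against $\psi$, which vanishes by hypothesis. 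Your argument is more elementary and entirely self-contained; the paper's argument is designed to exercise the Biot--Savart framework built in Section~\ref{sec.ABSL} and to obtain the up-to-boundary regularity directly from the cited lemmas.

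One small gap worth flagging: your Poincar\'e--Wirtinger step runs over $K_j\Subset H$ and therefore only yields $p\in W^{1,2}_{\rm loc}(H)$, whereas the statement requires $p\in W^{1,2}_{\rm loc}(\overline\Omega)$, i.e.\ regularity up to $\{r=1\}$. The boundedness of the Jacobian weight $r$ is not what closes this; rather, you should use that $\nabla p=(f^r,f^z)\in L^2_{\rm loc}(\overline H)$ (which you already have from $f\in L^2_{\rm loc}(\overline\Omega)$) and then a one-dimensional integration in $r$ from a fixed interior slice shows that $\|p(r_1,\cdot)\|_{L^2(-Z,Z)}$ stays bounded as $r_1\downarrow 1$. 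This is routine, but the sentence ``the containment $p\in W^{1,2}_{\rm loc}(H)$ upgrades to $p\in W^{1,2}_{\rm loc}(\overline\Omega)$'' as written is not justified by the reason you give.
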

%
%
\begin{proof}
To apply \cite[Chapter $\mathrm{I}\hspace{-1.2pt}\mathrm{I}$, Lemmas 1.1.5 and 2.2.1]{Sohbook2013}, we will show that $\langle f, \varphi \rangle=0$ for all $\varphi\in C^\infty_{0,\sigma}(\Omega)$. Remark that, when $f$ is smooth, integration by parts gives
\begin{align}\label{eq1.proof.prop.pot}
\begin{split}
&\langle
\oprot f, \chi {\bf e}_\theta 
\rangle 
=
\langle
(\oprot f)^\theta, 
\chi
\rangle \\
&=
\int_{-\infty}^\infty 
\int_1^\infty 
(\partial_z f^r(r,z) - \partial_r f^z(r,z))
\overline{\underline{\chi}(r,z)}
r \dd r \dd z \\
&=
\int_{-\infty}^{\infty}
\int_{1}^{\infty}
\Big(
f^r(r,z) 
\overline{(-\partial_z \underline{\chi}(r,z))}
+ f^z(r,z) 
\overline{\frac{1}{r} \partial_r (r\underline{\chi}(r,z))}
\Big)
r \dd r \dd z 
\end{split}
\end{align}
for all $\chi\in C^\infty_0(\Omega)$ and $\underline{\chi}(r,z):=\int_{0}^{2\pi}\chi(r,\theta,z)\dd\theta$.

Take $\varphi\in C^\infty_{0,\sigma}(\Omega)$. Since $f$ is axisymmetric and has no swirl, the vector field 
$$
\widetilde{\varphi}
= \underline{\varphi^r}(r,z) {\bf e}_r + \underline{\varphi^z}(r,z) {\bf e}_z 
$$ 
satisfies 
\begin{align}\label{eq2.proof.prop.pot}
\langle f, \varphi \rangle
= \langle f, \widetilde{\varphi} \rangle. 
\end{align}
Set $\omega=(\oprot\widetilde{\varphi})^\theta$. Then the Biot-Savart law in Proposition \ref{prop.BSL} shows that 
\begin{align}\label{eq3.proof.prop.pot}
\widetilde{\varphi} = \mathcal{V}
\end{align}
with $\mathcal{V}=\mathcal{V}[\omega]$ in \eqref{def.ABSL} defined from $\psi$ solving the equation \eqref{eq.BSL}.

We claim that $\psi=\psi(r,z)$ is smooth and compactly supported in $(1,\infty)\times\R$. Indeed, $\psi$ is smooth thanks to elliptic regularity for \eqref{eq.BSL}. Set
\begin{align*}
\begin{split}
S_{l}=\{(r,z)\in(1,\infty)\times\R~|~r\ge l \mkern9mu \text{or} \mkern9mu |z|\ge l\}, 
\quad l\ge 0. 
\end{split}
\end{align*}
Then, by \eqref{def.ABSL} and \eqref{eq3.proof.prop.pot}, there is a sufficiently large $L>0$ such that 
\begin{align*}
\begin{split}
\partial_z\psi(r,z) 
=
\partial_r(r\psi(r,z))
=0, 
\quad 
(r,z)\in S_{L}. 
\end{split}
\end{align*}
Then we find that, for some constant $c>0$, 
\begin{align*}
\begin{split}
\psi(r,z) = \psi(r) = \frac{c}{r}, 
\quad 
(r,z)\in S_{L}. 
\end{split}
\end{align*}
Since $\psi(1,z)=0$, putting $r=1$ and $|z|=L$ yields that $c=0$. Consequently, we have 
\begin{align*}
\begin{split}
\psi(r,z) = 0, 
\quad 
(r,z)\in S_{L}. 
\end{split}
\end{align*}
Thus the claim follows. We may regard $\psi$ as a function belonging to $C^\infty_0(\Omega)$.

Now we have, by \eqref{eq2.proof.prop.pot}--\eqref{eq3.proof.prop.pot} and the condition that $\oprot f=0$ with \eqref{eq1.proof.prop.pot}, 
\begin{align*}
\begin{split}
\langle f, \varphi \rangle
&= \langle f, \mathcal{V} \rangle \\
&=
2\pi \int_{-\infty}^{\infty}
\int_{1}^{\infty}
\Big(
f^r(r,z) 
\overline{(-\partial_z \psi(r,z))}
+ f^z(r,z) 
\overline{\frac{1}{r} \partial_r (r\psi(r,z))}
\Big)
r \dd r \dd z \\
&= 
0. 
\end{split}
\end{align*}
Therefore, by \cite[Chapter $\mathrm{I}\hspace{-1.2pt}\mathrm{I}$, Lemmas 1.1.5 and 2.2.1]{Sohbook2013}, we have $f=\nabla p$ for some $p\in W^{1,2}_{{\rm loc}}(\overline{\Omega})$. Hence the assertion follows since $f$ is axisymmetric and has no swirl. 
\end{proof}
%

\section{Linearized problem}
\label{sec.LP}

In this section, we consider the linearized problem of \eqref{intro.eq.NP}
\begin{equation}\tag{LP}\label{eq.LP}
\left\{
\begin{array}{ll}
-\Big(
\Delta - \dfrac{1}{r^2} 
\Big) v^r 
+ \partial_r q = f^r
&\mbox{in}\ (1,\infty)\times\R \\ [10pt]
-\Big(\Delta
+ \dfrac{\gamma}{r} \partial_r
- \dfrac{1-\gamma}{r^2}
\Big) v^\theta 
= f^\theta
&\mbox{in}\ (1,\infty)\times\R \\ [10pt]
-\Delta v^z 
+ \dfrac{\gamma}{r} (\partial_z v^r - \partial_r v^z)
+ \partial_z q
=
f^z
&\mbox{in}\ (1,\infty)\times\R \\ [10pt]
\dfrac{1}{r} \partial_r (rv^r) 
+ \partial_z v^z 
= 0
&\mbox{in}\ (1,\infty)\times\R \\ [10pt]
(v^r,v^\theta,v^z)
= (0,0,0)
&\mbox{on}\ \{1\}\times\R \\ [5pt]
(v^r,v^\theta,v^z)(r,\zeta)
\to (0,0,0)
&\mbox{as}\ r\to\infty. 
\end{array}\right.
\end{equation}

For the system \eqref{eq.LP}, we have the following proposition. 
%
\begin{proposition}\label{prop.LP}
Let $\gamma>2$ and $2<\rho<3$ satisfy $\rho\le\gamma$. Let $f\in \opFX(\R,L^\infty_{\rho+1})^3$. There is a unique weak solution $v\in \opFX(\R,L^\infty_{\rho-1})^3\cap W^{1,2}_{{\rm loc}}(\overline{\Omega})^3$ of \eqref{eq.LP} satisfying 
\begin{align}\label{est.prop.LP}
\|(\partial_r,\partial_z)^j v\|_{\opFX(L^\infty_{\rho-1+j})} 
\lesssim
\|f\|_{\opFX(L^\infty_{\rho+1})}. 
\quad j=0,1. 
\end{align}
The implicit constant depends on $\gamma, \rho$.
\end{proposition}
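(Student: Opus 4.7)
My strategy is to reduce \eqref{eq.LP} to two scalar problems via the axisymmetric decomposition into a swirl component $v^\theta$ (which satisfies a decoupled equation) and a no-swirl pair $(v^r,v^z,q)$, which is recovered from its vorticity through the axisymmetric Biot-Savart law of Section~\ref{sec.ABSL}. Throughout, the analysis is performed after the Fourier transform in $z$, so all computations reduce to ODEs in $r$ of modified-Bessel type that can be treated by the method already developed in Lemmas~\ref{lem.strmfunc.L1}--\ref{lem.strmfunc.seq}.

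The swirl part first: the second equation of \eqref{eq.LP} is already decoupled and, after Fourier transform in $z$, takes the form
\begin{equation*}
-\frac{\dd^2 \hat v^\theta}{\dd r^2}
- \frac{1+\gamma}{r}\frac{\dd \hat v^\theta}{\dd r}
+ \Big(\zeta^2 + \frac{1-\gamma}{r^2}\Big)\hat v^\theta
= \hat f^\theta,
\quad \hat v^\theta(1,\zeta)=0.
\end{equation*}
The substitution $\hat v^\theta = r^{-\gamma/2}u$ reduces the homogeneous part to the standard modified Bessel equation of order $\nu_\theta = \gamma/2 - 1$, which is strictly positive thanks to the hypothesis $\gamma>2$. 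The solution thus admits a Green-function representation in terms of $r^{-\gamma/2}I_{\nu_\theta}(|\zeta|r)$ and $r^{-\gamma/2}K_{\nu_\theta}(|\zeta|r)$ strictly analogous to \eqref{rep.psi1}--\eqref{rep.psi3}, to which I would apply the same case analysis used in the proofs of Lemmas~\ref{lem.strmfunc.L1}--\ref{lem.strmfunc.seq}.

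For the no-swirl part, set $\omega = \partial_z v^r - \partial_r v^z$ and take the curl of the momentum system ($\partial_z$ of the first equation minus $\partial_r$ of the third). The pressure $q$ drops out, and using $\partial_r\Delta = \Delta\partial_r - (1/r^2)\partial_r$ one obtains the scalar vorticity equation
\begin{equation*}
-\Big(\Delta + \frac{\gamma}{r}\partial_r - \frac{1+\gamma}{r^2}\Big)\omega
= \partial_z f^r - \partial_r f^z.
\end{equation*}
After Fourier transform in $z$ and the substitution $\hat\omega = r^{-\gamma/2}u$, this becomes the modified Bessel equation of order $\nu_\omega = \gamma/2 + 1$, whose solutions and Green function are again described by $I_{\nu_\omega}$ and $K_{\nu_\omega}$. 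Once $\hat\omega$ is constructed, the axisymmetric Biot-Savart law (Corollary~\ref{cor.est.BSL}) produces $(v^r,v^z)$ through the streamfunction $\psi$; the boundary condition $v^r|_{r=1}=0$ is then automatic from $\psi|_{r=1}=0$, while the remaining condition $v^z|_{r=1}=0$ amounts to $\partial_r\psi|_{r=1}=0$, which by Proposition~\ref{prop.BSL}(\ref{item2.lem.BSL}) is equivalent to the constraints $d[h]=0$ and $d[b_m]=0$ in the notation of Lemmas~\ref{lem.strmfunc.L1}--\ref{lem.strmfunc.seq}. I would enforce this by adjusting $\hat\omega$ by a scalar multiple $c(\zeta)\,r^{-\gamma/2}K_{\nu_\omega}(|\zeta|r)$ of the decaying homogeneous mode (with the analogous modification at $\zeta=0$); the coefficient $c(\zeta)$ is then uniquely determined and depends linearly and boundedly on $\hat f$. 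Finally, the pressure $q$ is recovered from the rotation-free residual of the momentum equation via Proposition~\ref{prop.pot}.

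With all representations in place, the linear estimate \eqref{est.prop.LP} follows by aggregating the termwise pointwise bounds on products such as $K_{\nu_\omega}(|\zeta|r)\,s\,I_{\nu_\omega}(|\zeta|s)$ and $K_{\nu_\theta}(|\zeta|r)\,s\,I_{\nu_\theta}(|\zeta|s)$ against $\|\hat f(\zeta)\|_{L^\infty_{\rho+1}}$, exactly in the style of the proofs of Lemmas~\ref{lem.strmfunc.L1}--\ref{lem.strmfunc.seq}; uniqueness in $\opFX(\R,L^\infty_{\rho-1})^3$ follows because Proposition~\ref{prop.BSL}(\ref{item1.lem.BSL}) together with the explicit Fourier-side formulas forces any zero-forcing solution to vanish. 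The main obstacle, and the source of the tediousness flagged in the introduction, is the small-$|\zeta|$ regime: the integrals $\int_1^\infty K_\nu(|\zeta|s)\,s\,h(s,\zeta)\dd s$ and the boundary correction $c(\zeta)$ must be bounded uniformly as $|\zeta|\to 0$, this being the vertical-frequency counterpart of the Stokes paradox. The positivity and monotonicity of $K_\nu$ are precisely what allow one to dominate these integrals without a resolvent parameter, and the standing hypotheses $\gamma>2$, $2<\rho<3$, $\rho\le\gamma$ enter exactly here, ensuring both that $\nu_\theta>0$ and that the weighted integrals of $I_\nu$ and $K_\nu$ close in $L^\infty_{\rho-1+j}$.
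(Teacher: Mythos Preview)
Your proposal is correct and follows essentially the same route as the paper: decompose \eqref{eq.LP} into the decoupled swirl equation \eqref{eq.LP2} and the no-swirl system \eqref{eq.LP1}, solve the latter by first deriving the vorticity equation (Lemma~\ref{lem.eq.vol}), representing $\hat\omega$ via modified Bessel functions of order $\gamma/2+1$ with a boundary correction $c(\zeta)$ chosen to kill $d[\cdot]$ (Lemmas~\ref{lem.vel.L1}--\ref{lem.vel.seq}), and then applying the Biot-Savart law and Proposition~\ref{prop.pot}; uniqueness goes through Proposition~\ref{prop.BSL} as you describe (Lemma~\ref{lem.uniqueness}). Your identification of the small-$|\zeta|$ regime as the crux, handled via positivity of $K_\nu$ (Lemma~\ref{lem.est.F}), is exactly the point the paper emphasizes.
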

%
Proposition \ref{prop.LP} is a corollary of Propositions \ref{prop.LP1} and \ref{prop.LP2} in Subsections \ref{subsec.rad.vert} and \ref{subsec.swirl} below, respectively, and linearity of the system. Hence we omit the proof for simplicity.

We will study \eqref{eq.LP} by decomposing it into the system for $(v^r,v^z)$
\begin{equation}\tag{LP1}\label{eq.LP1}
\left\{
\begin{array}{ll}
-\Big(
\Delta - \dfrac{1}{r^2} 
\Big) v^r 
+ \partial_r q = f^r
&\mbox{in}\ (1,\infty)\times\R \\ [10pt]
-\Delta v^z 
+ \dfrac{\gamma}{r} (\partial_z v^r - \partial_r v^z)
+ \partial_z q
=
f^z
&\mbox{in}\ (1,\infty)\times\R \\ [10pt]
\dfrac{1}{r} \partial_r (rv^r) 
+ \partial_z v^z 
= 0
&\mbox{in}\ (1,\infty)\times\R \\ [10pt]
(v^r,v^z)
= (0,0)
&\mbox{on}\ \{1\}\times\R \\ [5pt]
(v^r,v^z)(r,\zeta)
\to (0,0)
&\mbox{as}\ r\to\infty 
\end{array}\right.
\end{equation}
and the single equation for $v^\theta$
\begin{equation}\tag{LP2}\label{eq.LP2}
\left\{
\begin{array}{ll}
-\Big(\Delta
+ \dfrac{\gamma}{r} \partial_r
- \dfrac{1-\gamma}{r^2}
\Big) v^\theta 
= f^\theta
&\mbox{in}\ (1,\infty)\times\R \\ [10pt]
v^\theta = 0 
&\mbox{on}\ \{1\}\times\R \\ [5pt]
v^\theta(r,z)\to0
&\mbox{as}\ r\to\infty. 
\end{array}\right. 
\end{equation}
%

\subsection{Estimate of radial and vertical components}
\label{subsec.rad.vert}

We consider the system \eqref{eq.LP1} for the components $(v^r,v^z)$ in this subsection.
%
\begin{proposition}\label{prop.LP1}
Let $\gamma>2$ and $2<\rho<3$ satisfy $\rho\le\gamma$. Let $f\in \opFX(\R,L^\infty_{\rho+1})^3$ have no swirl. There is a unique weak solution $v\in \opFX(\R,L^\infty_{\rho-1})^3\cap W^{1,2}_{{\rm loc}}(\overline{\Omega})^3$ of \eqref{eq.LP1} satisfying 
\begin{align}\label{est.prop.LP1}
\|(\partial_r,\partial_z)^j v\|_{\opFX(L^\infty_{\rho-1+j})} 
\lesssim
\|f\|_{\opFX(L^\infty_{\rho+1})}, 
\quad j=0,1. 
\end{align}
The implicit constant depends on $\gamma, \rho$.
\end{proposition}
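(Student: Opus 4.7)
The plan is to use the vorticity-streamfunction formulation of \eqref{eq.LP1} together with the axisymmetric Biot-Savart law developed in Section~\ref{sec.ABSL}. Since $f$ has no swirl, I seek a solution with $v^\theta=0$ and a single scalar vorticity $\omega:=(\oprot v)^\theta=\partial_z v^r-\partial_r v^z$. Eliminating the pressure by subtracting $\partial_r$ of the third equation from $\partial_z$ of the first equation in \eqref{eq.LP1}, and using the commutator identity $\partial_r\Delta=\Delta\partial_r-r^{-2}\partial_r$ for $\Delta$ as in \eqref{formula3}, one arrives at the scalar equation
\begin{align*}
-\Delta\omega+\frac{1+\gamma}{r^2}\,\omega-\frac{\gamma}{r}\,\partial_r\omega=(\oprot f)^\theta
\quad\text{in }(1,\infty)\times\R.
\end{align*}

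After Fourier transform in $z$ this becomes, for each $\zeta\in\R$, a second order ODE in $r$; the substitution $\hat\omega(r,\zeta)=r^{-\gamma/2}W(r,\zeta)$ converts it into the standard modified Bessel equation for $W$ of order $\nu:=\gamma/2+1>2$. For $\zeta\neq 0$ the decaying and growing homogeneous solutions are $r^{-\gamma/2}K_\nu(|\zeta|r)$ and $r^{-\gamma/2}I_\nu(|\zeta|r)$, and for $\zeta=0$ they are $r^{-\gamma-1}$ and $r$. I would then build a variation-of-parameters representation for $\hat\omega$ strictly analogous to \eqref{rep.psi1}--\eqref{rep.psi3} (with $K_1,I_1$ replaced by $K_\nu,I_\nu$), leaving the coefficient of the decaying homogeneous mode free. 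Setting $v:=\mathcal V[\omega]$ via Corollary~\ref{cor.est.BSL} then yields $\opdiv v=0$ and $v^r|_{r=1}=0$ automatically; the remaining boundary condition $v^z|_{r=1}=0$, equivalently $\partial_r\hat\psi(1,\zeta)=0$, is imposed by tuning the free coefficient of $r^{-\gamma/2}K_\nu(|\zeta|r)$ so that the constants $d[\,\cdot\,]$ in the streamfunction formulas of Lemmas~\ref{lem.strmfunc.L1}--\ref{lem.strmfunc.seq} vanish, per Proposition~\ref{prop.BSL}\eqref{item2.lem.BSL}.

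The estimate \eqref{est.prop.LP1} then follows by bounding $\|\hat\omega(\,\cdot\,,\zeta)\|_{L^\infty_\rho}$ uniformly in $\zeta$ on both the Lebesgue part $g\,\dd\Lambda$ and the Dirac part $\sum a_m\delta_m$ of $\hat f$, and applying Corollary~\ref{cor.est.BSL} to pass from $\omega\in\opFX(\R,L^\infty_\rho)$ to $v$ with the weighted gradient bound. The main obstacle, exactly as in the proof of Lemma~\ref{lem.strmfunc.L1}, is the low-frequency regime $|\zeta|\to 0$, where one must splice the small-argument asymptotics $K_\nu(|\zeta|r)\sim c(|\zeta|r)^{-\nu}$ with the large-argument asymptotics $K_\nu(|\zeta|r)\sim c|\zeta|^{-1/2}r^{-1/2}e^{-|\zeta|r}$ across the crossover $r\approx|\zeta|^{-1}$, and carefully bound the integrals against the power weight $s^{-\rho-1}$ in the three regions $s\le r\le|\zeta|^{-1}$, $s\le|\zeta|^{-1}\le r$, and $|\zeta|^{-1}\le s\le r$. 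The assumption $\rho\le\gamma$ is equivalent to $\nu-1\ge\rho/2$, which provides exactly the buffer needed for the Bessel kernel of order $\nu$ to absorb the weight $s^{-\rho-1}$; the positivity of $K_\nu$ with positive order, emphasized in the introduction, is what prevents cancellations in these kernel estimates. Interior $W^{1,2}_{{\rm loc}}$-regularity of $v$ then follows from standard elliptic regularity for \eqref{eq.LP1}, and uniqueness is a direct corollary of the representation: any solution of the homogeneous problem in the stated class produces an $\hat\omega$ in the kernel of the Bessel ODE that decays at infinity and has $\partial_r\hat\psi(1,\zeta)=0$, forcing $\hat\omega=0$ and hence $v=0$ by Proposition~\ref{prop.BSL}\eqref{item1.lem.BSL}.
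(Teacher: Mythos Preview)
Your plan is essentially the paper's own proof: derive the scalar vorticity equation (Lemma~\ref{lem.eq.vol}), solve the Fourier-transformed Bessel ODE of order $\nu=\gamma/2+1$ via the representation \eqref{rep.omega1}--\eqref{rep.omega4} with a free coefficient on the decaying mode, tune that coefficient so that $d[\cdot]=0$ in Lemmas~\ref{lem.strmfunc.L1}--\ref{lem.strmfunc.seq}, recover $v=\mathcal V[\omega]$ through Corollary~\ref{cor.est.BSL}, and obtain uniqueness exactly as in Lemma~\ref{lem.uniqueness} from Proposition~\ref{prop.BSL}.

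Two technical points you understate, which the paper isolates carefully. First, after tuning, the coefficient is $c[g](\zeta)=-F(\zeta)^{-1}\int_1^\infty sK_1(|\zeta|s)\hat\Phi[g](s,\zeta)\,\dd s$ with $F$ as in \eqref{def.F}; bounding $c[g]$ therefore requires a \emph{lower} bound on $F(\zeta)$ (Lemma~\ref{lem.est.F}), and it is here---not in the variation-of-parameters kernel estimates---that the positivity of $K_\nu$ is decisive, both for existence and for forcing $c(\zeta)F(\zeta)=0\Rightarrow c(\zeta)=0$ in the uniqueness argument. Second, constructing $v=\mathcal V[\omega]$ does not by itself show that $v$ solves \eqref{eq.LP1} with some pressure; the paper closes this loop via Proposition~\ref{prop.pot}, checking that $-\Delta v - V\times\oprot v - f$ is rotation-free and hence a gradient. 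A minor further point: since $(\oprot f)^\theta=\partial_z f^r-\partial_r f^z$ costs a derivative, the paper integrates by parts inside the variation-of-parameters formula (obtaining $M[g],N[g]$ in \eqref{rep3.proof.lem.vel.L1}) before estimating, which you do not mention but would need.
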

%

We prove this proposition by using the axisymmetric Biot-Savart law in Section \ref{sec.ABSL}. For this purpose, we first derive the equation for the vorticity of the vector field $v^r {\bf e}_r+v^z {\bf e}_z$.

%
\begin{lemma}\label{lem.eq.vol}
Let $\gamma\in\R$ and let $f\in C^\infty(\Omega)^3$ be axisymmetric and have no swirl. Suppose that $(v,\nabla q)$ is a smooth solution of \eqref{eq.LP}. Then the component $\omega:=(\oprot v)^\theta$ solves 
\begin{align}\label{eq.lem.eq.vol}
\begin{split}
-\Big(
\Delta + \dfrac{\gamma}{r}\partial_r - \dfrac{1+\gamma}{r^2} 
\Big) \omega 
= (\oprot f)^\theta. 
\end{split}
\end{align}
\end{lemma}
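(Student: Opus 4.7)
The strategy is to take the axisymmetric scalar curl of the momentum equations in \eqref{eq.LP} in order to eliminate the pressure and produce a scalar equation for $\omega=(\oprot v)^\theta=\partial_z v^r-\partial_r v^z$. Concretely, I would apply $\partial_z$ to the first equation of \eqref{eq.LP} and $\partial_r$ to the third, and then subtract. Since $\partial_z\partial_r q-\partial_r\partial_z q=0$, the pressure drops out, and the resulting identity reads
\begin{equation*}
-\partial_z\Big(\Delta-\frac{1}{r^2}\Big)v^r+\partial_r\Delta v^z-\partial_r\Big[\frac{\gamma}{r}\,\omega\Big]=\partial_z f^r-\partial_r f^z=(\oprot f)^\theta,
\end{equation*}
using $(\oprot f)^\theta=\partial_z f^r-\partial_r f^z$.

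Next, I would rewrite the first two terms in terms of $\omega$. Since $\Delta=\partial_r^2+\frac{1}{r}\partial_r+\partial_z^2$ commutes with $\partial_z$ but not with $\partial_r$, a short computation gives the commutator
\begin{equation*}
[\Delta,\partial_r]=\Big[\frac{1}{r}\partial_r,\partial_r\Big]=\frac{1}{r^2}\partial_r,
\end{equation*}
so that $\partial_r\Delta v^z=\Delta\partial_r v^z-\frac{1}{r^2}\partial_r v^z$. Combining this with $\partial_z\Delta v^r=\Delta\partial_z v^r$ yields
\begin{equation*}
-\partial_z\Delta v^r+\partial_r\Delta v^z=-\Delta\omega-\frac{1}{r^2}\partial_r v^z.
\end{equation*}
Together with the term $\frac{1}{r^2}\partial_z v^r$ coming from the radial vector-Laplacian correction, the two $\frac{1}{r^2}$ contributions combine to $\frac{1}{r^2}(\partial_z v^r-\partial_r v^z)=\frac{\omega}{r^2}$.

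Finally, expanding $\partial_r\bigl[\frac{\gamma}{r}\omega\bigr]=-\frac{\gamma}{r^2}\omega+\frac{\gamma}{r}\partial_r\omega$ and collecting terms gives
\begin{equation*}
-\Delta\omega+\frac{\omega}{r^2}+\frac{\gamma}{r^2}\omega-\frac{\gamma}{r}\partial_r\omega=(\oprot f)^\theta,
\end{equation*}
which is exactly \eqref{eq.lem.eq.vol}. The only non-routine point is keeping careful track of the two sources of $\frac{1}{r^2}$-terms, namely the vector-Laplacian correction in the $v^r$-equation and the commutator of $\Delta$ with $\partial_r$ in cylindrical coordinates; both must be treated consistently for the coefficient $1+\gamma$ in the zeroth-order term to come out correctly. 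Alternatively, the same identity follows in one step by applying $\oprot$ to the vector form $-\Delta v-V\times\oprot v+\nabla q=f$ (with $V$ as in \eqref{def.V}) and using formulas \eqref{formula2} and \eqref{formula5}, since $\oprot v=\omega\,{\bf e}_\theta$ in the no-swirl, axisymmetric setting; this would be my preferred streamlined presentation.
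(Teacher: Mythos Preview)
Your proposal is correct and follows essentially the same approach as the paper: the paper's proof simply applies $\partial_z$ to the $v^r$-equation and $\partial_r$ to the $v^z$-equation of \eqref{eq.LP1}, subtracts, and records the resulting identity $-(\Delta-\tfrac{1}{r^2})\omega-\gamma\,\partial_r(\tfrac{\omega}{r})=\partial_z f^r-\partial_r f^z$, from which \eqref{eq.lem.eq.vol} follows immediately. Your write-up is just a more explicit version of this, spelling out the commutator $[\Delta,\partial_r]=\tfrac{1}{r^2}\partial_r$ and the combination of the two $\tfrac{1}{r^2}$-contributions that the paper leaves implicit.
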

%
%
\begin{proof}
By the first two lines of \eqref{eq.LP1}, we see that $\omega=\partial_z v^r - \partial_r v^z$ solves
\begin{align}
\begin{split}
-\Big(\Delta - \frac{1}{r^2}\Big) \omega
- \gamma \partial_r \Big(\frac{\omega}{r}\Big) 
= \partial_z f^r - \partial_r f^z, 
\end{split}
\end{align}
which implies the assertion. 
\end{proof}
%

Let $(v^r,v^z)$ be a solution of \eqref{eq.LP1} and $v:=v^r {\bf e}_r+v^z {\bf e}_z$. We give a representation of $\omega:=(\oprot v)^\theta$. By Lemma \ref{lem.eq.vol}, the Fourier transform $\hat{\omega}$ solves the equation
\begin{align}\label{eq.ode.vol}
-\frac{\dd^2 \hat{\omega}}{\dd r^2} 
- \frac{1+\gamma}{r} \frac{\dd \hat{\omega}}{\dd r} 
+ \Big(\zeta^2 + \frac{1+\gamma}{r^2} \Big) \hat{\omega} 
= \widehat{(\oprot f)}^\theta, 
\quad r>1. 
\end{align}
The linearly independent solutions of the homogeneous equation are
$$
\left\{
\begin{array}{ll}
r^{-\frac{\gamma}{2}} K_{|\frac{\gamma}2+1|}(|\zeta|r)
\mkern9mu \text{and} \mkern9mu 
r^{-\frac{\gamma}{2}} I_{|\frac{\gamma}2+1|}(|\zeta|r)
&\mbox{if}\ \zeta\neq0 \\ [5pt]
r^{-\gamma-1}
\mkern9mu \text{and} \mkern9mu 
r 
&\mbox{if}\ \zeta=0, 
\end{array}
\right. 
$$
where $K_\nu(z), I_\nu(z)$ are the modified Bessel functions in Appendix \ref{app.lem}, with the Wronskian 
$$
\left\{
\begin{array}{ll}
r^{-\frac{\gamma}{2}-1}
&\mbox{if}\ \zeta\neq0 \\ [5pt]
(\gamma+2)r^{-\gamma-1}
&\mbox{if}\ \zeta=0. 
\end{array}
\right. 
$$
Thus, if $(\oprot f)^\theta\in \opFX(\R,L^\infty_{\rho})$ for some $\rho\ge0$ and the Fourier transform is given by 
$$
\widehat{(\oprot f)}^\theta
= \tilde{h}(r,\zeta) \dd \Lambda 
+ \sum_{m\in\Z} \tilde{b}_m(r) \delta_m, 
$$
then the solution of \eqref{eq.ode.vol} belonging to $\bigcup_{\rho\ge0} \opX(\R,L^\infty_{\rho})$ has the form 
\begin{align}\label{rep.omega1}
\begin{split}
\hat{\omega}(r)
&=
c\lfloor \sigma_4(r) + \Phi(r), \\
\hat{\Phi}(r)
&:=
\int_1^r 
\widehat{(\oprot f)}^\theta\lfloor \sigma_5(s,r) \dd s 
+ \int_r^\infty 
\widehat{(\oprot f)}^\theta\lfloor \sigma_6(s,r) \dd s.
\end{split}
\end{align}
Here $c\lfloor \sigma_4(r)$ and $\widehat{(\oprot f)}^\theta\lfloor \sigma_i$ denote the Borel measures
\begin{align}\label{rep.omega2}
\begin{split}
c\lfloor \sigma_4(r,\mathcal{O})
&=
\int_{\mathcal{O}}
\sigma_4(r,\zeta) c(\zeta)
\dd \Lambda 
+ \sum_{m\in\Z\cap\mathcal{O}}
\sigma_4(r,m) c_m \delta_m, 
\quad \mathcal{O}\in \mathcal{B}(\R), 
\end{split}
\end{align}
for some function $c(\zeta)$ and series $\{c_m\}_{m\in\Z}$, and
\begin{align}\label{rep.omega3}
\begin{split}
\widehat{(\oprot f)}^\theta\lfloor \sigma_i(r,s,\mathcal{O})
&=
\int_{\mathcal{O}}
\sigma_i(r,s,\zeta) \widetilde{h}(s,\zeta)
\dd \Lambda \\
&\quad
+ \sum_{m\in\Z\cap\mathcal{O}}
\sigma_i(r,s,m) \widetilde{b}_m(s) \delta_m, 
\quad \mathcal{O}\in \mathcal{B}(\R), 
\end{split}
\end{align}
respectively. Moreover, $\sigma_4, \sigma_5, \sigma_6$ are the functions 
\begin{align}\label{rep.omega4}
\begin{split}
\sigma_4(r,\zeta) 
&= 
\left\{
\begin{array}{ll}
r^{-\frac{\gamma}{2}} K_{|\frac{\gamma}2+1|}(|\zeta|r) 
&\mbox{if}\ \zeta\neq0\\ [5pt]
r^{-\gamma-1} 
&\mbox{if}\ \zeta=0\\ 
\end{array}
\right. \\
\sigma_5(r,s,\zeta) 
&= 
\left\{
\begin{array}{ll}
r^{-\frac{\gamma}{2}}
K_{|\frac{\gamma}2+1|}(|\zeta|r)
s^{\frac{\gamma}{2}+1}
I_{|\frac{\gamma}2+1|}(|\zeta|s)
&\mbox{if}\ \zeta\neq0\\ [5pt]
\dfrac{1}{\gamma+2}
r^{-\gamma-1}
s^{\gamma+2}
&\mbox{if}\ \zeta=0\\ [5pt]
\end{array}
\right. \\
\sigma_6(r,s,\zeta) 
&= 
\left\{
\begin{array}{ll}
r^{-\frac{\gamma}{2}}
I_{|\frac{\gamma}2+1|}(|\zeta|r)
s^{\frac{\gamma}{2}+1}
K_{|\frac{\gamma}2+1|}(|\zeta|s) 
&\mbox{if}\ \zeta\neq0\\ [5pt]
\dfrac{1}{\gamma+2}
r
&\mbox{if}\ \zeta=0. \\ [5pt]
\end{array}
\right. 
\end{split}
\end{align}
A representation of $\omega$ is obtained by the inverse Fourier transform of $\hat{\omega}$ in \eqref{rep.omega1}--\eqref{rep.omega4}.

The proof of Proposition \ref{prop.LP1} uses Lemma \ref{lem.vel.L1}--\ref{lem.uniqueness} below.

%
\begin{lemma}\label{lem.vel.L1}
Let $\gamma>2$ and $2<\rho<3$ satisfy $\rho\le\gamma$. Let $f\in \opFX(\R,L^\infty_{\rho+1})^3$ have no swirl and be given by, for some $g\in L^1(\R,L^\infty_{\rho+1})^3$, 
$$
\hat{f} = g(r,\theta,\zeta) \dd \Lambda. 
$$
There is a weak solution $v\in \opFX(\R,L^\infty_{\rho-1})^3\cap W^{1,2}_{{\rm loc}}(\overline{\Omega})^3$ of \eqref{eq.LP1} having the form  
\begin{align}\label{eq.lem.vel.L1}
\begin{split}
\hat{v} = \hat{v}[g](r,\theta,\zeta) \dd \Lambda
\end{split}
\end{align}
and satisfying 
\begin{align}\label{est.lem.vel.L1}
\begin{split}
\|
(\partial_r,\partial_z)^{j} v
\|_{\opFX(L^\infty_{\rho-1+j})}
\lesssim
\|g\|_{L^1(L^\infty_{\rho+1})}, 
\quad j=0,1. 
\end{split}
\end{align}
The implicit constant depends on $\gamma,\rho$.
\end{lemma}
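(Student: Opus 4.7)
The plan is to reconstruct $v$ from its vorticity $\omega := (\oprot v)^\theta$ via the axisymmetric Biot-Savart law of Section \ref{sec.ABSL}. By Lemma \ref{lem.eq.vol}, $\omega$ must solve the scalar equation \eqref{eq.lem.eq.vol} with source $(\oprot f)^\theta = \partial_z f^r - \partial_r f^z$. After Fourier transforming in $z$, the general solution of the ODE \eqref{eq.ode.vol} that is admissible at $r=\infty$ has the form $\hat\omega(r,\zeta) = c(\zeta)\sigma_4(r,\zeta) + \hat\Phi(r,\zeta)$ from \eqref{rep.omega1}--\eqref{rep.omega4}, with $\hat\Phi$ produced by $\widehat{(\oprot f)}^\theta = \iu\zeta \hat f^r - \partial_r \hat f^z$. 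Since only $\hat f^z$, not its radial derivative, belongs to our class, I would first integrate the $f^z$-terms by parts in $s$ inside the integrals defining $\hat\Phi$. The two boundary contributions at $s=r$ cancel because $\sigma_5(r,r,\zeta) = \sigma_6(r,r,\zeta) = r\, I_{|\frac{\gamma}{2}+1|}(|\zeta|r) K_{|\frac{\gamma}{2}+1|}(|\zeta|r)$; the contribution at $s=\infty$ vanishes by decay; and the boundary term at $s=1$ equals $I_{|\frac{\gamma}{2}+1|}(|\zeta|)\,\hat f^z(1,\zeta)\,\sigma_4(r,\zeta)$, which has the form of the homogeneous correction and can be absorbed into $c(\zeta)$. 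After this manipulation, $\hat\Phi$ depends on $\hat f$ only through $\hat f$ itself, not its derivatives.

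The coefficient $c(\zeta)$ is then fixed by requiring that $v := \mathcal V[\omega]$ satisfy the full no-slip condition on $\{1\}\times\R$. The formula \eqref{def.ABSL} gives $\mathcal V^r|_{r=1}=0$ automatically via \eqref{prpty.BSL}, so the only additional condition is $\mathcal V^z|_{r=1}=0$, which by the identity \eqref{eq3.lem.strmfunc.L1} (equivalently, by Proposition \ref{prop.BSL}(\ref{item2.lem.BSL})) is the single scalar equation
\[
c(\zeta)\, d[\sigma_4(\cdot,\zeta)] + d[\hat\Phi(\cdot,\zeta)] = 0.
\]
Since $I_\nu,K_\nu>0$ on $(0,\infty)$, the quantity $d[\sigma_4(\cdot,\zeta)]$ is strictly positive for every $\zeta$, so this determines $c(\zeta)$ uniquely. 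With $\hat\omega$ constructed in this way, the velocity is defined by $\hat v = \hat{\mathcal V}[\hat\omega]$ through \eqref{def'.BSL}; the absolutely continuous structure $\hat v = \hat v[g]\,\dd\Lambda$ is preserved by the formulas, which verifies \eqref{eq.lem.vel.L1}. Moreover Corollary \ref{cor.est.BSL} reduces the target bound \eqref{est.lem.vel.L1} to proving
\[
\|\hat\omega\|_{L^1(L^\infty_\rho)} \lesssim \|g\|_{L^1(L^\infty_{\rho+1})}.
\]

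Bounding $\hat\Phi$ itself proceeds in the same spirit as Lemma \ref{lem.strmfunc.L1}: split the $s$-integration and the evaluation point $r$ according to the thresholds $|\zeta|^{-1}$, insert the asymptotics of $I_\nu,K_\nu$ from Lemma \ref{lem.MBF}, and verify that the resulting pointwise bounds in $r$ carry the weight $r^{-\rho}$ and remain integrable in $\zeta$. The main obstacle will be controlling the homogeneous correction $c(\zeta)\,\sigma_4(r,\zeta)$ as $\zeta\to 0$: both $d[\sigma_4](\zeta)$ and $d[\hat\Phi](\zeta)$ degenerate at the origin, so sharp two-sided estimates are required to keep $c(\zeta)$ compatible with an $L^1_\zeta$-bound while preserving the decay $r^{-\rho}$ uniformly in $\zeta$. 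This is precisely where the positivity and monotonicity of $K_{|\frac{\gamma}{2}+1|}$ highlighted in the introduction, together with the hypotheses $\gamma>2$ and $\rho\le\gamma$ (which ensure convergence of the relevant weighted integrals of $r^{-\gamma/2-1}$ against $r^{-\rho}$ on $(1,\infty)$), enter decisively. Once \eqref{est.lem.vel.L1} is established, the $W^{1,2}_{\mathrm{loc}}(\overline{\Omega})$-regularity and the weak-solution property of the constructed $v$ follow from standard interior elliptic regularity for \eqref{eq.LP1}.
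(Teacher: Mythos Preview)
Your outline matches the paper's strategy closely: solve the vorticity ODE \eqref{eq.ode.vol} with the free constant $c(\zeta)$ fixed by the no-slip condition $d[\hat\omega]=0$, estimate $\hat\omega$ in $L^1(L^\infty_\rho)$, and invoke Corollary \ref{cor.est.BSL}. The integration by parts in $\hat\Phi$, the identification of $d[\sigma_4(\cdot,\zeta)]$ with the positive function $F(\zeta)$ of \eqref{def.F}, and the use of the lower bound from Lemma \ref{lem.est.F} to control $c(\zeta)$ near $\zeta=0$ are all as in the paper.

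There is one genuine gap. Your final sentence asserts that the weak-solution property ``follows from standard interior elliptic regularity for \eqref{eq.LP1}''. Elliptic regularity upgrades smoothness of an already-known solution; it does not show that the field $v=\mathcal V[\omega]$ you built actually solves \eqref{eq.LP1} for some pressure $q$. The paper handles this in two stages. First it takes $g\in C^\infty_0(\Omega)^3$, so that $g^z(1,\zeta)=0$ and the $s=1$ boundary term in your integration by parts simply vanishes; for such $g$ one checks classically that $\oprot(-\Delta v - V\times\oprot v - f)=0$ and then invokes Proposition \ref{prop.pot} to produce $q$. Second, for general $g\in L^1(\R,L^\infty_{\rho+1})^3$ it approximates by $g_n\in C^\infty_0(\Omega)^3$ and passes to the limit in the weak formulation, using an auxiliary $L^1(L^2)$ bound on $\widehat{\nabla v}$ (see \eqref{est11.proof.lem.vel.L1}). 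Your shortcut of absorbing the $s=1$ boundary contribution into $c(\zeta)$ is only formal for non-smooth $g$, since $\hat f^z(1,\zeta)$ has no meaning for $g^z(\cdot,\zeta)\in L^\infty_{\rho+1}$; and even after absorption you still owe the existence of the pressure, which is the content of Proposition \ref{prop.pot} and the approximation argument, not of elliptic regularity.
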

%
%
\begin{proof}
As in the proof of Lemma \ref{lem.strmfunc.L1}, we may identify $g\dd \Lambda$ with $g$. We will find a solution $v$ by applying the axisymmetric Biot-Savart law \eqref{def.ABSL}. Set $\omega:=(\oprot v)^\theta$.

First we assume that $g\in C^{\infty}_0(\Omega)^3$ is axisymmetric and has no swirl. Since 
$$
\widehat{(\oprot f)}^\theta(s,\zeta) 
=\widehat{(\oprot g)}^\theta(s,\zeta) 
=\iu \zeta g^r(s,\zeta) - \frac{\dd g^z}{\dd s}(s,\zeta) 
$$
and $\hat{\omega}$ solves \eqref{eq.ode.vol}, the formula \eqref{rep.omega1}--\eqref{rep.omega4} shows that 
\begin{align}\label{rep1.proof.lem.vel.L1}
\hat{\omega}(r,\zeta)
&=
c[g](\zeta) r^{-\frac{\gamma}{2}} K_{\frac{\gamma}2+1}(|\zeta|r) 
+ \hat{\Phi}[g](r,\zeta). 
\end{align}
Here $c[g](\zeta)$ is given in \eqref{rep4.proof.lem.vel.L1} below and 
\begin{align*}
\begin{split}
&\hat{\Phi}[g](r,\zeta) \\
&=
\int_1^r 
r^{-\frac{\gamma}{2}}
K_{\frac{\gamma}2+1}(|\zeta|r)
s^{\frac{\gamma}2+1}
I_{\frac{\gamma}2+1}(|\zeta|s)
\Big(
\iu \zeta g^r(s,\zeta) - \frac{\dd g^z}{\dd s}(s,\zeta) 
\Big) \dd s \\
&\quad
+ \int_r^\infty 
r^{-\frac{\gamma}{2}}
I_{\frac{\gamma}2+1}(|\zeta|r)
s^{\frac{\gamma}2+1}
K_{\frac{\gamma}2+1}(|\zeta|s)
\Big(
\iu \zeta g^r(s,\zeta) - \frac{\dd g^z}{\dd s}(s,\zeta)
\Big) \dd s. 
\end{split}
\end{align*}
We rewrite $\hat{\Phi}[g]$ in \eqref{rep1.proof.lem.vel.L1}. By integration by parts, one has 
\begin{align}\label{rep2.proof.lem.vel.L1}
\begin{split}
\hat{\Phi}[g](r,\zeta) 
&=
\int_1^r 
r^{-\frac{\gamma}{2}} 
K_{\frac{\gamma}2+1}(|\zeta|r) 
M[g](s,\zeta)
\dd s \\
&\quad
+ \int_r^\infty 
r^{-\frac{\gamma}{2}}
I_{\frac{\gamma}2+1}(|\zeta|r) 
N[g](s,\zeta)
\dd s 
\end{split}
\end{align}
with
\begin{align*}
\begin{split}
M[g](s,\zeta)
&=
\iu\zeta
s^{\frac{\gamma}2+1}
I_{\frac{\gamma}2+1}(|\zeta|s)
g^r(s,\zeta)
+ \frac{\dd}{\dd s} 
(
s^{\frac{\gamma}2+1}
I_{\frac{\gamma}2+1}(|\zeta|s)
)
g^z(s,\zeta), \\
N[g](s,\zeta)
&= 
\iu\zeta
s^{\frac{\gamma}2+1}
K_{\frac{\gamma}2+1}(|\zeta|s)
g^r(s,\zeta)
+ \frac{\dd}{\dd s} 
(
s^{\frac{\gamma}2+1}
K_{\frac{\gamma}2+1}(|\zeta|s)
)
g^z(s,\zeta). 
\end{split}
\end{align*}
By the relation \eqref{eq.rel} in Appendix \ref{app.lem}, we find 
\begin{align*}
\begin{split}
\frac{\dd}{\dd s} 
(
s^{\frac{\gamma}2+1}
I_{\frac{\gamma}2+1}(|\zeta|s)
)
&=
|\zeta| s^{\frac{\gamma}2+1}
I_{\frac{\gamma}2}(|\zeta|s), \\
\frac{\dd}{\dd s} 
(
s^{\frac{\gamma}2+1}
K_{\frac{\gamma}2+1}(|\zeta|s)
)
&= 
-|\zeta| s^{\frac{\gamma}2+1} 
K_{\frac{\gamma}2}(|\zeta|s). 
\end{split}
\end{align*}
Thus $M[g]$ and $N[g]$ can be rewritten as
\begin{align}\label{rep3.proof.lem.vel.L1}
\begin{split}
M[g](s,\zeta)
&=
\iu\zeta
s^{\frac{\gamma}2+1}
I_{\frac{\gamma}2+1}(|\zeta|s)
g^r(s,\zeta)
+ 
|\zeta| s^{\frac{\gamma}2+1}
I_{\frac{\gamma}2}(|\zeta|s)
g^z(s,\zeta), \\
N[g](s,\zeta)
&= 
\iu\zeta
s^{\frac{\gamma}2+1}
K_{\frac{\gamma}2+1}(|\zeta|s)
g^r(s,\zeta)
-|\zeta| s^{\frac{\gamma}2+1} 
K_{\frac{\gamma}2}(|\zeta|s)
g^z(s,\zeta). 
\end{split}
\end{align}
The function $c[g](\zeta)$ is chosen so that $d[h](\zeta)$ in Lemma \ref{lem.strmfunc.L1} with $h=\hat{\omega}$ is zero, namely, 
\begin{align*}
\begin{split}
\int_1^\infty 
s K_1(|\zeta|s) 
\Big(
c[g](\zeta) s^{-\frac{\gamma}{2}}
K_{\frac{\gamma}2+1}(|\zeta|s)
+ \hat{\Phi}[g](s,\zeta)
\Big) \dd s
=0, 
\quad 
\zeta\neq0, 
\end{split}
\end{align*}
which leads to 
\begin{align}\label{rep4.proof.lem.vel.L1}
\begin{split}
c[g](\zeta)
= -\frac{1}{F(\zeta)}
\int_1^\infty
s K_1(|\zeta|s) \hat{\Phi}[g](s,\zeta)
\dd s, 
\quad 
\zeta\neq0. 
\end{split}
\end{align}
Here $F(\zeta)$ is the nonnegative function
\begin{align}\label{def.F}
\begin{split}
F(\zeta)
:=
\int_{1}^{\infty}
s^{-\frac{\gamma}{2}+1} K_1(|\zeta|s) 
K_{\frac{\gamma}2+1}(|\zeta|s)
\dd s, 
\quad 
\zeta\neq0, 
\end{split}
\end{align}
of which quantitative estimates are given in Lemma \ref{lem.est.F}.

In the following proof, Lemma \ref{lem.MBF} in Appendix \ref{app.lem} will be used without any reference. Let us estimate the right-hand side of \eqref{rep1.proof.lem.vel.L1}. First we consider the second term $\hat{\Phi}[g]$ since the first term depends on it. Using \eqref{rep3.proof.lem.vel.L1}, we start from
\begin{align*}
\begin{split}
&|r^{-\frac{\gamma}{2}} 
K_{\frac{\gamma}2+1}(|\zeta|r) 
M[g](s,\zeta)| \\
&\lesssim
|\zeta|
r^{-\frac{\gamma}{2}}
K_{\frac{\gamma}2+1}(|\zeta|r)
s^{\frac{\gamma}2+1}
\Big(
I_{\frac{\gamma}2+1}(|\zeta|s)
+ I_{\frac{\gamma}2}(|\zeta|s)
\Big)
|g(s,\zeta)|, \\
&|r^{-\frac{\gamma}{2}}
I_{\frac{\gamma}2+1}(|\zeta|r) 
N[g](s,\zeta)| \\
&\lesssim
|\zeta|
r^{-\frac{\gamma}{2}}
I_{\frac{\gamma}2+1}(|\zeta|r)
s^{\frac{\gamma}2+1}
\Big(
K_{\frac{\gamma}2+1}(|\zeta|s)
+ K_{\frac{\gamma}2}(|\zeta|s)
\Big)
|g(s,\zeta)|. 
\end{split}
\end{align*}
When $0<|\zeta|\le1$, we have 
\begin{align}\label{est1.proof.lem.vel.L1}
\begin{split}
&|r^{-\frac{\gamma}{2}} 
K_{\frac{\gamma}2+1}(|\zeta|r) 
M[g](s,\zeta)| \\
&\lesssim
\left\{
\begin{array}{ll}
r^{-\gamma-1} 
s^{\gamma+1}
(|\zeta|s+1)
|g(s,\zeta)|
&\mbox{if}\ 
s \le r \le |\zeta|^{-1} \\ [5pt]
|\zeta|^{\frac{\gamma}{2}+\frac12}
r^{-\frac{\gamma}{2}-\frac12}
e^{-|\zeta|r} 
s^{\gamma+1}
(|\zeta|s+1)
|g(s,\zeta)|
&\mbox{if}\ 
s \le |\zeta|^{-1} \le r \\ [5pt]
r^{-\frac{\gamma}{2}-\frac12}
s^{\frac{\gamma}{2}+\frac12} e^{-|\zeta|(r-s)}
|g(s,\zeta)|
&\mbox{if}\ 
|\zeta|^{-1} \le s \le r 
\end{array}\right. \\
&\lesssim
\left\{
\begin{array}{ll}
r^{-\gamma-1} 
s^{\gamma-\rho}
\|g(\zeta)\|_{L^\infty_{\rho+1}}
&\mbox{if}\ 
s \le r \le |\zeta|^{-1} \\ [5pt]
|\zeta|^{\gamma+1}
e^{-|\zeta|r} 
s^{\gamma-\rho}
\|g(\zeta)\|_{L^\infty_{\rho+1}}
&\mbox{if}\ 
s \le |\zeta|^{-1} \le r \\ [5pt]
s^{-\rho-1} e^{-|\zeta|(r-s)}
\|g(\zeta)\|_{L^\infty_{\rho+1}}
&\mbox{if}\ 
|\zeta|^{-1} \le s \le r 
\end{array}\right. 
\end{split}
\end{align}
and 
\begin{align}\label{est2.proof.lem.vel.L1}
\begin{split}
&
|r^{-\frac{\gamma}{2}}
I_{\frac{\gamma}2+1}(|\zeta|r) 
N[g](s,\zeta)| \\
&\lesssim
\left\{
\begin{array}{ll}
|\zeta| r
(|\zeta|s+1)
|g(s,\zeta)|
&\mbox{if}\ 
r \le s \le |\zeta|^{-1} \\ [5pt]
|\zeta|^{\frac{\gamma}2+\frac32} r
s^{\frac{\gamma}2+\frac12}
e^{-|\zeta|s} 
|g(s,\zeta)|
&\mbox{if}\ 
r \le |\zeta|^{-1} \le s \\ [5pt]
r^{-\frac{\gamma}2-\frac12} 
s^{\frac{\gamma}2+\frac12} e^{|\zeta|(r-s)}
|g(s,\zeta)|
&\mbox{if}\ 
|\zeta|^{-1} \le r \le s
\end{array}\right. \\
&\lesssim
\left\{
\begin{array}{ll}
s^{-\rho-1}
\|g(\zeta)\|_{L^\infty_{\rho+1}}
&\mbox{if}\ 
r \le s \le |\zeta|^{-1} \\ [5pt]
|\zeta|^{\frac{\gamma}2+\frac32}
s^{\frac{\gamma}2-\rho+\frac12}
e^{-|\zeta|s} 
\|g(\zeta)\|_{L^\infty_{\rho+1}}
&\mbox{if}\ 
r \le |\zeta|^{-1} \le s \\ [5pt]
r^{-\frac{\gamma}2-\frac12} 
s^{\frac{\gamma}2-\rho-\frac12} e^{|\zeta|(r-s)}
\|g(\zeta)\|_{L^\infty_{\rho+1}} 
&\mbox{if}\ 
|\zeta|^{-1} \le r \le s. 
\end{array}\right.
\end{split}
\end{align}
When $|\zeta|\ge 1$, we have
\begin{align}\label{est3.proof.lem.vel.L1}
\begin{split}
|r^{-\frac{\gamma}{2}} 
K_{\frac{\gamma}2+1}(|\zeta|r) 
M[g](s,\zeta)| 
&\lesssim
r^{-\frac{\gamma}{2}-\frac12}
s^{\frac{\gamma}{2}+\frac12} e^{-|\zeta|(r-s)}
|g(s,\zeta)| \\
&\lesssim
s^{-\rho-1} e^{-|\zeta|(r-s)}
\|g(\zeta)\|_{L^\infty_{\rho+1}},
\quad
s \le r 
\end{split}
\end{align}
and
\begin{align}\label{est4.proof.lem.vel.L1}
\begin{split}
|r^{-\frac{\gamma}{2}}
I_{\frac{\gamma}2+1}(|\zeta|r) 
N[g](s,\zeta)| 
&\lesssim 
r^{-\frac{\gamma}2-\frac12} 
s^{\frac{\gamma}2+\frac12} e^{|\zeta|(r-s)}
|g(s,\zeta)| \\
&\lesssim
r^{-\frac{\gamma}2-\frac12} 
s^{\frac{\gamma}2-\rho-\frac12} e^{|\zeta|(r-s)}
\|g(\zeta)\|_{L^\infty_{\rho+1}}, 
\quad 
r \le s. 
\end{split}
\end{align}
Then we see from \eqref{est1.proof.lem.vel.L1}--\eqref{est2.proof.lem.vel.L1} that, when $0<|\zeta|\le 1$, 
\begin{align}\label{est5.proof.lem.vel.L1}
\begin{split}
&\int_1^r
|r^{-\frac{\gamma}{2}} 
K_{\frac{\gamma}2+1}(|\zeta|r) 
M[g](s,\zeta)|
\dd s 
+ \int_r^\infty
|r^{-\frac{\gamma}{2}}
I_{\frac{\gamma}2+1}(|\zeta|r) 
N[g](s,\zeta)|
\dd s \\
&\lesssim
\left\{
\begin{array}{ll}
(r^{-\rho} + |\zeta|^{\rho})
\|g(\zeta)\|_{L^\infty_{\rho+1}}
&\mbox{if}\ r \le |\zeta|^{-1} \\ [5pt]
(
|\zeta|^{\rho} e^{-|\zeta|r} 
+ |\zeta|^{-1} r^{-\rho-1}
)
\|g(\zeta)\|_{L^\infty_{\rho+1}}
&\mbox{if}\ r\ge |\zeta|^{-1} 
\end{array}\right.
\end{split}
\end{align}
and from \eqref{est3.proof.lem.vel.L1}--\eqref{est4.proof.lem.vel.L1} that, when $|\zeta|\ge 1$, 
\begin{align}\label{est6.proof.lem.vel.L1}
\begin{split}
&\int_1^r
|r^{-\frac{\gamma}{2}} 
K_{\frac{\gamma}2+1}(|\zeta|r) 
M[g](s,\zeta)|
\dd s
+ \int_r^\infty
|r^{-\frac{\gamma}{2}}
I_{\frac{\gamma}2+1}(|\zeta|r) 
N[g](s,\zeta)|
\dd s \\
&\lesssim
|\zeta|^{-1} r^{-\rho-1} \|g(\zeta)\|_{L^\infty_{\rho+1}}.
\end{split}
\end{align}
Now the estimates \eqref{est5.proof.lem.vel.L1}--\eqref{est6.proof.lem.vel.L1} lead to
\begin{align}\label{est.Phi.proof.lem.vel.L1}
\begin{split}
|\hat{\Phi}(r,\zeta)|
\lesssim
\left\{
\begin{array}{ll}
r^{-\rho} \|g(\zeta)\|_{L^\infty_{\rho+1}}
&\mbox{if}\ 0 < |\zeta| \le1 \\ [5pt]
|\zeta|^{-1} r^{-\rho-1}
\|g(\zeta)\|_{L^\infty_{\rho+1}}
&\mbox{if}\ |\zeta| \ge 1.  
\end{array}\right.
\end{split}
\end{align}
Next we estimate the first term in the right-hand side of \eqref{rep1.proof.lem.vel.L1}. When $0<|\zeta|\le1$, we have
\begin{align}\label{est7.proof.lem.vel.L1}
\begin{split}
&|c[g](\zeta)| \\
&\lesssim
|F(\zeta)|^{-1}
\bigg(
\int_{1}^{|\zeta|^{-1}}
+ \int_{|\zeta|^{-1}}^{\infty}
\bigg)
s K_1(|\zeta|s) |\hat{\Phi}[g](s,\zeta)|
\dd s \\
&\lesssim
|\zeta|^{\frac{\gamma}{2}+2}
\|\hat{\Phi}(\zeta)\|_{L^\infty_{\rho}}
\bigg(
\int_{1}^{|\zeta|^{-1}}
|\zeta|^{-1} s^{-\rho}
\dd s
+ \int_{|\zeta|^{-1}}^{\infty}
|\zeta|^{-\frac12} s^{-\rho+\frac12} e^{-|\zeta|s}
\dd s
\bigg) \\
&\lesssim
|\zeta|^{\frac{\gamma}{2}+1}
\|g(\zeta)\|_{L^\infty_{\rho+1}}. 
\end{split}
\end{align}
When $|\zeta|\ge1$, we have 
\begin{align}\label{est8.proof.lem.vel.L1}
\begin{split}
|c[g](\zeta)|
&\lesssim
|F(\zeta)|^{-1}
\int_{1}^{\infty}
s K_1(|\zeta|s) |\hat{\Phi}[g](s,\zeta)|
\dd s \\
&\lesssim
|\zeta|^{\frac12} e^{2|\zeta|}
\|\hat{\Phi}(\zeta)\|_{L^\infty_{\rho}}
\int_{1}^{\infty}
s^{-2\rho+\frac32} e^{-|\zeta|s}
\dd s \\
&\lesssim
|\zeta|^{-\frac12} e^{|\zeta|}
\|g(\zeta)\|_{L^\infty_{\rho+1}}. 
\end{split}
\end{align}
Then we see from \eqref{est7.proof.lem.vel.L1} that, when $0<|\zeta|\le 1$, 
\begin{align}\label{est9.proof.lem.vel.L1}
\begin{split}
|c[g](\zeta)
r^{-\frac{\gamma}{2}}
K_{\frac{\gamma}2+1}(|\zeta|r)| 
\lesssim
\left\{
\begin{array}{ll}
r^{-\gamma-1}
\|g(\zeta)\|_{L^\infty_{\rho+1}}
&\mbox{if}\ 
r \le |\zeta|^{-1} \\ [5pt]
|\zeta|^{\frac{\gamma}{2}+\frac12} 
r^{-\frac{\gamma}{2}-\frac12}
e^{-|\zeta|r}
\|g(\zeta)\|_{L^\infty_{\rho+1}} 
&\mbox{if}\ 
r \ge |\zeta|^{-1} 
\end{array}\right.
\end{split}
\end{align}
and from \eqref{est8.proof.lem.vel.L1} that, when $|\zeta|\ge 1$, 
\begin{align}\label{est10.proof.lem.vel.L1}
\begin{split}
|c[g](\zeta)
r^{-\frac{\gamma}{2}}
K_{\frac{\gamma}2+1}(|\zeta|r)|
\lesssim
|\zeta|^{-1} 
r^{-\frac{\gamma}2-\frac12}
e^{-|\zeta|(r-1)}
\|g(\zeta)\|_{L^\infty_{\rho+1}}. 
\end{split}
\end{align}
Now the estimates \eqref{est9.proof.lem.vel.L1}--\eqref{est10.proof.lem.vel.L1} lead to
\begin{align}\label{est.crK.proof.lem.vel.L1}
\begin{split}
&|c[g](\zeta)
r^{-\frac{\gamma}{2}}
K_{\frac{\gamma}2+1}(|\zeta|r)| \\
&\lesssim
\left\{
\begin{array}{ll}
r^{-\gamma-1} \|g(\zeta)\|_{L^\infty_{\rho+1}}
&\mbox{if}\ 0 < |\zeta| \le1 \\ [5pt]
|\zeta|^{-1} 
r^{-\frac{\gamma}2-\frac12}
e^{-|\zeta|(r-1)}
\|g(\zeta)\|_{L^\infty_{\rho+1}}
&\mbox{if}\ |\zeta| \ge 1 
\end{array}\right. \\
&\lesssim
\left\{
\begin{array}{ll}
r^{-\rho} \|g(\zeta)\|_{L^\infty_{\rho+1}}
&\mbox{if}\ 0 < |\zeta| \le1 \\ [5pt]
|\zeta|^{-1} r^{-\rho-1}
\|g(\zeta)\|_{L^\infty_{\rho+1}}
&\mbox{if}\ |\zeta| \ge 1. 
\end{array}\right.
\end{split}
\end{align}

Thanks to \eqref{est.Phi.proof.lem.vel.L1} and \eqref{est.crK.proof.lem.vel.L1}, the solution $\hat{\omega}$ in \eqref{rep1.proof.lem.vel.L1} is estimated as 
\begin{align}\label{est.omega.proof.lem.vel.L1}
\begin{split}
\|\hat{\omega}\|_{L^1(L^\infty_{\rho})} 
\lesssim
\|g\|_{L^1(L^\infty_{\rho+1})}. 
\end{split}
\end{align}
Let $\mathcal{V} = \mathcal{V}[\omega]$ denote the Biot-Savart law \eqref{def.ABSL}. Corollary \ref{cor.est.BSL} shows that 
\begin{align}\label{est.V.proof.lem.vel.L1}
\begin{split}
\|(\partial_r,\partial_z)^{j} \mathcal{V}\|_{\opFX(L^\infty_{\rho-1+j})}
\lesssim 
\|\omega\|_{\opFX(L^\infty_{\rho})} 
=\|\hat{\omega}\|_{L^1(L^\infty_{\rho})}
\lesssim
\|g\|_{L^1(L^\infty_{\rho+1})}, 
\quad j=0,1. 
\end{split}
\end{align}

Set $v=\mathcal{V}\in \opFX(\R,L^\infty_{\rho-1})^3\cap W^{1,2}_{{\rm loc}}(\overline{\Omega})^3$. We claim that $v$ is a weak solution of \eqref{eq.LP1} for some associated pressure $q\in W^{1,2}_{{\rm loc}}(\overline{\Omega})$. Indeed, the assumption $g\in C^{\infty}_0(\Omega)^3$ implies that $v$ is smooth on $\Omega$ by elliptic regularity for \eqref{eq.LP1}. Moreover, we see that 
\begin{align*}
\begin{split}
-\Delta v - V\times\oprot v - f 
\in L^2_{{\rm loc}}(\overline{\Omega})^3 
\end{split}
\end{align*}
and, combined with the definition of $\mathcal{V}$, that 
\begin{align*}
\begin{split}
\oprot(
-\Delta v - V\times\oprot v - f
)
=0. 
\end{split}
\end{align*}
Then Proposition \ref{prop.pot} implies that there is a pressure $q\in W^{1,2}_{{\rm loc}}(\overline{\Omega})$ such that
\begin{align}\label{eq.proof.lem.vel.L1}
\begin{split}
\langle -\Delta v - V\times\oprot v + \nabla q, \varphi\rangle
=
\langle f, \varphi\rangle, 
\quad 
\varphi\in C^\infty_{0,\sigma}(\Omega). 
\end{split}
\end{align}
Thus the claim follows. Now Lemma \ref{lem.vel.L1} is proved if $g$ is assumed to belong to $C^{\infty}_0(\Omega)^3$.

Next we consider the general case: let $g\in L^1(\R,L^\infty_{\rho+1})^3$ be axisymmetric and have no swirl. Put $g$ in the formula \eqref{rep1.proof.lem.vel.L1}--\eqref{rep4.proof.lem.vel.L1}. Then the Biot-Savart law $v:=\mathcal{V}$ satisfies \eqref{est.lem.vel.L1}. Examining the estimates above and the ones in the proof of Lemma \ref{lem.strmfunc.L1}, one has 
\begin{align}\label{est11.proof.lem.vel.L1}
\begin{split}
\|\Phi\|_{L^1(L^\infty_{2})}
+ \|\hat{\omega}\|_{L^1(L^\infty_{2})}
+ \|\widehat{\nabla v}\|_{L^1(L^{2})}
\lesssim
\int_{\R} 
\bigg(
\int_{1}^{\infty} s^4 |g(s,\zeta)|^2 \dd s
\bigg)^{\frac12}
\dd \zeta. 
\end{split}
\end{align}
To show that $v$ is a weak solution of \eqref{eq.LP1}, we take $\{g_n\}_{n=1}^{\infty}\subset C^\infty_0(\Omega)^3$ such that 
$$
\lim_{n\to\infty}
\int_{\R} 
\bigg(
\int_{1}^{\infty} s^4 |g(s,\zeta)-g_n(s,\zeta)|^2 \dd s
\bigg)^{\frac12}
\dd \zeta
=0. 
$$
Let $\omega_n$ be given by \eqref{rep1.proof.lem.vel.L1}--\eqref{rep4.proof.lem.vel.L1} with $g$ replaced by $g_n$ and $v_n:=\mathcal{V}[\omega_n]$ denote a smooth solution of \eqref{eq.LP1}. Also, let $q_n\in W^{1,2}_{{\rm loc}}(\overline{\Omega})$ be an associated pressure. From \eqref{est11.proof.lem.vel.L1}, we have
$$
\lim_{n\to\infty}
\|\widehat{\nabla v} - \widehat{\nabla v_n}\|_{L^1(L^{2})}
=0.
$$
Thus we see from \eqref{eq.proof.lem.vel.L1} that 
\begin{align*}
\begin{split}
&
\langle
\nabla v, \nabla \varphi 
\rangle
- \langle 
V\times\oprot v + f, 
\varphi 
\rangle \\
&= \lim_{n\to\infty} 
\langle 
-\Delta v_{n} - V\times\oprot v_n - f_n, 
\varphi 
\rangle \\
&= \lim_{n\to\infty} 
\langle 
\nabla q_{n}, 
\varphi 
\rangle = 0, 
\quad 
\varphi\in C^\infty_{0,\sigma}(\Omega), 
\end{split}
\end{align*}
where integration by parts is used in the first and last equalities. Hence $v$ is a weak solution of \eqref{eq.LP1} satisfying the desired estimate \eqref{est.lem.vel.L1}. This completes the proof of Lemma \ref{lem.vel.L1}.
\end{proof}
%

%
\begin{lemma}\label{lem.vel.seq}
Let $\gamma>2$ and $2<\rho<3$ satisfy $\rho\le\gamma$. Let $f\in \opFX(\R,L^\infty_{\rho+1})^3$ have no swirl and be given by, for some $\{a_m\}\in l^1(\Z,L^\infty_{\rho+1})^3$, 
$$
\hat{f} = \sum_{m\in\Z} a_m(r,\theta) \delta_m. 
$$
There is a weak solution $v\in \opFX(\R,L^\infty_{\rho-1})^3\cap W^{1,2}_{{\rm loc}}(\overline{\Omega})^3$ of \eqref{eq.LP1} having the form 
\begin{align}\label{eq.lem.vel.seq}
\begin{split}
\hat{v} = \sum_{m\in\Z} \hat{v}[a_m](r,\theta) \delta_m
\end{split}
\end{align}
and satisfying 
\begin{align}\label{est.lem.vel.seq}
\begin{split}
\|
(\partial_r,\partial_z)^{j} v
\|_{\opFX(L^\infty_{\rho-1+j})}
\lesssim
\|\{a_m\}\|_{l^1(L^\infty_{\rho+1})}, 
\quad j=0,1.
\end{split}
\end{align}
The implicit constant depends on $\gamma,\rho$.
\end{lemma}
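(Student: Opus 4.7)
The plan is to mirror the argument in Lemma \ref{lem.vel.L1}, replacing the absolutely continuous measure $\dd\Lambda$ by the counting measure on $\Z$ and the space $L^1(\R,\cdot)$ by $l^1(\Z,\cdot)$. Concretely, for each $m\in\Z$ I would construct a candidate vorticity mode $\hat{\omega}[a_m](r)$ using the representation \eqref{rep.omega1}--\eqref{rep.omega4} with $\zeta$ replaced by $m$, and fix the coefficient $c[a_m]$ analogously to \eqref{rep4.proof.lem.vel.L1} so that $d[\hat{\omega}[a_m]]=0$ in the sense of Lemma \ref{lem.strmfunc.seq}. By Proposition \ref{prop.BSL}(\ref{item2.lem.BSL}), this compatibility condition is exactly what ensures that the axisymmetric Biot--Savart field $\mathcal{V}[\omega]$ reconstructed from $\hat{\omega}=\sum_{m\in\Z}\hat{\omega}[a_m]\delta_m$ satisfies the no-slip condition $\mathcal{V}^z|_{\{1\}\times\R}=0$, which then lets Proposition \ref{prop.BSL}(\ref{item1.lem.BSL}) identify it with the desired velocity field $v$.

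For $m\ne 0$, the pointwise estimates on $\hat{\omega}[a_m](r)$ would be obtained by specializing the computation of Lemma \ref{lem.vel.L1} to the integer values $\zeta=m$: the integration-by-parts identity \eqref{rep3.proof.lem.vel.L1} remains valid verbatim, and the case analyses \eqref{est1.proof.lem.vel.L1}--\eqref{est10.proof.lem.vel.L1} produce implicit constants depending only on $\gamma$ and $\rho$, uniform in $|m|\ge 1$. The zeroth mode $m=0$ requires a separate but easier computation, since the modified Bessel kernels degenerate to the power functions $r^{-\gamma-1}$ and $r$; here a direct integration using the hypothesis $\rho\le\gamma$ yields $\|\hat{\omega}[a_0]\|_{L^\infty_\rho}\lesssim \|a_0\|_{L^\infty_{\rho+1}}$, with $c[a_0]$ chosen in the $\zeta=0$ limit. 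Summing the uniform bounds over $m$ yields
\begin{align*}
\sum_{m\in\Z}\|\hat{\omega}[a_m]\|_{L^\infty_\rho} \lesssim \|\{a_m\}\|_{l^1(L^\infty_{\rho+1})},
\end{align*}
so $\omega\in \opFX(\R,L^\infty_\rho)$, and Corollary \ref{cor.est.BSL} applied to $v:=\mathcal{V}[\omega]$ delivers the desired bound \eqref{est.lem.vel.seq}.

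To identify $v$ as a weak solution of \eqref{eq.LP1}, I would first handle the case of a finitely supported sequence $\{a_m\}$ whose components are smooth and compactly supported, for which $v$ is smooth, $\oprot(-\Delta v-V\times\oprot v-f)=0$ by construction, and an associated pressure $q\in W^{1,2}_{{\rm loc}}(\overline{\Omega})$ is provided by Proposition \ref{prop.pot} exactly as in \eqref{eq.proof.lem.vel.L1}. The general case then follows by a density argument: approximate $\{a_m\}$ by such finitely supported smooth sequences and pass to the limit against $\varphi\in C^\infty_{0,\sigma}(\Omega)$, using an $l^1(L^2)$-type control on $\widehat{\nabla v}$ parallel to \eqref{est11.proof.lem.vel.L1}.

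The main obstacle I anticipate is essentially bookkeeping: one must verify that the implicit constants in the Bessel-function case analysis of Lemma \ref{lem.vel.L1} are truly uniform in the real parameter $|\zeta|$, so that the same bounds descend to integer points and survive summation in $l^1$. The only genuinely new calculation is for the mode $m=0$, where the kernels are no longer Bessel functions but powers; this reduces to an elementary integration against $r^{-\gamma-1}$ and $r$ and is straightforward under the hypothesis $\rho\le\gamma$.
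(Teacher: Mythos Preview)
Your proposal is correct and follows essentially the same approach as the paper: the paper's proof also reduces to Lemma \ref{lem.vel.L1} for the modes $m\neq0$ and then treats only $\hat{f}=a_0\delta_0$ separately, deriving the explicit formula $\omega[a_0](r)=c[a_0]r^{-\gamma-1}+r^{-\gamma-1}\int_1^r s^{\gamma+1}a_0^z(s)\,\dd s$ with $c[a_0]=-\int_1^\infty t\,a_0^z(t)\,\dd t$ (after an integration by parts that removes the derivative in $\widehat{(\oprot f)}^\theta$), followed by the elementary power-law estimate and Corollary \ref{cor.est.BSL}. Your density argument via finitely supported smooth sequences is a natural counterpart to the paper's closing remark that the general case is handled ``in a similar manner as in the proof of Lemma \ref{lem.vel.L1}''.
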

%
%
\begin{proof}
The plan of the proof is similar to that for Lemma \ref{lem.vel.L1} based on the Biot-Savart law \eqref{def.ABSL}. Indeed, one can easily prove the lemma if $a_0=0$ by following the proof of Lemma \ref{lem.vel.L1}. Thus we may only consider the case where $\hat{f}=a_0\delta_0$. Set $\omega:=(\oprot v)^\theta$.

First we assume that $a_0\in C^{\infty}_0(D)^3$ is axisymmetric and has no swirl. Since 
$$
\widehat{(\oprot f)}^\theta(s,\zeta) 
= \widehat{(\oprot a_0)}^\theta(s,\zeta) 
= - \frac{\dd a_0^z}{\dd s}(s) \delta_{0} 
$$
and $\hat{\omega}=\omega[a_0](r)\delta_0$ solves \eqref{eq.ode.vol}, the formula \eqref{rep.omega1}--\eqref{rep.omega4} shows that 
\begin{align}\label{rep1.proof.lem.vel.seq}
\begin{split}
\omega[a_0](r)
&= c[a_0] r^{-\gamma-1}
+ r^{-\gamma-1} \int_{1}^{r} s^{\gamma+1} a_0^z(s) \dd s, \\
c[a_0]
&:= -\gamma
\int_{1}^{\infty} s^{-\gamma-1} 
\int_{1}^{s} t^{\gamma+1} a_0^z(t) \dd t \dd s. 
\end{split}
\end{align}
Here integration by parts is used in the first line. The constant $c_0[a_0]$ has been chosen so that $d[b_0]$ in Lemma \ref{lem.strmfunc.seq} with $b_0=\omega[a_0]$ is zero. Notice that $c[a_0]$ can be written as 
\begin{align}\label{rep2.proof.lem.vel.seq}
c[a_0]
= -\int_{1}^{\infty} t a_0^z(t) \dd t. 
\end{align}
Then, from \eqref{rep1.proof.lem.vel.seq}--\eqref{rep2.proof.lem.vel.seq} and 
$$
r^{-\gamma-1} \int_{1}^{r} s^{\gamma-\rho} \dd s
\lesssim
r^{-\rho}, 
\qquad
\int_{1}^{\infty} t^{-\rho} \dd t
\lesssim
1, 
$$
we see that 
\begin{align}\label{est.omega.proof.lem.vel.seq}
\begin{split}
\|\omega\|_{\opFX(L^\infty_{\rho})} 
\lesssim
\|a_0\|_{L^\infty_{\rho+1}}. 
\end{split}
\end{align}
Let $\mathcal{V} = \mathcal{V}[\omega]$ denote the Biot-Savart law \eqref{def.ABSL}. Corollary \ref{cor.est.BSL} shows that 
\begin{align}\label{est.V.proof.lem.vel.seq}
\begin{split}
\|(\partial_r,\partial_z)^{j} \mathcal{V}\|_{\opFX(L^\infty_{\rho-1+j})}
\lesssim
\|a_0\|_{L^\infty_{\rho+1}}, 
\quad j=0,1. 
\end{split}
\end{align}
Then a similar argument as in the proof of Lemma \ref{lem.vel.L1} leads to that $v=\mathcal{V}$ is a weak solution of \eqref{eq.LP1} with some pressure $q$, and moreover, gives the proof for the general case $a_0\in (L^\infty_{\rho+1})^3$. We omit the details for simplicity. The proof is complete. 
\end{proof}
%

The following lemma implies the uniqueness of weak solutions for \eqref{eq.LP1}. 
%
\begin{lemma}\label{lem.uniqueness}
Let $2<\rho<3$ and $v\in \opFX(\R,L^\infty_{\rho-1})^3\cap W^{1,2}_{{\rm loc}}(\overline{\Omega})^3$ be a weak solution of \eqref{eq.LP1} with $f^r=f^z=0$. Suppose that $\omega := (\oprot v)^\theta\in \opFX(\R,L^\infty_{\rho})$. Then $v=0$. 
\end{lemma}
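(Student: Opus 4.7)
The plan is to reduce the uniqueness question for $v$ to a uniqueness question for the scalar vorticity $\omega=(\oprot v)^\theta$, solve the resulting homogeneous ODE explicitly, and then close the argument by combining the decay of $\omega$ with the boundary information encoded in Proposition \ref{prop.BSL}(2).

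First I would note that $v^r{\bf e}_r+v^z{\bf e}_z$ has no swirl, is divergence-free, vanishes on $\{1\}\times\R$, and decays at horizontal infinity by the hypothesis $v\in\opFX(\R,L^\infty_{\rho-1})^3$. By Proposition \ref{prop.BSL}(\ref{item1.lem.BSL}) applied to this vector field, it suffices to prove $\omega=0$. Since $f^r=f^z=0$, Lemma \ref{lem.eq.vol} shows that $\omega$ satisfies the homogeneous equation
\begin{equation*}
-\Big(\Delta+\frac{\gamma}{r}\partial_r-\frac{1+\gamma}{r^2}\Big)\omega=0
\quad\text{in}\ (1,\infty)\times\R.
\end{equation*}
Taking Fourier transform in $z$ and writing $\hat{\omega}=h(r,\zeta)\dd\Lambda+\sum_{m\in\Z}b_m(r)\delta_m$ with $h\in L^1(\R,L^\infty_\rho)$ and $\{b_m\}\in l^1(\Z,L^\infty_\rho)$, each component solves \eqref{eq.ode.vol} with zero right-hand side. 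The linearly independent solutions are listed after \eqref{eq.ode.vol}, and the decay condition $\omega\in\opFX(\R,L^\infty_\rho)$ with $\rho>2$ rules out the growing branches: at $\zeta=0$ the solution $r$ is excluded, and at $\zeta\neq 0$ the solution $r^{-\gamma/2}I_{\gamma/2+1}(|\zeta|r)$ is excluded by its exponential growth as $r\to\infty$. Hence there exist scalars $c(\zeta)$ and $\{c_m\}_{m\in\Z}$ with
\begin{equation*}
h(r,\zeta)=c(\zeta)\,r^{-\gamma/2}K_{\gamma/2+1}(|\zeta|r),\qquad
b_m(r)=\begin{cases} c_m\,r^{-\gamma/2}K_{\gamma/2+1}(|m|r) & m\neq 0\\ c_0\,r^{-\gamma-1} & m=0.\end{cases}
\end{equation*}

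Next I would use the boundary condition. Because $v$ is divergence-free, axisymmetric without swirl, vanishes on $\{1\}\times\R$, and lives in $\opFX(\R,L^\infty_{\rho-1})^3\cap W^{1,2}_{{\rm loc}}(\overline{\Omega})^3$, Proposition \ref{prop.BSL}(\ref{item2.lem.BSL}) applies and yields $d[h](\zeta)=0$ and $d[b_m]=0$ for every $m\in\Z$. Plugging the representations of $h$ and $b_m$ into the formulas in Lemmas \ref{lem.strmfunc.L1}--\ref{lem.strmfunc.seq} gives
\begin{equation*}
d[h](\zeta)=\frac{I_1(|\zeta|)}{K_1(|\zeta|)}\,c(\zeta)\,F(\zeta),\qquad
d[b_m]=\frac{I_1(|m|)}{K_1(|m|)}\,c_m\,F(m)\ (m\neq 0),\qquad
d[b_0]=\frac{c_0}{2\gamma},
\end{equation*}
with $F(\zeta)$ the function in \eqref{def.F}. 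Since $I_1,K_1$ are strictly positive on $(0,\infty)$ and since $K_{\gamma/2+1}(z)>0$ and $K_1(z)>0$ on $(0,\infty)$ force $F(\zeta)>0$, each prefactor is nonzero. Consequently $c(\zeta)=0$ for a.e.\ $\zeta\neq 0$, $c_m=0$ for all $m\neq 0$, and $c_0=0$, so $\hat{\omega}\equiv 0$ and therefore $\omega=0$.

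Finally, Proposition \ref{prop.BSL}(\ref{item1.lem.BSL}) applied to $v=v^r{\bf e}_r+v^z{\bf e}_z$ with $(\oprot v)^\theta=0$ gives $v=\mathcal{V}[0]=0$, which completes the proof. The main potential obstacle is the step ensuring that the decay class $\opFX(\R,L^\infty_\rho)$ genuinely discards the $I$-Bessel branch in a measure-theoretic sense (rather than only pointwise in $\zeta$); this is handled because the condition $h(\cdot,\zeta)\in L^\infty_\rho$ for $\Lambda$-a.e.\ $\zeta$ forces the coefficient of the exponentially growing branch to vanish $\Lambda$-a.e., and the discrete part is treated separately for each $m$. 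The positivity $F(\zeta)>0$, which is where the specific sign properties of the modified Bessel functions enter, is the algebraic heart of the argument and is precisely the monotonicity ingredient highlighted in the introduction.
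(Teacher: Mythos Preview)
Your proof is correct and follows essentially the same route as the paper's: identify $v=\mathcal{V}[\omega]$ via Proposition~\ref{prop.BSL}, pass to the homogeneous ODE \eqref{eq.ode.vol} for $\hat{\omega}$, discard the growing branch by the decay hypothesis, and kill the remaining coefficients $c(\zeta),c_m$ by combining $d[h]=d[b_m]=0$ from Proposition~\ref{prop.BSL}(\ref{item2.lem.BSL}) with the strict positivity of $F(\zeta)$. The only minor omission is that before invoking Lemma~\ref{lem.eq.vol} (whose statement assumes a smooth solution) the paper first appeals to elliptic regularity for \eqref{eq.LP1} to upgrade the weak solution $v$ to a smooth one on $\Omega$; this is a one-line remark you should include, but it does not affect the substance of your argument.
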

%
%
\begin{proof}
By elliptic regularity for \eqref{eq.LP1}, we see that $v$ is smooth on $\Omega$ and that $\omega:=(\oprot v)^\theta$ solves \eqref{eq.lem.eq.vol} with $(\oprot f)^\theta=0$ in the classical sense. Moreover, the Fourier transform $\hat{\omega}$ solves \eqref{eq.ode.vol} with $\widehat{(\oprot f)}^\theta=0$. Since $\omega \in \opX(\R,L^\infty_{\rho})$, the formula \eqref{rep.omega1}--\eqref{rep.omega4} shows that
\begin{align}\label{rep1.proof.lem.uniqueness}
\begin{split}
\hat{\omega}
= h(r,\zeta) \dd \Lambda 
+ \sum_{m\in\Z} b_m(r) \delta_m, 
\end{split}
\end{align}
where, for some function $c(\zeta)$ and series $\{c_m\}_{m\in\Z}$, 
\begin{align}\label{rep2.proof.lem.uniqueness}
\begin{split}
h(r,\zeta)
&:=
c(\zeta) r^{-\frac{\gamma}{2}} K_{\frac{\gamma}2+1}(|\zeta|r), 
\quad \zeta\neq0, \\
b_m(r)
&:=
\left\{
\begin{array}{ll}
c_0 r^{-\gamma-1}, 
&\mbox{if}\ m=0\\ [5pt]
c_m r^{-\frac{\gamma}{2}} K_{\frac{\gamma}2+1}(|m|r) 
&\mbox{if}\ m\neq0. 
\end{array}\right.
\end{split}
\end{align}

Then Proposition \ref{prop.BSL} shows that $v=\mathcal{V}[\omega]$ with the Biot-Savart law \eqref{def.ABSL}. Moreover, due to the assumption on $v$, we see that $d[h]=0$ and $d[b_m]=0$ for all $m\in \Z$. Hence, by the definition of $d[\,\cdot\,]$ in Lemmas \ref{lem.strmfunc.L1}--\ref{lem.strmfunc.seq}, we find that $c(\zeta)$ and $\{c_m\}_{m\in\Z}$ satisfy 
$$
c(\zeta) F(\zeta)=0, 
\quad \zeta\neq0
$$
and 
$$
\frac{c_0}{\gamma}=0 
\qquad \text{and} \qquad
c_m F(m)=0, 
\quad m\in \Z. 
$$
Here $F(\zeta)$ is the function defined in \eqref{def.F}. Since $F(\zeta)$ is nonnegative, these conditions are equivalent to $c(\zeta)=0$ and $c_m=0$ for all $m\in\Z$. Hence we obtain $\hat{\omega}=0$ because of \eqref{rep1.proof.lem.uniqueness}--\eqref{rep2.proof.lem.uniqueness} and hence $\omega=0$. Then the assertion follows from $v=\mathcal{V}[\omega]=0$. 
\end{proof}
%

%
\begin{proofx}{Proposition \ref{prop.LP1}}
Denote 
\begin{align}
\hat{f}
&= g(r,\theta,\zeta) \dd \Lambda 
+ \sum_{m\in\Z} a_m(r,\theta) \delta_m. 
\end{align}

\noindent ({\bf Existence}) 
Lemmas \ref{lem.vel.L1}--\ref{lem.vel.seq} show that the inverse Fourier transform of 
$$
\hat{v}
= \hat{v}[g](r,\theta,\zeta) \dd \Lambda 
+ \sum_{m\in\Z} \hat{v}[a_m](r,\theta) \delta_m, 
$$
where $\hat{v}[g]$ and $\hat{v}[a_m]$ are the vector fields in the lemmas, is a weak solution of \eqref{eq.LP1} in $\opFX(\R,L^\infty_{\rho-1})^3$. Moreover, $v$ satisfies the estimate \eqref{est.prop.LP1} thanks to \eqref{est.lem.vel.L1} and \eqref{est.lem.vel.seq}.

\noindent ({\bf Uniqueness})
This is implied by Lemma \ref{lem.uniqueness}. This completes the proof. 
\end{proofx}
%

\subsection{Estimate of swirl}
\label{subsec.swirl}

We consider the single equation \eqref{eq.LP2} for the swirl $v^\theta$ in this subsection.

The Fourier transform $\hat{v}^\theta=\hat{v}^\theta(r,\zeta)$ solves the boundary value problem
\begin{align}\label{eq.ode.swirl}
-\frac{\dd^2 \hat{v}^\theta}{\dd r^2}
- \frac{1+\gamma}{r} \frac{\dd \hat{v}^\theta}{\dd r} 
+ \Big(\zeta^2 + \dfrac{1-\gamma}{r^2} \Big) \hat{v}^\theta
= \hat{f}^\theta, 
\quad r>1 
\end{align}
with 
\begin{align}\label{bc.ode.swirl}
\hat{v}^\theta(1) = 0, 
\qquad
\lim_{r\to\infty} \hat{v}^\theta(r) = 0. 
\end{align}
The linearly independent solutions of the homogeneous equation are
$$
\left\{
\begin{array}{ll}
r^{-\frac{\gamma}2} 
K_{|\frac{\gamma}2-1|}(|\zeta|r) 
\mkern9mu \text{and} \mkern9mu 
r^{-\frac{\gamma}2} 
I_{|\frac{\gamma}2-1|}(|\zeta|r) 
&\mbox{if}\ \zeta\neq0 \\ [5pt]
r^{-\gamma+1}
\mkern9mu \text{and} \mkern9mu 
r^{-1}
&\mbox{if}\ \zeta=0 
\end{array}
\right. 
$$
with the Wronskian 
$$
\left\{
\begin{array}{ll}
r^{-\frac{\gamma}2-1}
&\mbox{if}\ \zeta\neq0 \\ [5pt]
(\gamma-2)r^{-\gamma-1}
&\mbox{if}\ \zeta=0. 
\end{array}
\right. 
$$
Thus, if the Fourier transform of $f^\theta$ is given by 
$$
\hat{f}^\theta
= g^\theta(r,\zeta) \dd \Lambda 
+ \sum_{m\in\Z} a^\theta_m(r) \delta_m, 
$$
the solution $\hat{v}^\theta$ of \eqref{eq.ode.swirl} belonging to $\bigcup_{\rho\ge0} \opX(\R,L^\infty_{\rho})$ has the form 
\begin{align}\label{rep.vtheta1}
\begin{split}
\hat{v}^\theta(r)
=
\int_1^\infty 
(\hat{f}^\theta\lfloor \sigma_7)(r,s) \dd s 
+ \int_1^r 
(\hat{f}^\theta\lfloor \sigma_8)(r,s) \dd s 
+ \int_r^\infty 
(\hat{f}^\theta\lfloor \sigma_9)(r,s) \dd s. 
\end{split}
\end{align}
Here $\hat{f}^\theta\lfloor \sigma_i$ denotes the Borel measure 
\begin{align}\label{rep.vtheta2}
\begin{split}
\hat{f}^\theta\lfloor \sigma_i(r,s,\mathcal{O})
&=
\int_{\mathcal{O}}
\sigma_i(r,s,\zeta) g^\theta(s,\zeta) \dd \Lambda \\
&\quad 
+ \sum_{m\in\Z\cap\mathcal{O}}
\sigma_i(r,s,m) a^\theta_m(s) \delta_m, 
\quad \mathcal{O}\in \mathcal{B}(\R) 
\end{split}
\end{align}
and $\sigma_7, \sigma_8, \sigma_9$ are the functions 
\begin{align}\label{rep.vtheta3}
\begin{split}
\sigma_7(r,s,\zeta) 
&= 
\left\{
\begin{array}{ll}
-\dfrac{I_{|\frac{\gamma}2-1|}(|\zeta|)}{K_{|\frac{\gamma}2-1|}(|\zeta|)} 
r^{-\frac{\gamma}{2}}
K_{|\frac{\gamma}2-1|}(|\zeta|r) 
s^{\frac{\gamma}2+1}
K_{|\frac{\gamma}2-1|}(|\zeta|s) 
&\mbox{if}\ \zeta\neq0 \\ [10pt]
\dfrac{1}{\gamma-2}
r^{-\gamma+1}
s^{2}
&\mbox{if}\ \zeta=0
\end{array}
\right. \\
\sigma_8(r,s,\zeta) 
&= 
\left\{
\begin{array}{ll}
r^{-\frac{\gamma}{2}}
K_{|\frac{\gamma}2-1|}(|\zeta|r)
s^{\frac{\gamma}{2}+1}
I_{|\frac{\gamma}2-1|}(|\zeta|s)
&\mbox{if}\ \zeta\neq0 \\ [10pt]
\dfrac{1}{\gamma-2}
r^{-\gamma+1}
s^{\gamma}
&\mbox{if}\ \zeta=0
\end{array}
\right. \\
\sigma_9(r,s,\zeta) 
&= 
\left\{
\begin{array}{ll}
r^{-\frac{\gamma}2}
I_{|\frac{\gamma}2-1|}(|\zeta|r)
s^{\frac{\gamma}2+1}
K_{|\frac{\gamma}2-1|}(|\zeta|s)
&\mbox{if}\ \zeta\neq0 \\ [10pt]
\dfrac{1}{\gamma-2}
r^{-1}s^2
&\mbox{if}\ \zeta=0. 
\end{array}
\right. 
\end{split}
\end{align}
A representation of $\hat{v}^\theta$ is obtained by the inverse Fourier transform of \eqref{rep.vtheta1}--\eqref{rep.vtheta3}.

%
\begin{proposition}\label{prop.LP2}
Let $\gamma>2$ and $2<\rho<3$ satisfy $\rho\le\gamma$. Let $f^\theta\in \opFX(\R,L^\infty_{\rho+1})$. There is a unique weak solution $v^\theta\in \opFX(\R,L^\infty_{\rho-1})\cap W^{1,2}_{{\rm loc}}(\overline{\Omega})$ of \eqref{eq.LP2} satisfying 
\begin{align}\label{est.prop.LP2}
\|(\partial_r,\partial_z)^j v^\theta\|_{\opFX(L^\infty_{\rho-1+j})} 
\lesssim
\|f^\theta\|_{\opFX(L^\infty_{\rho+1})}, 
\quad j=0,1. 
\end{align}
The implicit constant depends on $\gamma, \rho$.
\end{proposition}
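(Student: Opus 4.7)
The argument parallels the treatment of \eqref{eq.LP1} in Subsection \ref{subsec.rad.vert}, but is considerably simpler because \eqref{eq.LP2} is a scalar elliptic equation whose Green's function is already laid out in the representation \eqref{rep.vtheta1}--\eqref{rep.vtheta3}; no Biot-Savart detour is required.

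\textbf{Existence and estimate.}
Decompose
$
\hat{f}^\theta = g^\theta(r,\zeta)\dd\Lambda + \sum_{m\in\Z} a_m^\theta(r)\delta_m
$
with $g^\theta\in L^1(\R,L^\infty_{\rho+1})$ and $\{a_m^\theta\}\in l^1(\Z,L^\infty_{\rho+1})$, and define $\hat{v}^\theta$ by plugging each summand into \eqref{rep.vtheta1}--\eqref{rep.vtheta3}. By linearity it suffices to estimate the Lebesgue part and the series part separately. For the Lebesgue part, since $\gamma>2$ the Bessel order is $\nu:=\gamma/2-1>0$, so the functions $K_\nu(|\zeta|r)$ and $I_\nu(|\zeta|r)$ are positive and the asymptotic estimates of Lemma \ref{lem.MBF} are available without sign or singularity issues. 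Proceeding as in the proof of Lemma \ref{lem.vel.L1}, I split the frequency axis into $0<|\zeta|\le 1$ and $|\zeta|\ge 1$ and, within each regime, bound $|\sigma_i(r,s,\zeta) g^\theta(s,\zeta)|$ by comparing the pairs $(r,|\zeta|^{-1})$ and $(s,|\zeta|^{-1})$; then integrate out the $s$-variable and track the resulting $r$-weight. The hypothesis $\rho\le\gamma$ ensures that the interior integrals of the form $r^{-\gamma/2}\int_1^r s^{\gamma/2-\rho}\dd s$ and $|\zeta|^{-1}\int_{|\zeta|^{-1}}^r s^{-\rho-1} e^{-|\zeta|(r-s)} \dd s$ converge to a multiple of the target weight $r^{-\rho+1}$ uniformly in $\zeta$. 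Summing over the three pieces $\sigma_7,\sigma_8,\sigma_9$ and integrating in $\zeta$ yields
\begin{align*}
\|\hat{v}^\theta\|_{L^1(L^\infty_{\rho-1})}\lesssim \|g^\theta\|_{L^1(L^\infty_{\rho+1})}.
\end{align*}

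\textbf{Series part.}
For each integer $m$, the same case analysis applies when $m\neq 0$, with $|m|$ in place of $|\zeta|$, and the implicit constants are uniform in $m$ because the Bessel asymptotics used depend only on whether $|m|\lessgtr 1$. The mode $m=0$ is explicit: $\hat{v}^\theta(r)$ is built from the basis $r^{-\gamma+1}$, $r^{-1}$ via \eqref{rep.vtheta3}, and the bounds $r^{-\gamma+1}\int_1^r s^{\gamma-\rho}\dd s \lesssim r^{-\rho+1}$ and $r^{-1}\int_r^\infty s^{-\rho}\dd s\lesssim r^{-\rho}$, valid because $2<\rho\le\gamma$, immediately give $\|\hat{v}^\theta(\cdot,0)\|_{L^\infty_{\rho-1}}\lesssim \|a_0^\theta\|_{L^\infty_{\rho+1}}$. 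Summing in $m$ produces the $l^1$-bound. Combining the two parts yields \eqref{est.prop.LP2} for $j=0$; the bounds for $j=1$ follow from differentiating the representation and using the derivative formulas \eqref{eq.rel} for modified Bessel functions, while the case $j=2$ is encoded in the equation itself.

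\textbf{Uniqueness and regularity.}
Elliptic regularity applied to \eqref{eq.LP2} promotes any weak solution to a classical one on $\Omega$, and hence its Fourier transform $\hat{v}^\theta$ solves the linear ODE \eqref{eq.ode.swirl} with the boundary conditions \eqref{bc.ode.swirl}. If $f^\theta=0$, any element of $\bigcup_{\rho\ge 0}\opX(\R,L^\infty_\rho)$ solving the homogeneous equation must, for each $\zeta$, be a multiple of $r^{-\gamma/2}K_{\gamma/2-1}(|\zeta|r)$ (the unique homogeneous solution decaying as $r\to\infty$), and the condition $\hat{v}^\theta(1)=0$ forces that multiple to vanish; an analogous argument handles $\zeta=0$ and the Dirac atoms. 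Hence $\hat{v}^\theta\equiv 0$ and uniqueness follows.

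\textbf{Main obstacle.}
The heart of the work is the patient case-by-case estimate of $\sigma_7,\sigma_8,\sigma_9$ against the weighted norm, and the verification that all bounds compatibly yield the target decay $r^{-\rho+1}$ under the joint assumption $2<\rho<3$, $\rho\le\gamma$. Unlike \eqref{eq.LP1}, there is no analogue of the subtle monotonicity/positivity issue involving $F(\zeta)$ in \eqref{def.F}, because no compatibility condition on $\hat{v}^\theta$ needs to be enforced beyond the boundary values; this is the essential simplification over the $(v^r,v^z)$-system.
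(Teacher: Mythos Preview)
Your proof is correct and follows the same route the paper sketches (the paper merely writes ``The uniqueness is obvious. The estimate \eqref{est.prop.LP2} can be proved in a procedure similar to that in the proof of Proposition \ref{prop.LP1}. We omit the details for simplicity.''); you have supplied exactly those omitted details via the representation \eqref{rep.vtheta1}--\eqref{rep.vtheta3} and the same case-by-case Bessel estimates. The only extraneous remark is the mention of $j=2$, which the proposition does not require.
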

%
%
\begin{proof}
The uniqueness is obvious. The estimate \eqref{est.prop.LP2} can be proved in a procedure similar to that in the proof of Proposition \ref{prop.LP1}. We omit the details for simplicity. 
\end{proof}
%

\section{Proof of Theorem \ref{thm.main}}
\label{sec.pr.thm}

This section is devoted to the proof of Theorem \ref{thm.main}. However, since the proof is standard using the Banach fixed-point theorem, we only present the outline for simplicity.
%
\begin{proofx}{Theorem \ref{thm.main}}
Define the Banach space 
\begin{align*}
{\mathcal X}_\rho
=\Big\{w\in \opFX(\R,L^\infty_{\rho-1})^3
~\Big|~ 
w|_{\{1\}\times\R}=0, 
\mkern9mu
(\partial_r,\partial_z) w \in \opFX(\R,L^\infty_{\rho})^{2\times3}
\Big\} 
\end{align*}
with the norm $\|w\|_{{\mathcal X}_\rho}:=
\|w\|_{\opFX(L^\infty_{\rho-1})} + \|(\partial_r,\partial_z) w\|_{\opFX(L^\infty_{\rho})}$. By Proposition \ref{prop.LP}, for any given $f\in\opFX(\R,L^\infty_{\rho+1})^3$ and $w\in {\mathcal X}_{\rho}$, there is a unique weak solution $v_w$ to \eqref{eq.LP} with the components of $-w\cdot\nabla w + f$ as an external force. Moreover, $v_w$ satisfies 
\begin{align*}
\begin{split}
\|v_w\|_{\opFX(L^\infty_{\rho-1+j})} 
&\lesssim
\|-w\cdot\nabla w\|_{\opFX(L^\infty_{\rho+1})} 
+ \|f\|_{\opFX(L^\infty_{\rho+1})} \\
&\lesssim
\|w\|_{\opFX(L^\infty_{\rho-1})} 
\|(\partial_r,\partial_z) w\|_{\opFX(L^\infty_{\rho})} 
+ \|f\|_{\opFX(L^\infty_{\rho+1})}.
\end{split}
\end{align*}
Here Lemma \ref{lem.X.FX} and a trivial inequality $\rho+1\le 2\rho-1$ are used. Thus the mapping $T:{\mathcal X}_\rho\ni w\mapsto v_w\in {\mathcal X}_\rho$ is well-defined. It is easy to check that, if $\|f\|_{\opFX(L^\infty_{\rho+1})}$ and $\delta$ are sufficiently small depending on $\gamma,\rho$, then $T$ is a contraction on the closed ball $\mathcal{B}_\delta\subset{\mathcal X}_\rho$ centered at the origin with radius $\delta$. Hence the existence of weak solutions of \eqref{intro.eq.NP} unique in $\mathcal{B}_\delta$ follows from the Banach fixed-point theorem. The assertions for the Navier-Stokes system \eqref{intro.eq.NS} are obvious. This completes the proof of Theorem \ref{thm.main}. 
\end{proofx}
%

\appendix

\section{Auxiliary lemmas}
\label{app.lem}

This appendix collects the lemmas needed in Sections \ref{sec.ABSL} and \ref{sec.LP}. References are \cite{Watbook1944,AARbook1999}.

\subsection*{Modified Bessel function}

The modified Bessel function of the first kind $I_\nu(z)$ of order $\nu$ is defined by 
\begin{align}\label{def.I}
\begin{split}
I_\nu(z) 
= 
\Big(\frac{z}{2}\Big)^\nu 
\sum_{k=0}^{\infty} 
\frac{1}{k!\Gamma(\nu+k+1)} \Big(\frac{z}{2}\Big)^{2k}, 
\quad 
z\in \C\setminus (-\infty,0], 
\end{split}
\end{align}
where $\Gamma(z)$ denotes the Gamma function, the second kind $K_\nu(z)$ of order $\nu\notin\Z$ is by 
\begin{align}\label{def.K}
K_\nu(z) 
= 
\frac{\pi}{2}
\frac{I_{-\nu}(z) - I_\nu(z)}{\sin(\nu \pi)}, 
\quad 
z\in \C\setminus (-\infty,0],
\end{align}
and $K_n(z)$ of order $n\in\Z$ is by the limit of $K_\nu(z)$ in \eqref{def.K} as $\nu\to n$. It is well known that $I_\nu(z)$ grows exponentially and $K_\nu(z)$ decays exponentially as $z\to\infty$ in $\{\Re z>0\}$; see \cite[Section 4.12]{AARbook1999}. Moreover, $K_\nu(z)$ and $I_\nu(z)$ are linearly independent solutions of 
$$
-\frac{\dd^2 \omega}{\dd z^2} 
- \frac{1}{z} \frac{\dd \omega}{\dd z} 
+ \Big(1+\frac{\nu^2}{z^2}\Big) \omega = 0 
$$
and the Wronskian is
\begin{align}\label{def.W}
\det 
\left(
\begin{array}{cc}
K_\nu(z) 
& I_\nu(z) \\ [5pt]
\displaystyle{\frac{\dd K_\nu}{\dd z}(z)} 
& \displaystyle{\frac{\dd I_\nu}{\dd z}(z)}
\end{array}
\right)
=
\frac1z. 
\end{align}
It is known that $I_\nu(z)$ and $K_\nu(z)$ satisfy
\begin{align}\label{eq.rel}
\begin{split}
\frac{\dd I_\nu}{\dd z}(z)
&
=\frac{\nu}{z} I_\nu(z) + I_{\nu+1}(z)
=-\frac{\nu}{z} I_\nu(z) + I_{\nu-1}(z), \\
\frac{\dd K_\nu}{\dd z}(z) 
&
=\frac{\nu}{z} K_\nu(z) - K_{\nu+1}(z)
=-\frac{\nu}{z} K_\nu(z) - K_{\nu-1}(z). 
\end{split}
\end{align}

The following lemma collects the properties of $I_\nu(r)$ and $K_\nu(r)$ with $\nu>0$ and $r>0$. Each of them easily follows from the formulas that can be found in \cite{Watbook1944,AARbook1999}. 
%
\begin{lemma}\label{lem.MBF}
Let $\nu>0$. Then the following hold.
\begin{enumerate}[(1)]
\item\label{item1.lem.MBF}
$I_\nu(r)$ and $K_\nu(r)$ are positive when $r>0$.

\item\label{item2.lem.MBF}
More precisely, when $0<r\le1$, 
\begin{align*}
\begin{split}
K_\nu(r)
\approx
r^{-\nu}, 
\qquad 
I_\nu(r)
\approx
r^{\nu}
\end{split}
\end{align*}
and when $r\ge1$, 
\begin{align*}
\begin{split}
K_\nu(r)
\approx
r^{-\frac12} e^{-r}, 
\qquad
I_\nu(r)
\approx
r^{-\frac12} e^{r}. 
\end{split}
\end{align*}
The implicit constants depend on $\nu$. 
\end{enumerate}
\end{lemma}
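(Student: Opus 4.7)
The plan is to prove both assertions by combining the series and connection formulas already introduced in \eqref{def.I}–\eqref{def.W} with the classical integral representation
\begin{equation*}
K_\nu(r) = \int_0^\infty e^{-r\cosh t} \cosh(\nu t) \, dt, \qquad r>0, \; \nu \in \R,
\end{equation*}
which is standard and available from \cite{Watbook1944,AARbook1999}. All the heavy lifting is done by this integral representation together with \eqref{def.I}.

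For assertion \eqref{item1.lem.MBF} (positivity), strict positivity of $I_\nu(r)$ is immediate from \eqref{def.I}: when $\nu>0$ and $r>0$, one has $\Gamma(\nu+k+1)>0$ for every $k\ge 0$, so every term of the series is strictly positive. Positivity of $K_\nu(r)$ follows at once from the integral representation above, whose integrand is strictly positive for $r>0$ and any real $\nu$.

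For the small-$r$ asymptotics in \eqref{item2.lem.MBF}, isolating the leading term of \eqref{def.I} gives
\begin{equation*}
I_\nu(r) = \frac{1}{\Gamma(\nu+1)}(r/2)^\nu \bigl(1 + O_\nu(r^2)\bigr), \qquad 0<r\le 1,
\end{equation*}
which yields $I_\nu(r)\approx r^\nu$ on this range. For $K_\nu(r)$ with $\nu\notin \Z$, one substitutes the corresponding expansions of $I_\nu$ and $I_{-\nu}$ into the connection formula \eqref{def.K}: the contribution $(r/2)^{-\nu}/(\Gamma(1-\nu)\sin(\nu\pi))$ dominates as $r\to 0^+$, producing $K_\nu(r)\approx r^{-\nu}$. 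For $\nu=n\in \N$, either take the limit $\nu\to n$ in \eqref{def.K} (by L'Hôpital's rule in $\nu$) or quote the well-known series expansion of $K_n$; the leading term remains $\tfrac12 (n-1)!\,(r/2)^{-n}$ when $n\ge 1$, so $K_n(r)\approx r^{-n}$ on $(0,1]$.

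For the large-$r$ asymptotics, I would apply Laplace's method to the integral representation of $K_\nu$: writing $\cosh t = 1 + t^2/2 + O(t^4)$ and rescaling $t = s/\sqrt{r}$ shows the dominant contribution concentrates near $t=0$ and yields
\begin{equation*}
K_\nu(r) = \sqrt{\frac{\pi}{2r}}\, e^{-r}\bigl(1 + O_\nu(r^{-1})\bigr), \qquad r\ge 1,
\end{equation*}
hence $K_\nu(r)\approx r^{-1/2} e^{-r}$. For $I_\nu(r)$ with $\nu>-1/2$, I would use Poisson's representation
\begin{equation*}
I_\nu(r) = \frac{(r/2)^\nu}{\sqrt{\pi}\,\Gamma(\nu+1/2)} \int_{-1}^1 e^{-rt}(1-t^2)^{\nu-1/2}\, dt
\end{equation*}
and apply Laplace's method near $t=-1$, obtaining $I_\nu(r) = (2\pi r)^{-1/2} e^r(1 + O_\nu(r^{-1}))$, i.e.\ $I_\nu(r)\approx r^{-1/2} e^r$; the remaining case $0<\nu\le 1/2$ is reduced to this range by the recurrence \eqref{eq.rel}. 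Since $I_\nu,K_\nu$ are continuous and strictly positive on $(0,\infty)$, the implicit constants can be chosen uniformly over the two ranges $(0,1]$ and $[1,\infty)$ by matching at $r=1$.

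The main (modest) obstacle is keeping the implicit constants $\nu$-dependent but $r$-uniform in the Laplace-method steps: one must bound the tails $|t|\gg r^{-1/2}$ in the $K_\nu$ integral and $|t+1|\gg r^{-1}$ in the $I_\nu$ integral by terms exponentially smaller than the leading term, so that the two-sided $\approx$ bounds hold for every $r\ge 1$ and not merely asymptotically. Once this tail estimate is recorded, the rest of the proof is a direct collection of formulas from \cite{Watbook1944,AARbook1999}.
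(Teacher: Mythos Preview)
Your argument is correct and in fact considerably more detailed than the paper's own treatment, which does not give a proof at all but simply states that each assertion ``easily follows from the formulas that can be found in \cite{Watbook1944,AARbook1999}.'' One harmless slip: Poisson's integral representation for $I_\nu$ is already valid for every $\nu>-\tfrac12$, hence for all $\nu>0$, so there is no ``remaining case $0<\nu\le\tfrac12$'' requiring the recurrence \eqref{eq.rel}.
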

%

\subsection*{Estimate of the function in \eqref{def.F}}

The function $F(\zeta)$ in \eqref{def.F} is estimated as follows. The key to the proof is to utilize the monotonicity of integration due to the positivity of $K_\nu(r)$ with $\nu>0$ and $r>0$. 
%
\begin{lemma}\label{lem.est.F}
Let $\gamma>0$. Then 
\begin{align*}
\begin{split}
F(\zeta)
\gtrsim
\left\{
\begin{array}{ll}
|\zeta|^{-\frac{\gamma}{2}-2}
&\mbox{if}\ 0 < |\zeta| \le1 \\ [5pt]
|\zeta|^{-2} e^{-2|\zeta|}
&\mbox{if}\ |\zeta| \ge 1.  
\end{array}\right.
\end{split}
\end{align*}
The implicit constant depends on $\gamma$.
\end{lemma}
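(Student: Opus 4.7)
The plan is to exploit the positivity of $K_\nu(r)$ for $\nu>0$ and $r>0$ (Lemma \ref{lem.MBF}(\ref{item1.lem.MBF})), which means every restriction of the integral defining $F(\zeta)$ to a sub-interval of $(1,\infty)$ is a legitimate lower bound. It then suffices to pick, in each regime of $|\zeta|$, a sub-interval on which the arguments $|\zeta|s$ of the two Bessel factors stay entirely inside a single asymptotic regime in Lemma \ref{lem.MBF}(\ref{item2.lem.MBF}), and to integrate the resulting power/exponential estimate explicitly.

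For the small-frequency regime $0<|\zeta|\le 1/2$, I would restrict the integration to $s\in[2,|\zeta|^{-1}]$, so that $|\zeta|s\in[2|\zeta|,1]\subset(0,1]$. The small-argument asymptotics give
\begin{align*}
K_1(|\zeta|s) K_{\frac{\gamma}{2}+1}(|\zeta|s)
\gtrsim
(|\zeta|s)^{-1}(|\zeta|s)^{-\frac{\gamma}{2}-1}
=|\zeta|^{-\frac{\gamma}{2}-2}s^{-\frac{\gamma}{2}-2},
\end{align*}
so the integrand is $\gtrsim |\zeta|^{-\gamma/2-2}s^{-\gamma-1}$. Integrating in $s$ from $2$ to $|\zeta|^{-1}$ yields a factor $\frac{1}{\gamma}(2^{-\gamma}-|\zeta|^\gamma)\ge \frac{1}{\gamma}(2^{-\gamma}-2^{-\gamma}\cdot\tfrac12)>0$, producing the desired $F(\zeta)\gtrsim |\zeta|^{-\gamma/2-2}$.

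For the large-frequency regime $|\zeta|\ge 1$, I would restrict instead to $s\in[1,2]$, so that $|\zeta|s\in[|\zeta|,2|\zeta|]\subset[1,\infty)$, and apply the large-argument asymptotics $K_\nu(r)\approx r^{-1/2}e^{-r}$. This gives
\begin{align*}
s^{-\frac{\gamma}{2}+1}K_1(|\zeta|s) K_{\frac{\gamma}{2}+1}(|\zeta|s)
\gtrsim s^{-\frac{\gamma}{2}}|\zeta|^{-1}e^{-2|\zeta|s},
\end{align*}
and $\int_1^2 e^{-2|\zeta|s}\,ds=\frac{1}{2|\zeta|}(e^{-2|\zeta|}-e^{-4|\zeta|})\gtrsim |\zeta|^{-1}e^{-2|\zeta|}$ because $e^{-4|\zeta|}\le e^{-2}e^{-2|\zeta|}$ for $|\zeta|\ge 1$. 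Combining these gives $F(\zeta)\gtrsim |\zeta|^{-2}e^{-2|\zeta|}$ on this range.

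The intermediate band $|\zeta|\in[1/2,1]$ is handled by a compactness argument: $F$ is continuous and, by the positivity of $K_1$ and $K_{\gamma/2+1}$, strictly positive on $(0,\infty)$, hence bounded below by a positive constant on $[1/2,1]$; since the target weight $|\zeta|^{-\gamma/2-2}$ is bounded on the same interval, this closes the estimate with an implicit constant depending only on $\gamma$. I do not expect any serious obstacle here — the proof is essentially the bookkeeping of the two asymptotic regimes of $K_\nu$ — and the only thing to be careful about is choosing the sub-intervals so that the transition point $|\zeta|s=1$ lies outside of them, which is exactly what the choices $[2,|\zeta|^{-1}]$ and $[1,2]$ ensure.
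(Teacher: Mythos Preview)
Your approach matches the paper's: restrict the integral to a convenient sub-interval by positivity and apply the asymptotics of Lemma \ref{lem.MBF}. The paper uses $[1,|\zeta|^{-1}]$ for small $|\zeta|$ and $[1,\infty)$ for large $|\zeta|$, but the mechanism is the same.

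There is one arithmetic slip in your small-frequency case: at $|\zeta|=1/2$ the interval $[2,|\zeta|^{-1}]$ degenerates to the single point $\{2\}$, so the integral vanishes, and your claimed inequality $2^{-\gamma}-|\zeta|^\gamma \ge 2^{-\gamma}-2^{-\gamma}\cdot\tfrac12$ fails there (the left-hand side is $0$, since $|\zeta|^\gamma=2^{-\gamma}$). The fix is immediate: run the direct estimate on, say, $0<|\zeta|\le 1/3$, so that $[2,|\zeta|^{-1}]\supset[2,3]$ and $\int_2^{|\zeta|^{-1}}s^{-\gamma-1}\,\dd s\ge\int_2^3 s^{-\gamma-1}\,\dd s>0$ uniformly, and extend your compactness argument to cover the band $[1/3,1]$ instead of $[1/2,1]$. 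With that adjustment the proof is complete.
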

%
%
\begin{proof}
We see from Lemma \ref{lem.MBF} that, when $0<|\zeta|\le 1$, 
\begin{align*}
\begin{split}
\int_1^{|\zeta|^{-1}}
s^{-\frac{\gamma}{2}+1} 
K_1(|\zeta|s) 
K_{\frac{\gamma}2+1}(|\zeta|s)
\dd s
&\gtrsim
\int_{1}^{|\zeta|^{-1}}
|\zeta|^{-\frac{\gamma}{2}-2}
s^{-\gamma-1} 
\dd s \\
&\gtrsim
|\zeta|^{-\frac{\gamma}{2}-2}, 
\end{split}
\end{align*}
and when $|\zeta|\ge 1$, 
\begin{align*}
\begin{split}
\int_{1}^{\infty}
s^{-\frac{\gamma}{2}+1} 
K_1(|\zeta|s) 
K_{\frac{\gamma}2+1}(|\zeta|s)
\dd s
&\gtrsim
\int_{1}^{\infty}
|\zeta|^{-1}
s^{-\frac{\gamma}{2}}
e^{-2|\zeta|s}
\dd s \\
&\gtrsim
|\zeta|^{-2} e^{-2|\zeta|}. 
\end{split}
\end{align*}
Hence the desired estimate follows. 
\end{proof}
%

\section{Flows under rotation and suction}
\label{app.case.rot}

In this appendix, we discuss the case where $V$ is defined not in \eqref{def.V} but in \eqref{def.V.rot}.

Using the formulas \eqref{formula1} and \eqref{formula4} in Subsection \ref{subsec.op}, we have
\begin{align*}
\begin{split}
V\times\oprot v
&=
V^\theta (\oprot v)^z {\bf e}_r 
- V^r (\oprot v)^z {\bf e}_\theta 
+ \{V^r (\oprot v)^\theta - V^\theta (\oprot v)^r\} {\bf e}_z \\
&=
\frac{\alpha}{r^2} \partial_r (rv^\theta) {\bf e}_r
+ \frac{\gamma}{r^2} \partial_r (rv^\theta) {\bf e}_\theta
+ \Big\{
-\frac{\gamma}{r} (\partial_z v^r - \partial_r v^z) 
+ \frac{\alpha}{r} \partial_z v^\theta
\Big\}
{\bf e}_z. 
\end{split}
\end{align*}
The linearized problem of \eqref{intro.eq.PNP} in the introduction around $V$ is now given by 
\begin{equation}\label{eq.LP.rot}
\left\{
\begin{array}{ll}
-\Big(
\Delta - \dfrac{1}{r^2} 
\Big) v^r 
- \dfrac{\alpha}{r^2} \partial_r (rv^\theta) 
+ \partial_r q = f^r
&\mbox{in}\ (1,\infty)\times\R \\ [10pt]
-\Big(\Delta
+ \dfrac{\gamma}{r} \partial_r
- \dfrac{1-\gamma}{r^2}
\Big) v^\theta 
= f^\theta
&\mbox{in}\ (1,\infty)\times\R \\ [10pt]
-\Delta v^z 
+ \dfrac{\gamma}{r} (\partial_z v^r - \partial_r v^z)
- \dfrac{\alpha}{r} \partial_z v^\theta
+ \partial_z q
= f^z
&\mbox{in}\ (1,\infty)\times\R, 
\end{array}\right.
\end{equation}
where the boundary conditions are omitted for simplicity.

Notice that the equation for the swirl $v^\theta$ does not change from the case $\alpha=0$. 
%
\begin{lemma}\label{app.lem.eq.vol}
Let $\alpha,\gamma\in\R$ and let $f\in C^\infty(\Omega)^3$ be axisymmetric and have no swirl. Suppose that $(v,\nabla q)$ is a smooth solution of \eqref{eq.LP.rot}. Then the component $\omega:=(\oprot v)^\theta$ solves 
\begin{align}\label{eq.app.lem.eq.vol}
\begin{split}
-\Big(
\Delta + \frac{\gamma}{r}\partial_r - \dfrac{1+\gamma}{r^2} 
\Big) \omega
= (\oprot f)^\theta + \dfrac{2\alpha}{r^2} \partial_z v^\theta. 
\end{split}
\end{align}
\end{lemma}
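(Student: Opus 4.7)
The plan is to derive the vorticity equation exactly as in Lemma \ref{lem.eq.vol}, with the new rotation terms in \eqref{eq.LP.rot} contributing a source proportional to $\partial_z v^\theta$. Concretely, I would apply $\partial_z$ to the first equation (the $r$-component) and $\partial_r$ to the third equation (the $z$-component) of \eqref{eq.LP.rot}, then take the difference. The cross-derivative $\partial_r\partial_z q$ of the pressure cancels, leaving an equation whose left-hand side depends only on $\omega=\partial_z v^r -\partial_r v^z$ and $v^\theta$.

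For the terms that are already present when $\alpha=0$, the computation is identical to that in the proof of Lemma \ref{lem.eq.vol}. Using $[\Delta,\partial_z]=0$ and $[\partial_r,\Delta]=-\tfrac{1}{r^2}\partial_r$ on axisymmetric scalars, one finds
$$
-\Bigl(\Delta-\tfrac{1}{r^2}\Bigr)\partial_z v^r + \partial_r\Delta v^z = -\Bigl(\Delta-\tfrac{1}{r^2}\Bigr)\omega,
$$
while the suction-transport term contributes $-\partial_r\bigl(\tfrac{\gamma\omega}{r}\bigr)=-\tfrac{\gamma}{r}\partial_r\omega+\tfrac{\gamma}{r^2}\omega$. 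Combined with $\partial_z f^r -\partial_r f^z =(\oprot f)^\theta$ on the right-hand side, this reproduces the $\alpha=0$ identity.

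The only new step is the bookkeeping of the rotation-induced terms $-\tfrac{\alpha}{r^2}\partial_r(rv^\theta)$ and $-\tfrac{\alpha}{r}\partial_z v^\theta$. Their total contribution after differentiation is
$$
-\tfrac{\alpha}{r^2}\partial_r(r\partial_z v^\theta)+\partial_r\Bigl(\tfrac{\alpha}{r}\partial_z v^\theta\Bigr),
$$
and expanding the Leibniz rule makes the two $\pm\tfrac{\alpha}{r}\partial_r\partial_z v^\theta$ pieces cancel, leaving $-\tfrac{2\alpha}{r^2}\partial_z v^\theta$. Moving this quantity to the right-hand side yields the claimed equation \eqref{eq.app.lem.eq.vol}. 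There is no real obstacle here: the only thing to watch is the consistent tracking of signs and $\tfrac{1}{r}$-weights in the Leibniz expansion, which is the source of the factor $2$ in the final term.
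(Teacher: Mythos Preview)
Your proposal is correct and follows essentially the same approach as the paper: apply $\partial_z$ to the $r$-equation and $\partial_r$ to the $z$-equation, subtract, and simplify the new $\alpha$-terms. The paper records the intermediate identity
\[
-\Big(\Delta + \tfrac{\gamma}{r}\partial_r - \tfrac{1+\gamma}{r^2}\Big)\omega
+ \alpha\Big\{-\tfrac{1}{r^2}\partial_r(r\partial_z v^\theta)+\partial_r\Big(\tfrac{\partial_z v^\theta}{r}\Big)\Big\}=(\oprot f)^\theta,
\]
which is exactly your displayed $\alpha$-contribution before expanding the Leibniz rule.
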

%
%
\begin{proof}
In a similar manner as in the proof of Lemma \ref{lem.eq.vol}, we see that $\omega$ solves
\begin{align*}
\begin{split}
-\Big(
\Delta + \dfrac{\gamma}{r}\partial_r - \dfrac{1+\gamma}{r^2} 
\Big) \omega
+ \alpha 
\Big\{
-\frac{1}{r^2} \partial_r (r\partial_z v^\theta)
+\partial_r\Big(\dfrac{\partial_z v^\theta}{r}\Big)
\Big\}
= (\oprot f)^\theta, 
\end{split}
\end{align*}
which implies the assertion. 
\end{proof}
%

Therefore, by solving the equation for the swirl $v^\theta$ first and by regarding the right-hand side of \eqref{eq.app.lem.eq.vol} as given data, it is possible to reproduce the proofs in Subsection \ref{subsec.rad.vert}. Indeed, one can obtain the estimate in Proposition \ref{prop.LP} but with linear growth in $|\alpha|$, which concludes Proposition \ref{app.prop} below. It is unclear whether the smallness on $|\alpha|$ can be removed.
%
\begin{proposition}\label{app.prop}
Let $\alpha\in\R$ and let $\gamma>2$ and $2<\rho<3$ satisfy $\rho\le\gamma$. If $f\in \opFX(\R,L^\infty_{\rho+1})^3$ and $|\alpha|, \|f\|_{\opFX(L^\infty_{\rho+1})}$ are sufficiently small depending on $\gamma,\rho$, then there is a unique weak solution $v\in \opFX(\R,L^\infty_{\rho-1})^3$ of \eqref{intro.eq.PNP}. Consequently, there is an axisymmetric weak solution $u$ of \eqref{intro.eq.NS} with $b = \alpha {\bf e}_\theta - \gamma {\bf e}_r$ unique in a suitable class satisfying
\begin{align*}
u(r,\theta,z) 
= 
\frac{\alpha}{r} {\bf e}_\theta
- \frac{\gamma}{r} {\bf e}_r
+ O(r^{-\rho+1})
\quad 
\mbox{as}\ \, r\to\infty.
\end{align*}
\end{proposition}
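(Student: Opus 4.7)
The plan is to reduce the rotating case to the suction case by exploiting the fact that the swirl equation in \eqref{eq.LP.rot} is \emph{decoupled} from $(v^r,v^z)$ and, in fact, identical to \eqref{eq.LP2}. Concretely, to prove the linear counterpart of Proposition \ref{prop.LP} for the operator in \eqref{eq.LP.rot}, I would first invoke Proposition \ref{prop.LP2} to produce the unique $v^\theta\in \opFX(\R,L^\infty_{\rho-1})\cap W^{1,2}_{{\rm loc}}(\overline\Omega)$ solving the swirl equation, with
$\|v^\theta\|_{\mathcal{X}_\rho}\lesssim \|f^\theta\|_{\opFX(L^\infty_{\rho+1})}$.
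Since this step does not see $\alpha$ at all, no restriction on $|\alpha|$ arises here.

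Next I would treat $v^\theta$ as known and solve for $(v^r,v^z)$ via the vorticity reduction. By Lemma \ref{app.lem.eq.vol}, the associated $\omega=(\oprot v)^\theta$ satisfies the same radial--vertical vorticity equation as in Section \ref{sec.LP} but with source
$(\oprot f)^\theta+\tfrac{2\alpha}{r^2}\partial_z v^\theta$.
The extra piece has the structure $\partial_z\!\left(\tfrac{2\alpha v^\theta}{r^2}\right)$, and the axisymmetric field $\tfrac{2\alpha v^\theta}{r^2}\,{\bf e}_r$ lies in $\opFX(\R,L^\infty_{\rho+1})^3$ because $v^\theta\in\opFX(\R,L^\infty_{\rho-1})$. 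Thus it enters exactly like an additional no-swirl external force in $\opFX(\R,L^\infty_{\rho+1})^3$, and the construction of $\omega$ via the representation \eqref{rep.omega1}--\eqref{rep.omega4} and the choice of $c[\cdot]$ in \eqref{rep4.proof.lem.vel.L1} proceeds verbatim as in Lemmas \ref{lem.vel.L1}--\ref{lem.vel.seq}. Recovering $(v^r,v^z)$ from $\omega$ via the axisymmetric Biot-Savart law and verifying that $(v^r,v^z)$ solves the modified radial--vertical momentum equations up to a pressure gradient uses Proposition \ref{prop.pot} exactly as in Subsection \ref{subsec.rad.vert}. Bundling the bounds yields the linear estimate
\[
\|v\|_{\mathcal{X}_\rho}\lesssim \|f\|_{\opFX(L^\infty_{\rho+1})}+|\alpha|\,\|v^\theta\|_{\mathcal{X}_\rho}\lesssim (1+|\alpha|)\,\|f\|_{\opFX(L^\infty_{\rho+1})}.
\]

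For the nonlinear problem \eqref{intro.eq.PNP} with $V$ given by \eqref{def.V.rot}, I would then run the Banach fixed-point argument of Section \ref{sec.pr.thm} with the above linear solver replacing that of Proposition \ref{prop.LP}. The quadratic term $-w\cdot\nabla w$ is handled by the Banach algebra property of Lemma \ref{lem.X.FX}\,(\ref{item2.lem.X.FX}) and the inequality $\rho+1\le 2\rho-1$ exactly as before, and the resulting contraction estimate on a ball $\mathcal{B}_\delta\subset\mathcal{X}_\rho$ reads schematically
\[
(1+|\alpha|)(\delta^2+\|f\|_{\opFX(L^\infty_{\rho+1})})\le\delta,\qquad (1+|\alpha|)\,\delta\ll 1,
\]
which can be closed by taking $|\alpha|$ and $\|f\|_{\opFX(L^\infty_{\rho+1})}$ small enough in terms of $\gamma,\rho$. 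The asymptotic profile $u=V+O(r^{-\rho+1})$ then follows from $v\in\opFX(\R,L^\infty_{\rho-1})^3\hookrightarrow \opBUC(\R,L^\infty_{\rho-1})^3$.

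The main obstacle is the one flagged by the author: the swirl and the rotating part of $V$ interact through the extra term $\tfrac{2\alpha}{r^2}\partial_z v^\theta$ in the vorticity equation, and through the term $\tfrac{\alpha}{r^2}\partial_r(rv^\theta)$ in the radial momentum equation. The first contributes linearly to the radial--vertical solver with coefficient $|\alpha|$, so it enters the linear bound as the factor $(1+|\alpha|)$ above; combined with the contraction requirement for the quadratic nonlinearity, this is what forces the explicit smallness assumption on $|\alpha|$. I expect the verification that this additional source fits precisely into the monotonicity-based estimates of Lemma \ref{lem.vel.L1} (and the positivity of $K_\nu$ used through Lemma \ref{lem.est.F}) to be the most delicate bookkeeping, but no new analytical ingredient beyond Section \ref{sec.LP} should be required.
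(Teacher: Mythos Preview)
Your proposal is correct and follows essentially the same route as the paper's outline: solve the unchanged swirl equation first via Proposition~\ref{prop.LP2}, then feed $\tfrac{2\alpha}{r^2}\partial_z v^\theta$ as an additional no-swirl source into the vorticity equation of Lemma~\ref{app.lem.eq.vol} and rerun the machinery of Subsection~\ref{subsec.rad.vert}, obtaining a linear bound with a factor $(1+|\alpha|)$ and closing by Banach fixed point under smallness of $|\alpha|$ and $\|f\|$. The paper merely states this outline in words; your added observation that the extra source is exactly $(\oprot g)^\theta$ for $g=\tfrac{2\alpha v^\theta}{r^2}\,{\bf e}_r\in\opFX(\R,L^\infty_{\rho+1})^3$ is the right way to make the reduction precise.
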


\subsection*{Acknowledgements}
The author is partially supported by JSPS KAKENHI Grant Number JP 20K14345. 

\addcontentsline{toc}{section}{References}
\bibliography{Ref}
\bibliographystyle{plain}

\medskip

\begin{flushleft}
M. Higaki\\
Department of Mathematics, 
Graduate School of Science, 
Kobe University, 
1-1 Rokkodai, 
Nada-ku, 
Kobe 657-8501, 
Japan.
Email: higaki@math.kobe-u.ac.jp
\end{flushleft}

\medskip

\noindent \today

\end{document}